\documentclass[11pt,reqno]{amsart}

\usepackage{amsmath,amsthm,amssymb,mathtools}
\usepackage{enumitem}
\usepackage{xcolor}
\usepackage{graphicx}
\graphicspath{{figures/}}
\usepackage[colorlinks,citecolor=magenta,linkcolor=blue,urlcolor=black]{hyperref}

\usepackage[T1]{fontenc}
\allowdisplaybreaks

\numberwithin{equation}{section}

\theoremstyle{definition}
\newtheorem{thm}{Theorem}[section]
\newtheorem{cor}[thm]{Corollary}
\newtheorem{lem}[thm]{Lemma}
\newtheorem{prop}[thm]{Proposition}
\newtheorem{defn}[thm]{Definition}
\newtheorem{eg}[thm]{Example}
\newtheorem{rem}[thm]{Remark}
\newtheorem{conj}[thm]{Conjecture}

\newcommand{\Hmod}[1]{\widetilde{\mathsf H}_{#1}}
\newcommand{\hrow}[1]{\widetilde{\mathsf h}_{#1}}
\newcommand{\qq}[1]{(q;q)_{#1}}
\newcommand{\Th}{\Theta}
\newcommand{\LT}{\mathrm{LT}}

\newcommand{\unit}[1]{\mathbf 1_{#1}}
\newcommand{\Pop}{\boldsymbol{\Pi}}
\newcommand{\qbin}[2]{\genfrac[]{0pt}{}{#1}{#2}_q}

\newcommand{\LLT}{\mathcal{G}}
\newcommand{\bfsdeg}{\mathrm{bdeg}}
\newcommand{\OP}{\mathcal{OP}}
\newcommand{\words}[1]{\mathcal{W}(#1)}
\newcommand{\rwords}[2]{\mathcal{W}(#2,#1)}
\newcommand{\cont}{\operatorname{cont}}
\newcommand{\wt}{\operatorname{wt}}

\title[Theta operators at $t=1$ and Macdonald cumulants]
{Theta operators at $t=1$, Macdonald cumulants, and LLT positivity}

\author{Jim Haglund}
\address{Department of Mathematics,
University of Pennsylvania, Philadelphia, PA 19104, USA}
\email{\href{mailto:jim.haglund@upenn.edu}{jim.haglund@upenn.edu}}

\author{Vasu Tewari}
\address{Department of Mathematical and Computational Sciences, University of Toronto Mississauga, Mississauga, ON L5L 1C6, Canada}
\email{\href{mailto:vasu.tewari@utoronto.ca}{vasu.tewari@utoronto.ca}}

\thanks{
VT acknowledges the support of an NSERC Discovery Grant (RGPIN-2024-05433).}

\begin{document}

\begin{abstract}
We study the Theta operators of D'Adderio--Iraci--Vanden Wyngaerd at $t=1$
and show that the power-sum-indexed operators $\Th_{\mathsf p_k}|_{t=1}$ agree with a commuting family of
derivations when restricted to symmetric functions of positive degree.
Writing $\hrow a$ for the modified Macdonald function indexed by the single
row $(a)$ we establish that $\Th_{\mathsf p_\mu}\hrow a|_{t=1}$ is, up to a
normalization, the single-row Macdonald cumulant of Do\l{}\k{e}ga.
We use these results to show that $\Th_{\mathsf p_\mu}\hrow a|_{t=1}$ and
$\Th_{\mathsf e_\lambda}\hrow a|_{t=1}$ are both sums of vertical-strip LLT
polynomials indexed by certain plane trees. The former further shows that
single-row Macdonald cumulants are LLT-positive, thereby yielding a stronger
form of the higher-order Macdonald positivity conjecture of Do\l{}\k{e}ga
when all shapes are single rows.
\end{abstract}

\maketitle


\section{Introduction}\label{sec:intro}

Let $\Lambda$ be the ring of symmetric functions over $\mathbb Q(q,t)$ and
let $\{\Hmod\mu(X;q,t)\}$ be the modified
Macdonald basis \cite[(2.40)]{Hag08} as $\mu$ ranges over all partitions. 
Its Schur coefficients
\[
\Hmod\mu=\sum_\lambda \widetilde K_{\lambda\mu}(q,t)\,\mathsf s_\lambda
\]
are the $q,t$-Kostka polynomials, and \emph{Macdonald positivity} is the
statement that $\widetilde K_{\lambda\mu}(q,t)\in\mathbb Z_{\ge0}[q,t]$.
This was conjectured by Macdonald \cite{Mac88}, \cite[Ch.~VI, \S8]{Mac} and proved
by Haiman \cite{Hai01} by way of the isospectral Hilbert scheme. 
The same basis carries the operator theory of the subject. The nabla operator
$\nabla$ of Bergeron, Garsia, Haiman and Tesler \cite{BGHT} is diagonal on
this basis. Its value $\nabla\mathsf e_n$ is the bigraded Frobenius characteristic of the
diagonal coinvariant ring \cite[Prop.~3.5]{HaimanDH}, and the Shuffle theorem
expresses it as a sum over parking functions \cite{HHLRU,CM}.

Do\l{}\k{e}ga \cite{Dol17} has posed a question strictly more general than
Macdonald positivity. Cumulants of Jack polynomials arose in the work of
Do\l{}\k{e}ga and F\'eray \cite{DF17} on the $b$-conjecture of Goulden and
Jackson \cite{GJ96}. Do\l{}\k{e}ga subsequently carried the construction
to Macdonald polynomials. To each tuple of partitions $\lambda^1,\dots,\lambda^r$
he associates a \emph{Macdonald cumulant}
$\kappa(\lambda^1,\dots,\lambda^r)\in\Lambda$, defined by a normalized
alternating sum of products of modified Macdonald polynomials.
Its Schur coefficients are the \emph{multivariate $q,t$-Kostka polynomials} and lie in
$\mathbb Z[q,t]$ \cite[Thm.~1.5]{Dol17}.

\begin{conj}[{\cite[Conj.~1.6]{Dol17}, \cite[Conj.~1.2]{DolG}}]\label{conj:dolega}
For all partitions $\lambda^1,\dots,\lambda^r$, the Macdonald cumulant
$\kappa(\lambda^1,\dots,\lambda^r)$ is Schur positive, i.e. its Schur
coefficients lie in $\mathbb Z_{\ge0}[q,t]$.
\end{conj}

For $r=1$ Conjecture~\ref{conj:dolega} is exactly Haiman's theorem. Several combinatorial
results support the conjecture. Do\l{}\k{e}ga \cite{DolG} expressed Macdonald
cumulants in terms of $G$-parking functions, obtaining monomial and
quasisymmetric positivity and the Schur coefficients indexed by hooks
\cite[Thm.~6.1]{DolG}. Do\l{}\k{e}ga and Kowalski \cite[Thms.~10--11]{DKLLT}
reduced the conjecture to Schur positivity of their \emph{LLT cumulants},
built from the polynomials of Lascoux, Leclerc and Thibon \cite{LLT}, and
asked whether an LLT cumulant is always a nonnegative combination of 
LLT polynomials.
The answer is yes when every
$\lambda^i=(1)$ \cite{Kow20}, \cite[Thm.~40]{DKLLT}.
Since the $G$-inversion polynomials include the area generating function of parking functions,
Do\l{}\k{e}ga \cite[\S8.2]{DolG} asks for a link between these cumulants
and the Shuffle theorem and Delta conjecture.

Our first result proves a stronger form of Conjecture~\ref{conj:dolega} when every
$\lambda^i$ is a single row.
For arbitrary row lengths we expand these cumulants
explicitly in vertical-strip LLT polynomials with coefficients in
$\mathbb Z_{\ge0}[q]$.
This combinatorial expansion is coarser than
the Schur expansion.
Note that since $\Hmod{(a)}$ does not involve
$t$, neither do these cumulants.

Somewhat unexpectedly, our route goes through another family of operators on
Macdonald polynomials: the \emph{Theta operators} $\Th_f$ of
D'Adderio, Iraci and Vanden Wyngaerd \cite{DIV}.
These were introduced to formulate a compositional refinement of the Delta
conjecture of Haglund, Remmel and Wilson \cite{HRW} in the spirit of \cite{HMZ}.
See \cite{DM,BHMPS} for recent progress in this context.
Our main structural result identifies the power-sum Theta operators at $t=1$
with commuting derivations on symmetric functions of positive degree.
From it we deduce that $\Th_{\mathsf p_\mu}\hrow a|_{t=1}$ is a single-row
Macdonald cumulant up to an explicit $q$-factor for every single row $(a)$
and partition $\mu$; see \eqref{eq:introKarow}.
For single rows this is a link of the kind asked for in \cite[\S8.2]{DolG}.
A second expansion treats the Theta operators indexed by
elementary symmetric functions.  We now describe these results in detail.

\subsection{Theta operators at \texorpdfstring{$t=1$}{t=1}}
For $a\ge0$ write $\hrow a\coloneqq\Hmod{(a)}$ for the modified Macdonald
functions indexed by a single row. These lie in $\Lambda_{\mathbb Q(q)}$ and their products 
$\hrow\lambda\coloneqq\hrow{\lambda_1}\cdots\hrow{\lambda_\ell}$ form a
basis of $\Lambda_{\mathbb Q(q)}$. For $f\in\Lambda$
we define the Theta operator on all of $\Lambda$ by
\[
\Th_f\;\coloneqq\;\Pop\,f[X/M]\,\Pop^{-1},
\qquad M=(1-q)(1-t),
\]
where $\Pop$ is the operator diagonal in the modified Macdonald basis defined in
\S\ref{ssec:symfn}.
Theta operators are linear and multiplicative in their subscript.

In the subject of Macdonald polynomials, positivity in $q$ and $t$ is the recurring difficulty, and
$t=1$ is the tractable specialization. 
For $\nabla$ the reason is immediate:
at $t=1$ both its eigenvectors $\Hmod\mu$ and its eigenvalues factor over the rows of $\mu$ so $\nabla|_{t=1}$
is multiplicative \cite[Thm.~2.1]{BGHT}. Thus studying the  specialized
operator reduces to its values on the $\hrow a$. Nothing of the sort is
available for the Theta operators. They are not diagonal and both the plethysm
$f[X/M]$ and conjugation by $\Pop^{-1}$ can produce poles at $t=1$.
The specialization therefore rests on cancellation of poles and the defining
conjugation obscures the resulting operator.
Our starting point is that a derivation law provides a substitute for multiplicativity.
Since the $\hrow m$ for $m\ge1$ generate $\Lambda_{\mathbb Q(q)}$ freely,
a derivation of this ring is determined by its values on them.

\begin{thm}\label{thm:introDm}
On symmetric functions of positive degree and for each $k\ge1$ we have
\[
\Th_{\mathsf p_k}\big|_{t=1}=D_k,
\qquad\text{where}\qquad
D_k\hrow m=\frac{\hrow{m+k}-\hrow m\hrow k}{q^k-1}\quad(m\ge1)
\]
defines a derivation $D_k$ of $\Lambda_{\mathbb Q(q)}$. The derivations
$D_k$ pairwise commute.
\end{thm}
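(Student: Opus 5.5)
The plan is to establish the formula for $\Th_{\mathsf p_k}|_{t=1}$ on each basis element $\hrow\lambda$ with $|\lambda|>0$ by a perturbative expansion around $t=1$, then read off the derivation law, and finally deduce commutativity.

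First, $D_k$ is well defined as a derivation. The $\hrow m$ with $m\ge1$ freely generate $\Lambda_{\mathbb Q(q)}$, so any assignment of values on them extends uniquely to a derivation. The content of the theorem is therefore that $\Th_{\mathsf p_k}|_{t=1}$ exists on positive degree and agrees with this $D_k$.

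Second, I would compute $\Th_{\mathsf p_k}$ on the modified Macdonald basis at generic $t$. Write $\Pop\Hmod\mu=\Pi_\mu\Hmod\mu$, where $\Pi_\mu=\prod_{c\in\mu,\,c\ne(0,0)}(1-q^{a'(c)}t^{l'(c)})$. Expand the multiplication operator
\[
\mathsf p_k[X/M]=\frac{\mathsf p_k}{(1-q^k)(1-t^k)}
\]
by the Pieri rule $\mathsf p_k\Hmod\mu=\sum_{\nu\supseteq\mu}c^{(k)}_{\mu\nu}\Hmod\nu$. This gives
\[
\Th_{\mathsf p_k}\Hmod\mu=\frac{1}{(1-q^k)(1-t^k)}\sum_\nu c^{(k)}_{\mu\nu}\frac{\Pi_\nu}{\Pi_\mu}\Hmod\nu .
\]
Set $\varepsilon=1-t$. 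The key observation is that $\Pi_\mu$ vanishes to order exactly $\ell(\mu)-1$ in $\varepsilon$: every row $i\ge2$ contributes the factor $1-t^{i-1}$ from its first cell. The leading coefficient factors over the rows, as does $\Hmod\mu|_{t=1}=\hrow\mu$. Hence
\begin{itemize}
\item terms with $\ell(\nu)=\ell(\mu)+1$ carry an extra factor $\varepsilon$, which cancels the pole $1/(1-t^k)\sim 1/(k\varepsilon)$;
\item terms with $\ell(\nu)=\ell(\mu)$ a priori contribute a $1/\varepsilon$ pole, which must cancel in the sum.
\end{itemize}
Rather than working on $\Hmod\mu$, whose $t\to1$ limit is $\hrow\mu$ only to leading order, I would work directly with $\Pop^{-1}$ applied to the products $\hrow\lambda$. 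Concretely, I would expand $\hrow\lambda$ to first order in $\varepsilon$ in the $\Hmod\mu$ basis (the $\varepsilon$-corrections are needed precisely because of the pole), and check that the combined $\varepsilon^{-1}$ terms cancel. I expect this cancellation, together with identifying the finite remainder, to be the main obstacle. What I would try first is the single-row case $\lambda=(m)$. There $\Pi_{(m)}=\qq{m-1}$ has no zero, and only $\nu$ with at most two rows occur: adding $k$ cells to row one gives $\hrow{m+k}$, and a new second row of length $k$ gives $\hrow m\hrow k$. The factor $1/(q^k-1)$ should emerge from $1/(1-q^k)$ and the Pieri coefficients at $t=1$. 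I would confirm this case explicitly.

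Third, for general $\lambda$ I would prove the Leibniz rule $\Th_{\mathsf p_k}|_{t=1}(FG)=(\Th_{\mathsf p_k}F)G+F(\Th_{\mathsf p_k}G)$ for $F,G$ of positive degree. The heuristic is that the leading-order factorization of $\Pi_\mu$ over rows makes $\Pop$ behave, up to the power $\varepsilon^{\ell-1}$, like a product of row operators. Meanwhile $\mathsf p_k[X/M]$ contributes one pole $1/\varepsilon$, so exactly one factor can be ``hit''. This is the mechanism producing a derivation. Multiplying by $\mathsf p_k$ acts as a primitive element, extending one row or creating one new row, and $\mathsf p_k$ is primitive under the coproduct, $\Delta\mathsf p_k=\mathsf p_k\otimes1+1\otimes\mathsf p_k$. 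This gives the sum-over-factors structure, with the vanishing order of $\Pi$ ensuring that cross terms are killed at $t=1$. The positive-degree hypothesis enters because on constants $\Th_{\mathsf p_k}1=\mathsf p_k[X/M]$ has a genuine pole.

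Finally, commutativity follows because $\mathsf p_j[X/M]$ and $\mathsf p_k[X/M]$ commute as multiplication operators, so $\Th_{\mathsf p_j}\Th_{\mathsf p_k}=\Th_{\mathsf p_j\mathsf p_k}=\Th_{\mathsf p_k}\Th_{\mathsf p_j}$. Each operator preserves positive degree, so this identity specializes to $D_jD_k=D_kD_j$ on positive degree. On constants both derivations vanish. As a sanity check, one can also verify $D_jD_k\hrow m=D_kD_j\hrow m$ directly from the defining formula.
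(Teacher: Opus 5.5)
Your setup is correct: the Pieri expansion of $\Th_{\mathsf p_k}\Hmod\mu$ and the vanishing of $\Pi_\mu$ to order $\ell(\mu)-1$ at $t=1$. But the proposal stops where the proof has to happen, and it misplaces the difficulty. Poles cannot cancel ``in the sum'' over $\nu$, since these are coefficients in a basis. In fact there is nothing to cancel: every entry $[\Hmod\nu]\,\Th_{\mathsf p_k}\Hmod\mu$ with $|\mu|\ge1$ is regular at $t=1$. The transition matrices to the row basis are regular with value the identity, so your first-order corrections to $\hrow\lambda$ in the $\Hmod$ basis play no role. For $\ell(\nu)=\ell(\mu)$, regularity holds because $c^{(k)}_{\mu\nu}$ itself vanishes at $t=1$: at $t=1$, multiplication by $\mathsf p_k$ only appends rows to $\hrow\mu$. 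The real obstacle is that the value of such a term is $\lim_{t\to1}c^{(k)}_{\mu\nu}/\bigl((1-q^k)(1-t^k)\bigr)$ times a $t=1$ quantity. It therefore depends on $\partial_t c^{(k)}_{\mu\nu}$ at $t=1$, which no $t=1$ information supplies. The terms with $\ell(\nu)=\ell(\mu)+1$ do come from $t=1$ Pieri data and give the $-\ell(\lambda)\hrow{\lambda\cup(k)}$ part. The ``raise a part by $k$'' terms do not. Your single-row test case already exhibits this. The coefficient of $\hrow{m+k}=\Hmod{(m+k)}$ in $\Th_{\mathsf p_k}\hrow m$ comes from $\nu=(m+k)$, where $\Pi_{(m+k)}/\Pi_{(m)}=\qq{m+k-1}/\qq{m-1}$ has no zero while $c^{(k)}_{(m),(m+k)}(q,1)=[\hrow{m+k}]\,\mathsf p_k\hrow m=0$. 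So the value $1/(q^k-1)$ cannot emerge from $t=1$ Pieri coefficients as you propose.

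The missing idea is the star-adjointness $\langle\mathsf p_k^\ast f,g\rangle_\ast=(-1)^{k-1}\langle f,\mathsf p_k^\perp g\rangle_\ast$. It gives $c^{(k)}_{\mu\nu}=(-1)^{k-1}(1-q^k)(1-t^k)\frac{w_\mu}{w_\nu}[\Hmod\mu]\,\mathsf p_k^\perp\Hmod\nu$, which is exactly the first-order information you lack. Hence
\[
[\Hmod\nu]\,\Th_{\mathsf p_k}\Hmod\mu=(-1)^{k-1}\,\frac{\Pi_\nu}{w_\nu}\cdot\frac{w_\mu}{\Pi_\mu}\,[\Hmod\mu]\,\mathsf p_k^\perp\Hmod\nu .
\]
Since $w_\mu/\Pi_\mu$ has a simple zero at $t=1$ with an explicit leading coefficient, and $\mathsf p_k^\perp$ is $t$-free with $\mathsf p_k^\perp\hrow n=\qq{k-1}\qbin nk\hrow{n-k}$ plus the Leibniz rule, every entry is regular with an explicit value. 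One reads off $\Th_{\mathsf p_k}\hrow\lambda|_{t=1}=\frac{1}{q^k-1}\bigl(\sum_{i=1}^{\ell(\lambda)}\hrow{\lambda+k\unit i}-\ell(\lambda)\hrow{\lambda\cup(k)}\bigr)$ for every $\lambda$ at once. This formula is additive in $\lambda$, so it equals $D_k\hrow\lambda$, and the derivation law needs no separate argument. Your third step, by contrast, is only a heuristic and would not go through as stated. $\Th_{\mathsf p_k}$ is not a derivation at generic $t$ (its defect on $\mathsf e_1^2$ is $-(1-q)(1-t)\mathsf s_{111}$), so primitivity of $\mathsf p_k$ proves nothing by itself. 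Also, the leading coefficient $(\ell-1)!\prod_j\qq{\mu_j-1}$ of $\Pi_\mu$ is not a product of row factors; the ratio $\ell!/(\ell-1)!$ is what produces the coefficient $-\ell(\lambda)$. Your commutativity argument is fine once regularity of the operator matrices on positive degree is established.
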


The case $k=1$ follows from the computation in \cite[Lem.~5.3]{DILRVW}, but the operators for $k\ge2$ and the derivation law are new. 
For $f\in\Lambda_{\mathbb Q(q)}$ Theorem~\ref{thm:introDm} makes
$\Th_f|_{t=1}$ a polynomial in the $D_k$ on positive-degree symmetric functions.

\subsection{Macdonald cumulants at single rows}
Applied to $\hrow a$, composites of the derivations $D_k$ produce cumulants.
Theorem~\ref{thm:Karow} and Proposition~\ref{prop:spec} give for every partition
$\mu=(\mu_1,\dots,\mu_r)$ and $a\ge1$
\begin{equation}\label{eq:introKarow}
\Th_{\mathsf p_\mu}\hrow a\big|_{t=1}
=\frac{\kappa(\mu_1,\dots,\mu_r,a)}{\prod_i (1+q+\cdots+q^{\mu_i-1})}.
\end{equation}
The
cumulant is symmetric in $\mu_1,\dots,\mu_r,a$ while the denominator contains the parts of $\mu$ alone.

We expand \eqref{eq:introKarow} in vertical-strip LLT polynomials $\LLT(T)$ indexed here by plane
rooted trees $T$. These polynomials are Schur positive \cite[Prop.~5.3.1]{HHLRU}. We write $\bfsdeg(T)$
for the sequence recording the number of children of the non-leaf vertices of $T$ in
breadth-first order.

\begin{thm}
\label{thm:mainLLT}
For every partition $\mu=(\mu_1,\dots,\mu_r)$ and every $a\ge1$, we have
\[
\Th_{\mathsf p_\mu}\hrow a\big|_{t=1}
=\sum_{(B_1,\dots,B_s)}
  \ \sum_{\bfsdeg(T)=(a,\mu(B_1),\dots,\mu(B_s))}
  \LLT(T),
\]
where $(B_1,\dots,B_s)$ ranges over the ordered set partitions of $[r]\coloneqq\{1,\dots,r\}$ and
$\mu(B)=\sum_{i\in B}\mu_i$. 
\end{thm}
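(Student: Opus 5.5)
By Theorem~\ref{thm:introDm}, the left-hand side equals $D_{\mu_1}\cdots D_{\mu_r}\hrow a$ on positive degree. The plan is therefore to compute iterated derivations of a single row and match the result against a tree expansion.

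The first step is the base case $r=1$. Here $D_k\hrow a=(\hrow{a+k}-\hrow a\hrow k)/(q^k-1)$, and I want to show that this is $\sum_{\bfsdeg(T)=(a,k)}\LLT(T)$. A plane tree with $\bfsdeg=(a,k)$ has a root with $a$ children, exactly one of which carries $k$ children. Using $\hrow m=\sum_{\sigma}q^{\mathrm{inv}}\cdots$, the one-row LLT expansion (a single vertical column, or $\hrow m$ as an LLT of $m$ single cells with $q$ counting inversions), I would interpret $\LLT(T)$ for a tree as the LLT polynomial of a tuple of vertical strips whose contents are fixed by depth in the tree, with attacking pairs dictated by breadth-first order. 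The identity $(q^k-1)\sum_T\LLT(T)=\hrow{a+k}-\hrow a\hrow k$ should then follow from a telescoping argument. Moving the distinguished child's block of $k$ children past successive positions changes the LLT by a factor of $q$ per attacking relation. This is a one-dimensional LLT ``insertion'' computation, and I would carry it out via the standard linear relations for unicellular/vertical-strip LLTs.

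The second step is the inductive structure. Since $D_k$ is a derivation and the $D_k$ commute, $D_{\mu_r}(D_{\mu_1}\cdots D_{\mu_{r-1}}\hrow a)$ is obtained by applying $D_{\mu_r}$ to each factor. I would define, on the span of tree LLTs, an operator that sends $\LLT(T)$ to a sum over ways of either (i) attaching $\mu_r$ new children to a new leaf-turned-internal vertex appended at the end of the breadth-first order, which creates a new block $B=\{r\}$, or (ii) merging $\mu_r$ extra children into the last-created internal vertex of some existing block, which adds $r$ to that block. The ordered set partitions arise because a block's position in the breadth-first sequence records when it was opened. The key lemma to prove is that $D_k\LLT(T)$ equals the sum of these modified trees, so that $D_k$ acts on tree LLTs by a local rule. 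To prove it, I would write $\LLT(T)$ as a polynomial in the $\hrow m$. Then $D_k$ acting via Leibniz on each $\hrow m$-factor produces differences $\hrow{m+k}-\hrow m\hrow k$, and one reassembles these using the base-case identity locally at each internal vertex.

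The main obstacle is this local lemma. It requires expressing a tree LLT in a form where $D_k$ can act, either as a product-like expansion in the $\hrow m$ or through a recursive ``last vertex'' decomposition. It also requires showing that the $1/(q^k-1)$ cancels to give positive terms matching exactly the combinatorial growth rule. I would first try to prove a recursion of the form
\[
(q^{c}-1)\,\LLT(T)=\LLT(T^{+})-\LLT(T')\,\hrow c,
\]
where $c$ is the number of children of the last internal vertex in breadth-first order, $T^{+}$ merges those children into the previous vertex, and $T'$ deletes them. This is a generalization of the base case. Applying it repeatedly identifies $D_k$ with the combinatorial rule. Finally, summing over the order in which the $\mu_i$ are applied and collecting trees by their blocks gives the stated sum over ordered set partitions, with each tree appearing exactly once.
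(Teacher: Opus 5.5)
\textbf{There is a genuine gap.} It sits in the step you flag as the main obstacle, and the growth rule you want to prove is also not the right one.

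\textbf{The per-tree recursion is false.} The recursion $(q^{c}-1)\LLT(T)=\LLT(T^{+})-\LLT(T')\,\hrow c$ does not hold for individual trees, and it already contradicts your own base case. Take the two trees with $\bfsdeg(T)=(2,1)$, in which the first or the second child of the root carries one child. Both have the same $T'$ (root with two leaves, LLT $\hrow2$) and the same $T^{+}$ (root with three leaves, LLT $\hrow3$). Your recursion would therefore force each of them to equal $(\hrow3-\hrow2\hrow1)/(q-1)$. Your base case says instead that their \emph{sum} equals this; in fact they are $q\mathsf s_{21}+q^2\mathsf s_{111}$ and $\mathsf s_{21}+q\mathsf s_{111}$.

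The true identity is the paper's block-insertion identity $\LLT(T_0)=(q^b-1)\sum_{j=1}^{L}\LLT(T_j)+\LLT(T)\,\hrow b$. It is summed over all $L$ leaves after the last non-leaf of $T$, and it is proved from the first-letter refinement of the $q$-multinomial (Lemma~\ref{lem:firstletter}). This is why the paper works with $\Gamma_\alpha=\sum_{\bfsdeg(T)=\alpha}\LLT(T)$ rather than with single trees; these sums satisfy $(q^b-1)\Gamma_{\alpha\cdot(b)}=\Gamma_{\alpha\odot(b)}-\Gamma_\alpha\hrow b$. Nothing in your plan shows that $D_k$ acts on a single $\LLT(T)$ by a local positive rule. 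Your suggestion to expand $\LLT(T)$ in the $\hrow m$ and ``reassemble'' has no valid identity to reassemble with.

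\textbf{The growth rule is also wrong.} $D_k$ does not only append a new block at the end of the BFS order. Proposition~\ref{prop:insertion} gives $D_k\Gamma_\alpha=\sum_{\gamma\in Q_k(\alpha)}\Gamma_\gamma$. Here the new part $k$ may be inserted after \emph{any} of $\alpha_0,\dots,\alpha_s$, or added to any $\alpha_j$ with $j\ge1$.

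For example, $D_1\mathsf e_2=D_1\Gamma_{(1,1)}=\Gamma_{(1,2)}+2\Gamma_{(1,1,1)}=\mathsf s_{21}+(q+2)\mathsf s_{111}$. The unique tree with BFS word $(1,1,1)$ occurs twice. Your append-or-merge rule gives only $\Gamma_{(1,2)}+\Gamma_{(1,1,1)}=\mathsf s_{21}+(q+1)\mathsf s_{111}$.

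Summing over the order in which the $\mu_i$ are applied does not repair this. $D_{\mu_1}\cdots D_{\mu_r}$ is a single composite, and summing over orders would, for instance, count the merged block (the term $\Gamma_{(1,2)}$ above) twice. Ordering blocks by when they were opened produces unordered set partitions. It is the interior insertions that produce \emph{ordered} ones: adjoining $r+1$ to one of the $s$ blocks, or inserting $\{r+1\}$ as a singleton in one of the $s+1$ positions, is a bijection onto $\OP_{r+1}$.

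\textbf{The missing algebraic input} is the identity
$\Gamma_{\alpha\odot(b)\cdot(k)}=q^b\Gamma_{\alpha\cdot(b+k)}+(q^b-1)\Gamma_{\alpha\cdot(b)\cdot(k)}+\Gamma_\alpha\Gamma_{(b,k)}$.
It comes from expanding $\Gamma_{\alpha\odot(b)}\hrow k$ in two ways. Together with $D_k$ applied to the defining recursion of $\Gamma$, it proves $D_k\Gamma_\alpha=\mathsf R_k(\alpha)$ by induction on the number of parts.
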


As a corollary we obtain the following.

\begin{cor}\label{cor:introdolega}
For all positive integers $\nu_1,\dots,\nu_r$ the cumulant $\kappa(\nu_1,\dots,\nu_r)$ is a
$\mathbb Z_{\ge0}[q]$-linear combination of vertical-strip LLT polynomials and hence Schur positive.
\end{cor}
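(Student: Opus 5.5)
Corollary~\ref{cor:introdolega} follows by combining \eqref{eq:introKarow} with Theorem~\ref{thm:mainLLT}, using the symmetry of the cumulant in its arguments. Given positive integers $\nu_1,\dots,\nu_r$, we may reorder them freely, since $\kappa$ is symmetric in its arguments. Choose one argument to play the role of $a$; any choice works, but for definiteness take $a=\nu_r$. Let $\mu$ be the partition obtained by sorting $\nu_1,\dots,\nu_{r-1}$ into weakly decreasing order. The case $r=1$ is degenerate: then $\mu=\varnothing$, and $\kappa(a)=\hrow a$ is itself the LLT polynomial of the tree with a root and $a$ leaves, or we may simply cite Haiman. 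Multiplying \eqref{eq:introKarow} through by the denominator gives
\[
\kappa(\nu_1,\dots,\nu_r)=\prod_{i=1}^{r-1}(1+q+\cdots+q^{\mu_i-1})\;\Th_{\mathsf p_\mu}\hrow a\big|_{t=1}.
\]

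Next substitute the expansion of Theorem~\ref{thm:mainLLT} for $\Th_{\mathsf p_\mu}\hrow a|_{t=1}$. That theorem writes it as a sum, with coefficient $1$, of terms $\LLT(T)$ over plane trees $T$, indexed by ordered set partitions and breadth-first degree sequences. These index sets are finite, because the total number of non-leaf children is fixed. The prefactor $\prod_i[\mu_i]_q$ lies in $\mathbb Z_{\ge0}[q]$. Hence $\kappa(\nu_1,\dots,\nu_r)$ is a $\mathbb Z_{\ge0}[q]$-linear combination of vertical-strip LLT polynomials $\LLT(T)$.

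Schur positivity then follows from \cite[Prop.~5.3.1]{HHLRU}, which says that each $\LLT(T)$ has Schur coefficients in $\mathbb Z_{\ge0}[q]$. The class of Schur positive functions is closed under multiplication by elements of $\mathbb Z_{\ge0}[q]$ and under finite sums. Note also that $\kappa$ does not involve $t$, so there is no issue with specializing.

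There is no real obstacle here; the substance lies entirely in \eqref{eq:introKarow} and Theorem~\ref{thm:mainLLT}. The one point to check is the normalization convention for $\kappa$. The $q$-factor in \eqref{eq:introKarow} must be exactly $\prod_i[\mu_i]_q$, with no stray sign or power of $(q-1)$, so that clearing it keeps the coefficients nonnegative. This is what Proposition~\ref{prop:spec} is cited for.
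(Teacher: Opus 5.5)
Your argument is correct and is essentially the paper's proof: you sort all but one argument into $\mu$, apply the second identity of Theorem~\ref{thm:Karow}, and expand via Theorem~\ref{thm:LLTseeded}. Two small points of attribution: the exact factor $\prod_i[\mu_i]_q$ and the cancellation of the powers of $q-1$ come from Theorem~\ref{thm:Karow} rather than Proposition~\ref{prop:spec}, and the paper never needs to pass through $\Th_{\mathsf p_\mu}\hrow a|_{t=1}$ at all, since $\mathsf K^{(a)}_\mu=D_{\mu_1}\cdots D_{\mu_r}\hrow a$ is proved directly in \eqref{eq:seeded} (your detour via \eqref{eq:introKarow} is harmless).
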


\subsection{The elementary expansion}
\begin{thm}
\label{thm:mainlltrow}
For every $a\ge1$ and every partition $\lambda\vdash n$, we have
\[
\Upsilon^{(a)}_\lambda\coloneqq\Th_{\mathsf e_\lambda}\hrow a\big|_{t=1}
=\sum_{S\in\LT^{(a)}_\lambda}\LLT(S),
\]
a sum over \emph{sibling-increasing labeled $a$-trees}
of \emph{content} $\lambda$.
\end{thm}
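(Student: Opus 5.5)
The plan is to reduce Theorem~\ref{thm:mainlltrow} to Theorem~\ref{thm:mainLLT} by expanding $\mathsf e_\lambda$ in power sums, and then to reorganize the resulting signed sum of tree LLT polynomials into a positive one. Since $\Th$ is linear and multiplicative in its subscript, Theorem~\ref{thm:introDm} gives $\Th_{\mathsf e_n}|_{t=1}=E_n(D_1,D_2,\dots)$ on positive-degree functions, where $E_n$ is $\mathsf e_n$ written as a polynomial in the power sums. Equivalently,
\[
\sum_{n\ge0}z^n\,\Th_{\mathsf e_n}\big|_{t=1}=\exp\Bigl(\sum_{k\ge1}\frac{(-1)^{k-1}z^kD_k}{k}\Bigr),
\]
which makes sense because the $D_k$ commute. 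Then $\Upsilon^{(a)}_\lambda=\prod_i\Th_{\mathsf e_{\lambda_i}}\hrow a|_{t=1}$ is an exponential-type combination of the sums $\Th_{\mathsf p_\mu}\hrow a|_{t=1}$. For each of these, Theorem~\ref{thm:mainLLT} supplies an expansion over plane trees with root degree $a$ and the remaining breadth-first degrees given by block sums of ordered set partitions of the parts of $\mu$.

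It is cleaner to work directly with the derivations rather than through $\mu$. Set $\mathcal D(z)=\sum_k(-1)^{k-1}z^kD_k/k$, so that $\exp(\mathcal D(z))$ is a ring automorphism of $\Lambda_{\mathbb Q(q)}[[z]]$, being the exponential of a derivation. It is therefore determined by its values on the generators $\hrow m$. First I would compute $\exp(\mathcal D(z))\hrow m$ in closed form from the formula $D_k\hrow m=(\hrow{m+k}-\hrow m\hrow k)/(q^k-1)$. I expect a $q$-binomial or plethystic identity for $\hrow m$ here, using $\sum_m \hrow m u^m=\exp(\sum_k \mathsf p_k u^k/(k(1-q^k)))$-type generating functions. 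This should exhibit $\exp(\mathcal D)\hrow m$ as a sum over ways of attaching, below a vertex of degree $m$, children whose degrees are the parts of an $\mathsf e$-type content.

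Multiplying over the $\lambda_i$ corresponds to the several variables $z_i$, i.e.\ labels $i$. Iterating the automorphism on the new $\hrow{}$-factors it creates builds the tree level by level in breadth-first order. That matches $\bfsdeg$, with sibling labels increasing because of the $\mathsf e$ (strict) structure. Comparing with the tree model underlying Theorem~\ref{thm:mainLLT} then identifies each resulting term with $\LLT(S)$ for a sibling-increasing labeled $a$-tree $S$ of content $\lambda$.

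The main obstacle is the sign cancellation. The power-sum expansion of $\mathsf e_\lambda$ has signs and $1/z_\mu$ denominators, while the target is a positive sum. I would first try a sign-reversing involution on pairs consisting of a permutation of cycle type $\mu$ and a tree with ordered set partition data. An alternative is to prove the $\mathsf e$-analogue of the one-step recursion $D_k\hrow m$ directly, namely an expression for $\Th_{\mathsf e_n}$ on $\hrow m$ as a positive combination of $\hrow{m+j}\,\cdot(\text{products})$, and then feed this into the same LLT recursion used for Theorem~\ref{thm:mainLLT} (the vertical-strip LLT recurrence for grafting children). I would verify the base case $\lambda=(1)$ against $D_1\hrow a$ and the case $\lambda=(2)$ by hand before committing to the general bijection.
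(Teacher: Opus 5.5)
Your setup is sound. The product $\prod_i\exp(\mathcal D(z_i))$ is $\Th_{\mathsf E[XY]}|_{t=1}$ with $Y=z_1+z_2+\cdots$, which is the generating series of Proposition~\ref{prop:master}, and Theorem~\ref{thm:mainLLT} is the right input. (Here $\Upsilon^{(a)}_\lambda$ is the composite $\Th_{\mathsf e_{\lambda_1}}\cdots\Th_{\mathsf e_{\lambda_\ell}}$ applied to $\hrow a$, not a product.)

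However, there is a genuine gap. The step you call the main obstacle is turning the signed combination $\sum_\mu\varepsilon_\mu z_\mu^{-1}\mathsf p_\mu(Y)\,\Th_{\mathsf p_\mu}\hrow a|_{t=1}$ into a positive tree sum. For this you offer only candidates: an involution you do not construct, a closed form for $\exp(\mathcal D(z))\hrow m$ you only expect, or an unspecified $\mathsf e$-analogue of the recursion. So the argument is missing exactly its crux.

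The closed-form route also fails as described. The values $\exp(\mathcal D(z))\hrow m$ are positive in the $\Gamma$ or LLT basis but not in the row basis; already $D_1\hrow1=(\hrow2-\hrow1^2)/(q-1)$ has a negative row coefficient. So iterating the automorphism on ``the new $\hrow{}$-factors'' brings back the signs and denominators you want to avoid. Your picture of labels building the tree level by level is also off: with a single variable $z$, the coefficient of $z^n$ is a sum over $m$-trees whose chains have arbitrary depth (every active vertex has at most one child).

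The missing idea is that no cancellation argument involving trees is needed. Exchange the order of summation and group by tree. Let $\bfsdeg(T)^\flat=\alpha=(\alpha_1,\dots,\alpha_s)$ be the BFS degree word with the root entry deleted. By Theorem~\ref{thm:mainLLT}, such a tree $T$ occurs in $\Th_{\mathsf p_\mu}\hrow a|_{t=1}$ with multiplicity
\[
R_{\mu\alpha}=\#\{(B_1,\dots,B_s)\in\OP_r:\mu(B_j)=\alpha_j\}.
\]
By \cite[Prop.~7.7.1]{StanEC2} this is $\langle\mathsf p_\mu,\mathsf h_\alpha\rangle$. Hence $\mathsf h_\alpha=\sum_\mu z_\mu^{-1}R_{\mu\alpha}\mathsf p_\mu$, and applying $\omega$ gives
\[
\sum_\mu\varepsilon_\mu z_\mu^{-1}\mathsf p_\mu(Y)\,\Th_{\mathsf p_\mu}\hrow a\big|_{t=1}
=\sum_T\Bigl(\sum_\mu\varepsilon_\mu z_\mu^{-1}R_{\mu,\bfsdeg(T)^\flat}\,\mathsf p_\mu(Y)\Bigr)\LLT(T)
=\sum_T\mathsf e_{\bfsdeg(T)^\flat}(Y)\,\LLT(T).
\]
Each factor $\mathsf e_{\bfsdeg(T)^\flat}(Y)=\prod_j\mathsf e_{d_j}(Y)$ is the generating function of sibling-increasing labelings of $T$. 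It contributes one strictly increasing label sequence on the children of each non-leaf active vertex, with the component roots unlabeled. By the dual Cauchy identity, specialized via Proposition~\ref{prop:spec}, the left side equals $\sum_\lambda\mathsf m_\lambda(Y)\Upsilon^{(a)}_\lambda$. Extracting the coefficient of $\mathbf y^\lambda$ finishes the proof.

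Any involution you might build would only re-prove, for each fixed $T$, the classical identity $\mathsf e_\alpha=\sum_\mu\varepsilon_\mu z_\mu^{-1}R_{\mu\alpha}\mathsf p_\mu$. The trees play no role in the cancellation.
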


Theorem~\ref{thm:mainlltrow} should be compared with two earlier positive
expansions at $t=1$. The work of \cite{DILRVW} treats the special case $a=1$,
with a proved monomial formula for $\lambda=1^n$ \cite[Thm.~4.5]{DILRVW}
and a conjectural one for general $\lambda$.
Theorem~\ref{thm:mainlltrow} does not recover that particular monomial formula.
It would be interesting to reconcile the two expansions combinatorially.
During the preparation of this work D'Adderio, Interdonato, Iraci and Pagaria
\cite{DIIP} obtained explicit formulas for Negu\c{t} operators. Iterating
\cite[Thm.~5.14, Cor.~5.16]{DIIP} writes $\Th_{\mathsf e_\lambda}\mathsf e_1$
as a nonnegative integral combination of values of Negu\c{t} operators at $1$.
Their $\mathsf e$-positive expansions at $q=1$ in \cite[Thm.~4.29]{DIIP}
combine with the $q\leftrightarrow t$ symmetry to give an $\mathsf e$-positive expansion of
$\Upsilon^{(1)}_\lambda$.
Neither of these aforementioned works reaches $a\ge2$ whereas
Theorem~\ref{thm:mainlltrow} treats every $a\ge1$ uniformly.

\subsection{Outline}

Section~\ref{sec:prelim} fixes algebraic and combinatorial conventions.
Section~\ref{sec:seed} identifies
$\Th_{\mathsf p_k}|_{t=1}$ with a derivation $D_k$ and derives
the resulting Hopf-algebraic product rule.
Section~\ref{sec:gamma} introduces an apparently new family $\Gamma_\alpha$ of
symmetric functions indexed by compositions and computes the action of the $D_k$ on it.\footnote{Remark~\ref{rem:Enk} relates this family to the compositional shuffle theorem at $t=1$.}
The tree expansion of $\Gamma_\alpha$ is the subject of Section~\ref{sec:gammatree}.
Section~\ref{sec:expansions} combines
these two facts with the specialization of Section~\ref{sec:seed} to prove the two LLT expansions.
It also gives the expansions for other subscripts.
Section~\ref{sec:rowcum} completes the connection to Macdonald cumulants.

\subsection*{Acknowledgements}
VT is very grateful to Mike Zabrocki for several discussions involving LLT polynomials, which in turn guided the expansions obtained in Sections~\ref{sec:gamma}--\ref{sec:expansions}.
Claude Opus 4.8 and 5 were used to write Sage code for computational verification and experimenting, IPE code for generating figures, and
for language editing. 
\section{Preliminaries and conventions}\label{sec:prelim}

We begin with a quick recall of some standard combinatorial conventions.
We write $[n]=\{1,\dots,n\}$ for $n\ge0$ (so $[0]=\varnothing$) and set
\[
[k]_q = 1+q+\cdots+q^{k-1},\qquad
(q;q)_k=\prod_{i=1}^k(1-q^i),\qquad
\qbin{n}{k}=\frac{(q;q)_n}{(q;q)_k\,(q;q)_{n-k}}
\]
for the $q$-integer, the $q$-Pochhammer symbol (with $(q;q)_0=1$), and the
$q$-binomial coefficient, the last for $0\le k\le n$ and zero otherwise.


\subsection{Compositions and partitions}
A \emph{weak composition} $\alpha=(\alpha_1,\dots,\alpha_m)$ is a finite sequence of nonnegative integers, its \emph{parts}. 
We write $|\alpha|$ for its \emph{size}, the sum $\alpha_1+\cdots+\alpha_m$,
and $\unit i$ for the $i$-th coordinate vector, so that $\alpha+k\unit i$
raises the $i$-th part of $\alpha$ by $k$ and $\alpha-k\unit i$ lowers it by $k$.
If $\alpha$ has no parts equal to $0$, we call it a  \emph{composition}. If $\alpha$ has size $n$, we write it as $\alpha\vDash n$. 
For compositions $\alpha=(\alpha_1,\dots,\alpha_r)$ and
$\beta=(\beta_1,\dots,\beta_s)$ we write
\[
\alpha\cdot\beta=(\alpha_1,\dots,\alpha_r,\beta_1,\dots,\beta_s),
\qquad
\alpha\odot\beta=(\alpha_1,\dots,\alpha_{r-1},\alpha_r+\beta_1,\beta_2,\dots,\beta_s)
\]
for their \emph{concatenation} and \emph{near-concatenation}. These operations govern the
recursion of Section~\ref{sec:gamma}.

A \emph{partition} $\lambda=(\lambda_1,\dots,\lambda_{\ell})$ is a composition satisfying $\lambda_1\ge \cdots \ge\lambda_{\ell}$.
If $\lambda$ has size $n$, we write it as $\lambda\vdash n$.
We further write $m_i(\lambda)$, abbreviated
$m_i$, for the number of parts of $\lambda$ equal to $i$, and let $
\ell(\lambda) = \sum_i m_i$ denote the \emph{length} of $\lambda$. Finally we set $
z_\lambda \coloneqq \prod_i\bigl(i^{m_i}\,m_i!\bigr)$, a quantity routinely encountered in the theory of symmetric functions. 
Two further statistics of $\lambda$ recur throughout, each a product over
its cells $c$, whose arm, leg, coarm and coleg are written
$a(c),l(c),a'(c),l'(c)$:
\begin{gather*}
\Pi_\lambda = \prod_{c\ne(0,0)}\big(1-q^{a'(c)}t^{l'(c)}\big),\qquad
w_\lambda = \prod_{c}\big(q^{a(c)}-t^{l(c)+1}\big)\big(t^{l(c)}-q^{a(c)+1}\big).
\end{gather*}

\subsection{Words}\label{ssec:words}
A \emph{word} is a finite sequence $\mathbf w=(w_1,\dots,w_n)$ of positive
integers, its \emph{letters}. We write
$\mathbf x^{\mathbf w}=x_{w_1}\cdots x_{w_n}$ for its monomial and
$\mathrm{coinv}(\mathbf w)=\#\{i<j:w_i<w_j\}$ for its \emph{coinversion
number}. The \emph{content} of $\mathbf w$ is the weak composition
$\cont(\mathbf w)=(\beta_1,\beta_2,\dots)$ in which $\beta_i$ counts the
letters of $\mathbf w$ equal to $i$, trailing zeros being immaterial; thus
$|\cont(\mathbf w)|=n$.

Now fix a weak composition $\beta$. We write
$\mathbf x^{\beta}=\prod_ix_i^{\beta_i}$, so that
$\mathbf x^{\mathbf w}=\mathbf x^{\beta}$ for every word $\mathbf w$ of
content $\beta$. We let $\words{\beta}$ denote the set of such words, and
for $r\geq 0$ we let $\rwords{r}{\beta}\subseteq\words{\beta}$ be the subset
of words whose first letter $w_1$ strictly exceeds $r$. The generating
functions for coinversions over these sets are
\[
C(\beta)=\sum_{\mathbf w\in\words{\beta}}q^{\mathrm{coinv}(\mathbf w)},
\qquad
C(\beta,r)=\sum_{\mathbf w\in\rwords{r}{\beta}}q^{\mathrm{coinv}(\mathbf w)}.
\]
Note that if $|\beta|=0$ then $\words{\beta}$ consists of the empty word
alone, so $C(\beta)=1$, whereas $\rwords{r}{\beta}=\varnothing$ and
$C(\beta,r)=0$.
These two generating functions are related by the following identity,
which will prove very useful in Section~\ref{sec:gammatree}.

\begin{lem}\label{lem:firstletter}
For every weak composition $\beta$ and every $r\ge0$, we have
\[
\bigl(q^{\sum_{i>r}\beta_i}-1\bigr)\,C(\beta)
=\bigl(q^{|\beta|}-1\bigr)\,C(\beta,r).
\]
\end{lem}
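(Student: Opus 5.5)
We argue directly from the multinomial formula. Let $n=|\beta|$ and $m=\sum_{i>r}\beta_i$, so $n-m=\sum_{i\le r}\beta_i$. The claim is equivalent, when $n\ge1$, to
\[
\frac{C(\beta,r)}{C(\beta)}=\frac{1-q^{m}}{1-q^{n}}.
\]
When $n=0$ both sides of the lemma vanish, since $C(\beta,r)=0$ and $q^0-1=0$. Likewise, if $m=0$ there are no words in $\rwords{r}{\beta}$, so both sides vanish. We may therefore assume $n\ge1$.

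The first step is to record the standard fact that $C(\beta)$ is the $q$-multinomial coefficient:
\[
C(\beta)=\frac{\qq{n}}{\prod_i\qq{\beta_i}}.
\]
Coinversions are equidistributed with inversions on rearrangements of a multiset (reverse the word), and the inversion generating function is MacMahon's $q$-multinomial. We take this as known.

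The second step is to decompose $C(\beta,r)$ according to the first letter. Fix $j$ with $\beta_j\ge1$ and consider words with $w_1=j$. The first letter creates a coinversion with each later letter larger than $j$, which contributes a factor $q^{\sum_{i>j}\beta_i}$. The remainder of the word ranges over $\words{\beta-\unit j}$. Hence
\[
C(\beta,r)=\sum_{j>r,\ \beta_j\ge1} q^{\sum_{i>j}\beta_i}\,C(\beta-\unit j).
\]
Substituting the multinomial formula gives
\[
\frac{C(\beta-\unit j)}{C(\beta)}=\frac{1-q^{\beta_j}}{1-q^{n}}.
\]
Therefore
\[
\frac{C(\beta,r)}{C(\beta)}=\frac{1}{1-q^n}\sum_{j>r} q^{\sum_{i>j}\beta_i}\bigl(1-q^{\beta_j}\bigr).
\]
The terms with $\beta_j=0$ contribute zero, so the sum may run over all $j>r$.

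The last step is to observe that this sum telescopes. Set $s_j=\sum_{i>j}\beta_i$. Then $q^{s_j}(1-q^{\beta_j})=q^{s_j}-q^{s_{j-1}}$, and summing over $j>r$ gives $1-q^{s_r}=1-q^{m}$, because $s_j=0$ for large $j$. Clearing the denominator yields
\[
(1-q^m)\,C(\beta)=(1-q^n)\,C(\beta,r),
\]
which is the lemma after multiplying both sides by $-1$.

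The only step that needs care is the first-letter decomposition. With coinversions defined as pairs $i<j$ with $w_i<w_j$, the first letter $j$ pairs with every later letter exceeding $j$, so the exponent is indeed $\sum_{i>j}\beta_i$.
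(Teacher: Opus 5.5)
Your proposal is correct and follows essentially the same route as the paper: decompose $C(\beta,r)$ by the first letter, use MacMahon's $q$-multinomial formula to get $(1-q^{|\beta|})\,C(\beta-\unit j)=(1-q^{\beta_j})\,C(\beta)$, and telescope the resulting sum to $1-q^{m}$. The separate treatment of $m=0$ is harmless but unnecessary, since the telescoping identity already covers that case.
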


\begin{proof}
Put $b=|\beta|$ and $m=\sum_{i>r}\beta_i$. If $b=0$ both sides vanish, so
let $b\ge1$. Classifying the words in $\rwords{r}{\beta}$ by their first
letter, which forms a coinversion with each later letter exceeding it, gives
\[
C(\beta,r)=\sum_{i>r,\ \beta_i\ge1}q^{\sum_{j>i}\beta_j}\,C(\beta-\unit i).
\]
By MacMahon's theorem
\cite[Prop.~1.7.1]{StanEC1} we have that $C(\beta)$ is the $q$-multinomial coefficient
$\qq b/\prod_i\qq{\beta_i}$, so that
$(1-q^b)\,C(\beta-\unit i)=(1-q^{\beta_i})\,C(\beta)$ for $\beta_i\ge1$.
Hence
\[
(1-q^b)\,C(\beta,r)
=C(\beta)\sum_{i>r}\bigl(q^{\sum_{j>i}\beta_j}-q^{\sum_{j\ge i}\beta_j}\bigr)
=(1-q^{m})\,C(\beta).\qedhere
\]
\end{proof}


\subsection{Plane trees}\label{ssec:trees}

A \emph{plane rooted tree} is a finite rooted tree in which the children of
each vertex are linearly ordered. The number of children of a vertex is its
\emph{out-degree}, and non-root vertices are \emph{active}. The \emph{depth}
of a vertex is the number of edges on the unique path from the root to it, so
that the root has depth $0$. Throughout this section and
Sections~\ref{sec:gamma}--\ref{sec:expansions} the root is inert: it carries no data
and enters every statement below only through its out-degree. 

\begin{defn}\label{def:atree}
Let $a\ge1$. An \emph{$a$-tree} is a plane rooted tree whose root has exactly $a$ children. Equivalently it is an ordered $a$-tuple
of plane rooted trees, its \emph{components}.
\end{defn}

We use \emph{depth-first search} (DFS) and \emph{breadth-first search} (BFS).
DFS visits the child subtrees from left to right and completes each subtree
before moving to the next. BFS visits vertices in increasing order of depth
and from left to right at each depth.

The \emph{BFS degree word} $\bfsdeg(T)$ records the
out-degrees of the non-leaf vertices in BFS order. The root is a non-leaf, so
its out-degree is the first entry.
Suppose now that $T$ has $n$ active vertices, listed in DFS order as
$v_1,\dots,v_n$, the root not being among them, and set
$\mathrm{depth}_i\coloneqq \mathrm{depth}(v_i)$.
The resulting sequence $(\mathrm{depth}_1,\dots,\mathrm{depth}_n)$ satisfies
$\mathrm{depth}_{i+1}\le\mathrm{depth}_i+1$, with equality exactly when
$v_{i+1}$ is the leftmost child of $v_i$. The maximal segments of this sequence along
which the depth rises by exactly one at each step are the \emph{vertical runs}
of $T$.
Figure~\ref{fig:peel} shows a tree $T$ whose labels give the DFS
order: DFS visits
$\mathrm{root},v_1,v_2,v_3,v_4,v_5,v_6$, whereas BFS visits
$\mathrm{root},v_1,v_5,v_2,v_4,v_6,v_3$. Consequently,
$\bfsdeg(T)=(2,2,1,1)$, and the vertical runs are
$(v_1,v_2,v_3)$, $(v_4)$, and $(v_5,v_6)$.

\begin{figure}[ht]
\centering
\includegraphics{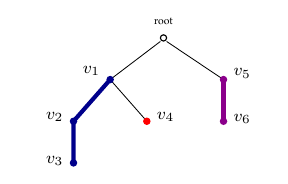}
\caption{A plane rooted tree with active vertices in DFS order and vertical runs in bold.}
\label{fig:peel}
\end{figure}

\subsection{Symmetric functions and Macdonald polynomials}\label{ssec:symfn}
Recall from the introduction that  $\Lambda$ is the ring of symmetric functions over $\mathbb Q(q,t)$.
A subscript on \(\Lambda\) always denotes a coefficient ring and is omitted when it is clear.
We write \(\Lambda^{(n)}\) for the homogeneous component of degree \(n\) and
\(\Lambda^+=\bigoplus_{n\ge1}\Lambda^{(n)}\) for the positive-degree part.
Our
results lie in \(\Lambda_{\mathbb Q(q)}\), and unless the variable \(t\) is
displayed, every identity is understood in this ring. Intermediate quantities
are rational in \(t\) and do have poles at \(t=1\), so \(t=1\) is substituted
only in combinations shown to be regular there.
An \emph{alphabet} $A$ is a formal sum of monomials in the ambient variables
(which include $q$ and $t$).
Symmetric functions in an auxiliary alphabet
\(A\) are written \(\Lambda(A)\), with the default alphabet always equal to $X=x_1+x_2+\cdots$.

Given an alphabet $A$, plethystic substitution is the $\mathbb
Q$-algebra map $f\mapsto f[A]$ determined by the rule that
$\mathsf p_r[A]$ is the sum of the $r$-th powers of the monomials of $A$, taken
with multiplicity. Reading $
M=1-q-t+qt$ 
as an alphabet gives $
\mathsf p_r[MX] = (1-q^r)(1-t^r)\,\mathsf p_r[X]$,
so that
\[
f^\ast\coloneqq f[X/M]
\qquad\text{is the substitution}\qquad
\mathsf p_r\longmapsto\frac{\mathsf p_r}{(1-q^r)(1-t^r)}.
\]
The \emph{Hall inner product} on $\Lambda$ is defined by declaring $
\langle \mathsf p_\lambda,\mathsf p_\mu\rangle = \delta_{\lambda\mu}\,z_\lambda$.
The involution $\omega$ is given by $\omega\,\mathsf p_\lambda=\varepsilon_\lambda\,\mathsf p_\lambda$
where $
\varepsilon_\lambda\;\coloneqq\;(-1)^{|\lambda|-\ell(\lambda)}$.
Define the \emph{star pairing} by
\[
\langle f,g\rangle_* \coloneqq \langle f,\ \omega\, g[MX]\rangle.
\]
This pairing is diagonal on power sums.

The modified Macdonald basis is orthogonal for this pairing \cite{GHT}:
$\langle\Hmod\lambda,\Hmod\mu\rangle_* = \delta_{\lambda\mu}\,w_\mu$.
For every homogeneous $f\in \Lambda$ of degree $n$ this gives
\begin{equation}\label{eq:starorth}
f = \sum_{\mu\vdash n}\langle f,\Hmod\mu\rangle_*\,\frac{\Hmod\mu}{w_\mu}.
\end{equation}

\begin{defn}\label{def:theta_again}
Let $\Pop$ denote the linear operator on $\Lambda$ determined by  $
    \Pop\, \Hmod{\lambda}=\Pi_{\lambda}\Hmod{\lambda}.
$
For $f\in\Lambda$ the Theta operator is
$\Th_f=\Pop\,f^\ast\,\Pop^{-1}$. We call $f$ the \emph{subscript} of $\Th_f$.
\end{defn}
 Since
$f^\ast g^\ast=(fg)^\ast$, we have that $f\mapsto\Th_f$ is linear and
\emph{multiplicative}: $
\Th_{fg}=\Th_f\Th_g$ for $f,g\in\Lambda
$.

\begin{rem}\label{rem:unitconv}
The published definition \cite[(28)]{DIV} and the references \cite{DR,IR}
set $\Th_fg=0$ when $\deg g=0<\deg f$.
For $n\ge1$ our convention gives $
\Th_{\mathsf e_n}(1)=\Pop\,\mathsf e_n^\ast
=\frac{(-1)^{n-1}\mathsf p_n}{M\,[n]_q\,[n]_t}$,
whereas it is $0$ in theirs. The two conventions agree on $\Lambda^+$,
which is where our specialization results apply. In our convention these results do not extend to constants since $\Th_{\mathsf e_n}(1)$ has a simple pole at $t=1$. Theorem~\ref{thm:Dm} holds on all of $\Lambda$ in the convention of \cite{DIV} because both sides vanish on constants.
\end{rem}

\subsection{The vertical-strip LLT polynomial}\label{ssec:llt}
We introduce vertical-strip LLT polynomials using plane rooted trees,
and relate this indexing to tuples of vertical strips in
Remark~\ref{rem:normalization}.

\begin{defn}\label{def:coloring}
Let $v_1,\dots,v_n$ be the active vertices of $T$ in DFS order. A
\emph{coloring} of $T$ is a word $\mathbf c=(c_1,\dots,c_n)$ of
positive integers, one color $c_i$ for each active vertex $v_i$. It is
\emph{valid} if it satisfies the \emph{strict-rise condition}:
$
\mathrm{depth}_{i+1}=\mathrm{depth}_i+1
\quad\Longrightarrow\quad
c_i<c_{i+1}
\text{ for } 1\le i<n.$
We write $\mathrm{Col}(T)$ for the set of valid colorings of $T$.
\end{defn}
Equivalently, $\mathbf c\in \mathrm{Col}(T)$ when its colors strictly increase along
every vertical run of $T$. The root is not colored.
We are now ready to introduce the vertical-strip
LLT polynomial \cite{LLT}.

\begin{defn}\label{def:llt}
Let $\mathbf c$ be a coloring of $T$. For $1\leq i<j\leq n$, the pair
$(i,j)$ is \emph{primary} if
$\mathrm{depth}_i=\mathrm{depth}_j$ and $c_i<c_j$, and \emph{secondary} if
$\mathrm{depth}_i=\mathrm{depth}_j+1$ and $c_i>c_j$. 
The \emph{diagonal
inversion number} $\mathrm{dinv}(\mathbf c)$ is the number of pairs of either
kind, and the \emph{weight} of a valid coloring $\mathbf c$ is
$\wt(\mathbf c)\coloneqq q^{\mathrm{dinv}(\mathbf c)}\mathbf x^{\mathbf c}$.
The \emph{LLT polynomial} of $T$ is the sum of the weights of its valid
colorings:
\[
\LLT(T)\;\coloneqq\;\sum_{\mathbf c\in \mathrm{Col}(T)}\wt(\mathbf c).
\]
\end{defn}

This distinguished family is Schur positive
\cite[Prop.~5.3.1]{HHLRU}. This rests on the identification of LLT products of
partition shapes with parabolic Kazhdan--Lusztig polynomials by Leclerc and
Thibon \cite{LT}, and on the positivity of the latter due to Kashiwara and
Tanisaki \cite{KT}.

Figure~\ref{fig:dinv} shows a valid coloring of a tree whose vertical runs in DFS order are
\[
R_1=(v_1,v_2,v_3),\qquad R_2=(v_4),\qquad
R_3=(v_5,v_6,v_7),\qquad R_4=(v_8).
\]
The numbers in the circles are the colors. Thus
$\mathbf c=(2,3,4,1,1,3,4,3)$ and
$\mathbf x^{\mathbf c}=x_1^2x_2x_3^3x_4^2$.
The right-hand diagram displays the associated tuple of vertical strips.

\begin{figure}[ht]
\centering
\includegraphics{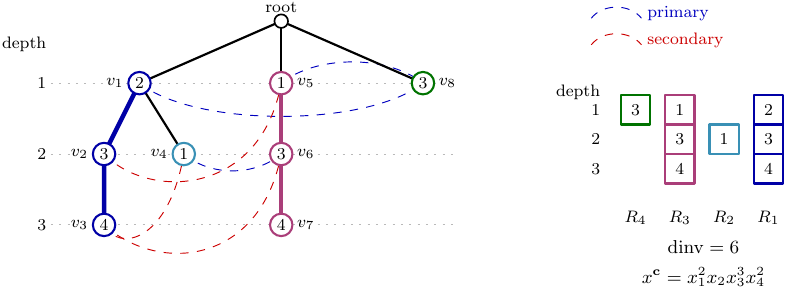}
\caption{A valid coloring of a second tree with its primary and secondary pairs.
}
\label{fig:dinv}
\end{figure}

\begin{rem}\label{rem:normalization}
Encode the DFS tour by a word in $\{U,D\}$, writing $U$ for an edge traversed
away from the root and $D$ for an edge traversed toward it. The resulting
word is a Dyck word, and its maximal $U$-runs are precisely the vertical runs
of $T$. Taking those runs in reverse order and giving each cell its
depth as content, the primary and secondary pairs of Definition~\ref{def:llt}
are exactly the Bylund--Haiman inversion pairs of the resulting tuple of
vertical strips. Thus Definition~\ref{def:llt} is the normalization of
\cite[(6.8) and Rem.~6.5]{Hag08}, and Figure~\ref{fig:dinv} records one
coloring both ways. Carlsson and Mellit \cite{CM} order $\mathbb Z_{>0}$
oppositely, so their columns decrease and their
$\mathrm{dinv}$ inequalities are reversed. Let $\pi$ be the DFS Dyck path
of $T$ and let $\pi'$ be its image under the path bijection in
\cite[\S2.4]{CM}. Then $\LLT(T)=\chi(\pi',0)$ by \cite[Rem.~3.6]{CM},
while their $\chi(\pi')$ of \cite[Def.~3.1]{CM} is the corresponding unicellular
polynomial. Kowalski \cite[\S2.2]{Kow20} employs the same passage from a tree
to a tuple of vertical strips.
\end{rem}



\subsection{Auxiliary results}
We now record a few results that shall prove helpful in the sequel, beginning with an adjointness identity.
Write $f^\perp$ for the Hall
adjoint of multiplication by $f$.
We then have (cf. \cite[(1.32)]{DR})
\[
\langle \mathsf p_k^\ast f,\,g\rangle_\ast
=(-1)^{k-1}\langle f,\,\mathsf p_k^\perp g\rangle_\ast
\qquad(f,g\in\Lambda).
\]
The operator $\Pop$ is diagonal in the star-orthogonal Macdonald basis
and hence star-self-adjoint, so for \(g\in\Lambda^+\) and every partition
\(\nu\) we obtain
\begin{equation}\label{eq:adjcoef}
[\Hmod\nu]\,\Th_{\mathsf p_k}g
=(-1)^{k-1}\,\frac{\Pi_\nu}{w_\nu}\,
\big\langle \Pop^{-1}g,\ \mathsf p_k^\perp\Hmod\nu\big\rangle_\ast .
\end{equation}

The convention of \S\ref{ssec:symfn} permits the substitution $t=1$ only in
combinations that are regular there. We now define that notion.
Since $\Lambda_{\mathbb Q(q,t)}
=\Lambda_{\mathbb Q(q)}\otimes_{\mathbb Q(q)}\mathbb Q(q,t)$, every basis of
$\Lambda_{\mathbb Q(q)}$ is a basis of $\Lambda_{\mathbb Q(q,t)}$ over
$\mathbb Q(q,t)$. Call $f\in\Lambda_{\mathbb Q(q,t)}$ \emph{regular at $t=1$}
if its coefficients in one such basis have no pole at
$t=1$ when regarded
as rational functions of $t$ over $\mathbb Q(q)$. Evaluating these
coefficients at $t=1$ defines the specialization
$f|_{t=1}\in\Lambda_{\mathbb Q(q)}$. Neither the regularity nor the specialization depends on the basis since any two such bases are related by an invertible matrix over $\mathbb Q(q)$.

The modified Macdonald polynomials are regular at $t=1$: their coefficients
in the Schur basis lie in $\mathbb Q[q,t]$, this being the polynomiality of
the $q,t$-Kostka coefficients (Garsia--Tesler \cite{GT}, Knop \cite{Knop},
Sahi \cite{Sahi}, Kirillov--Noumi \cite{KN}). 
We may therefore specialize $\Hmod\mu$ at $t=1$, and doing so produces a
basis of $\Lambda_{\mathbb Q(q)}$. Indeed,
\begin{equation}\label{eq:factort1}
\Hmod\mu(X;q,1) \;=\; \prod_i\hrow{\mu_i}(X;q),
\qquad\text{where}\qquad
\hrow k(X;q)\;\coloneqq\;\qq{k}\,\mathsf h_{k}\big[X/(1-q)\big];
\end{equation}
see \cite[(2.4)]{BGHT}, attributed there to
\cite[Ch.~VI, p.~364, Ex.~7]{Mac}. The products
$
\hrow\gamma \;\coloneqq\; \prod_i\hrow{\gamma_i}
$
over partitions form a basis of $\Lambda_{\mathbb Q(q)}$, the \emph{row
basis}. 
We use the same notation for an arbitrary finite sequence $\gamma$ of
nonnegative integers, and note that $\hrow0=1$.

Being multiplicative, the row basis needs a combinatorial description only
for the $\hrow k$.  One has
\begin{equation}\label{eq:rowcoinv}
\hrow k \;=\; \sum_{\mathbf w}q^{\mathrm{coinv}(\mathbf w)}\mathbf x^{\mathbf w},
\end{equation}
the sum over all words $\mathbf w$ of length $k$.
It is \eqref{eq:rowcoinv} that
drives the tree-indexed LLT expansions of
Sections~\ref{sec:gammatree} and~\ref{sec:expansions}.

These facts are used below without further reference. We shall also need
the converse direction of \eqref{eq:factort1}, since our arguments expand
elements of the row basis in the Macdonald basis and only then let $t\to1$.

\begin{lem}\label{lem:row-transition}
For $n\ge1$, the transition matrices between $\{\Hmod\mu\}_{\mu\vdash n}$ and
$\{\hrow\mu\}_{\mu\vdash n}$ are regular at $t=1$, and both specialize there
to the identity matrix.
\end{lem}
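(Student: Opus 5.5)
Let $A=(A_{\lambda\mu})$ be the matrix expressing the Macdonald basis in the row basis:
\[
\Hmod\mu=\sum_{\lambda\vdash n}A_{\lambda\mu}(q,t)\,\hrow\lambda .
\]
The inverse transition matrix is then $A^{-1}$. The plan has two steps. First, show that the entries of $A$ are regular at $t=1$ and that $A|_{t=1}$ is the identity. Second, deduce the same for $A^{-1}$ via the determinant.

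For the first step, the row basis $\{\hrow\lambda\}_{\lambda\vdash n}$ is a basis of $\Lambda^{(n)}_{\mathbb Q(q)}$, hence also of $\Lambda^{(n)}_{\mathbb Q(q,t)}$. Fix the Schur basis and let $R(q)$ be the matrix expressing $\hrow\lambda$ in Schur functions. Its entries lie in $\mathbb Q(q)$ and do not involve $t$, and it is invertible over $\mathbb Q(q)$. Let $K(q,t)$ be the matrix of $q,t$-Kostka coefficients $\widetilde K_{\lambda\mu}(q,t)$. Its entries lie in $\mathbb Q[q,t]$ by the polynomiality recalled above, so they are regular at $t=1$. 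We then have $A=R(q)^{-1}K(q,t)$, and every entry of $A$ is a $\mathbb Q(q)$-combination of polynomials in $t$, hence regular at $t=1$. Specializing gives $A|_{t=1}=R^{-1}K(q,1)$. By \eqref{eq:factort1}, the columns of $K(q,1)$ are the Schur expansions of $\hrow\mu$, so $K(q,1)=R$ and $A|_{t=1}=I$.

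For the second step, $\det A$ is regular at $t=1$ with value $\det I=1$. So $\det A$ is a nonzero rational function of $t$ over $\mathbb Q(q)$ with no pole and no zero at $t=1$. This nonvanishing also confirms that $A$ is invertible over $\mathbb Q(q,t)$, which we already know because $\{\Hmod\mu\}$ is a basis. By Cramer's rule, $A^{-1}=\operatorname{adj}(A)/\det A$. The adjugate has polynomial entries in the entries of $A$, so it is regular at $t=1$. Dividing by $\det A$, which is a unit in the local ring of $\mathbb Q(q)(t)$ at $t=1$, keeps every entry regular. Specialization is a ring homomorphism on that local ring, so it commutes with inversion and $A^{-1}|_{t=1}=(A|_{t=1})^{-1}=I$.

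There is no serious obstacle. The only point needing care is to ensure that regularity is tested in a basis defined over $\mathbb Q(q)$, as the definition requires. Using the Schur basis together with $R(q)$, which is $t$-free, handles this. The remaining step is the local-ring argument for the inverse.
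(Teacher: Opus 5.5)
Your proof is correct and is essentially the paper's argument. You write the transition matrix as $R(q)^{-1}K(q,t)$, then use polynomiality of the $q,t$-Kostka coefficients and the factorization \eqref{eq:factort1} to get regularity with value the identity, and finally pass to the inverse. The only difference is that you justify the inverse step explicitly, via the adjugate and the fact that $\det A$ is a unit in the local ring at $t=1$; the paper compresses this into ``hence so is its inverse.''
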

\begin{proof}
The Schur expansion of $\{\Hmod\mu\}$ specializes at $t=1$ to that of
$\{\hrow\mu\}$, which is invertible with entries in $\mathbb Q(q)$. The matrix expressing
$\{\Hmod\mu\}$ in $\{\hrow\mu\}$ is therefore regular at $t=1$ with value
the identity, and hence so is its inverse.
\end{proof}

The $\Hmod\mu$ do not lie in $\Lambda_{\mathbb Q(q)}$, so expansions in them
are governed by Lemma~\ref{lem:row-transition}. The following two consequences of the
definition are used throughout.
\begin{enumerate}[label=\textup{(R\arabic*)},ref=\textup{(R\arabic*)}]
\item\label{it:Rring} Sums and products of elements regular at $t=1$, and of
matrices whose entries are regular at $t=1$, are again regular, and
specializing at $t=1$ commutes with both operations.
\item\label{it:Rrow} The matrix of a linear map between homogeneous
components is regular at $t=1$ in the Macdonald bases if and only if it is
regular in the row bases, and the two specializations agree. This is
Lemma~\ref{lem:row-transition} together with \ref{it:Rring}, the transition
matrices being regular with value the identity.
\end{enumerate}

\section{Specializing the Theta operator at \texorpdfstring{$t=1$}{t=1}}\label{sec:seed}


The dual Cauchy identity lets us record the families
$\{\Th_{\mathsf e_\lambda}g\}$ and $\{\Th_{\mathsf p_\mu}g\}$ with $g\in\Lambda^+$
in a single generating series over an auxiliary alphabet $Y$.
Two commuting alphabets $X,Y$ give $\Lambda(X)\otimes\Lambda(Y)$ and we write
$\widehat{\Lambda(X)\otimes\Lambda(Y)}$ for its completion by $Y$-degree.
In this completion set $\mathsf E[XY]\coloneqq\sum_{k\ge0}\mathsf e_k[XY]$.
The first equality below is the dual Cauchy identity and the second
expands $\mathsf E[XY]$ in power sums:
\begin{equation}\label{eq:EXY}
\mathsf E[XY]
\;=\; \sum_\lambda \mathsf e_\lambda(X)\,\mathsf m_\lambda(Y)
\;=\;\sum_{\mu}\varepsilon_\mu z_\mu^{-1}\,\mathsf p_\mu(X)\,\mathsf p_\mu(Y).
\end{equation}
Here $\mathsf m_\lambda$ is the monomial symmetric function and both sums
run over all partitions.
The operator $\Th$ acts only through the $X$-factor.

\begin{prop}\label{prop:master}
For every $g\in\Lambda^+$ we have in
$\widehat{\Lambda(X)\otimes\Lambda(Y)}$
\[
\sum_{\lambda}\mathsf m_\lambda(Y)\;\Th_{\mathsf e_\lambda}g
\;=\;\Th_{\mathsf E[XY]}\,g
\;=\;\sum_{\mu}\varepsilon_\mu z_\mu^{-1}\,\mathsf p_\mu(Y)\;\Th_{\mathsf p_\mu}g.
\]
\end{prop}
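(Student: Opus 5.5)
The plan is to extend the map $f\mapsto\Th_f$ coefficientwise in $Y$ to the completion $\widehat{\Lambda(X)\otimes\Lambda(Y)}$, and then apply this extension to the two expansions of $\mathsf E[XY]$ in \eqref{eq:EXY}. For a series $F=\sum_\nu c_\nu(X)\,b_\nu(Y)$, with $\{b_\nu\}$ a homogeneous basis of $\Lambda(Y)$, we set $\Th_F g\coloneqq\sum_\nu b_\nu(Y)\,\Th_{c_\nu}g$. This is the meaning of the middle term. Since $\Th$ acts only through the $X$-factor, the definition is forced by $\mathbb Q(q,t)$-linearity.

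First I will check that the definition makes sense. For each fixed $Y$-degree $d$, the $Y$-degree $d$ part of $F$ is a finite sum $\sum_{|\nu|=d}c_\nu(X)b_\nu(Y)$. Applying $\Th$ termwise therefore gives a finite expression in each $Y$-degree, and the result lies in the completion. Independence of the chosen $Y$-basis follows degree by degree from linearity of $f\mapsto\Th_f$: a change of basis in $\Lambda(Y)^{(d)}$ is a finite invertible matrix over $\mathbb Q$, and its inverse transposed acts on the $X$-coefficients.

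Next I apply this to the two expansions of $\mathsf E[XY]$.
\begin{itemize}
\item Using the basis $\{\mathsf m_\lambda(Y)\}$, the middle equality of \eqref{eq:EXY} shows that the coefficient of $\mathsf m_\lambda(Y)$ in $\mathsf E[XY]$ is $\mathsf e_\lambda(X)$. This gives the left-hand side of the proposition.
\item Using the basis $\{\mathsf p_\mu(Y)\}$, the coefficient is $\varepsilon_\mu z_\mu^{-1}\mathsf p_\mu(X)$. Since $\varepsilon_\mu z_\mu^{-1}\in\mathbb Q$, linearity gives $\Th_{\varepsilon_\mu z_\mu^{-1}\mathsf p_\mu}=\varepsilon_\mu z_\mu^{-1}\Th_{\mathsf p_\mu}$, which yields the right-hand side.
\end{itemize}
Both computations rest on the same element $\mathsf E[XY]$, so basis-independence gives the two equalities at once.

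The hypothesis $g\in\Lambda^+$ plays no real role here. The identity holds for all $g$ under the paper's convention, and the restriction only records where the results are used later (cf.\ Remark~\ref{rem:unitconv}).

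The main obstacle, mild as it is, is the justification of basis-independence in the completion. I would handle it by working one $Y$-degree at a time, where the dual Cauchy identity \eqref{eq:EXY} is a finite identity in $\Lambda(X)\otimes\Lambda(Y)^{(d)}$. On that finite identity, $\mathrm{id}\otimes$-style linearity of $\Th$ in the $X$-factor applies directly.
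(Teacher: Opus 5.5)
Your proposal is correct and follows the paper's own proof, which is just linearity of $f\mapsto\Th_f$ in the subscript applied to the two expansions of $\mathsf E[XY]$ in \eqref{eq:EXY}; your degree-by-degree justification of the coefficientwise extension to the completion spells out what the paper leaves implicit. Your remark that the hypothesis $g\in\Lambda^+$ is not needed for this identity under the paper's convention is also correct.
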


\begin{proof}
Both equalities are the linearity of $\Th$ in its
subscript applied to \eqref{eq:EXY}.
\end{proof}



The $t\to1$ analysis needs the behaviour of a single singular quantity,
$\Pi_\mu/w_\mu$.

\begin{lem}\label{lem:leadcoef}
Fix $\mu\vdash n$ with $n\ge1$ and $\ell=\ell(\mu)$. Regarded as a rational
function of $t$ over $\mathbb Q(q)$, the ratio $w_\mu/\Pi_\mu$ has a simple
zero at $t=1$: the limit
\[
W_\mu\;\coloneqq\;\lim_{t\to1}\frac{(1-t)\,\Pi_\mu}{w_\mu}
\;=\;\frac{\varepsilon_\mu\,(\ell-1)!}
{\prod_i m_i(\mu)!\ \prod_j\qq{\mu_j}}
\]
exists and is a nonzero rational function of $q$.
\end{lem}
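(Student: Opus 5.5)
The plan is a direct factor-by-factor computation. Both $\Pi_\mu$ and $w_\mu$ are finite products of binomials of the form $q^{\alpha}t^{\beta}-q^{\gamma}t^{\delta}$. So I would find, for each factor, whether it vanishes at $t=1$. A vanishing factor of the form $1-t^{m}$ with $m\ge1$ equals $m(1-t)$ times a unit near $t=1$, and I would replace it by its leading term $m(1-t)$; every other factor I would simply evaluate at $t=1$. Counting the vanishing factors in each product then gives the orders of vanishing and the constant in front.

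\emph{Numerator.} In $\Pi_\mu=\prod_{c\ne(0,0)}(1-q^{a'(c)}t^{l'(c)})$, a factor vanishes at $t=1$ exactly when $a'(c)=0$, that is, when $c$ lies in the first column. Excluding the corner, these cells have $l'(c)=1,\dots,\ell-1$, so they contribute $(\ell-1)!\,(1-t)^{\ell-1}$ to leading order. The cells with $a'(c)\ge1$ in row $j$ have coarms $1,\dots,\mu_j-1$, and at $t=1$ they give $\prod_j\qq{\mu_j-1}$. Hence
\[
\Pi_\mu\sim(\ell-1)!\,(1-t)^{\ell-1}\prod_j\qq{\mu_j-1}.
\]

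\emph{Denominator.} In $w_\mu$, the factor $t^{l(c)}-q^{a(c)+1}$ specializes to $1-q^{a(c)+1}$, which never vanishes. Over row $j$ the arms run through $0,\dots,\mu_j-1$, so these factors give $\prod_j\qq{\mu_j}$.

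The factor $q^{a(c)}-t^{l(c)+1}$ vanishes exactly when $a(c)=0$, i.e.\ at the last cell of each row. There it contributes $(l(c)+1)(1-t)$, and since there are $\ell$ such cells, $w_\mu$ has a zero of order exactly $\ell$. For the constant, note that the leg of the last cell of row $j$ counts the later rows of the same length $\mu_j$. Within each block of $m_i$ equal rows these legs are therefore $0,\dots,m_i-1$, and the block contributes $m_i!$.

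The remaining cells with $a(c)\ge1$ in row $j$ give $\prod_{a=1}^{\mu_j-1}(q^a-1)=(-1)^{\mu_j-1}\qq{\mu_j-1}$. Altogether
\[
w_\mu\sim(1-t)^{\ell}\,\prod_i m_i!\,(-1)^{n-\ell}\prod_j\qq{\mu_j-1}\,\qq{\mu_j}.
\]

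\emph{Conclusion.} Dividing the two leading terms, $w_\mu/\Pi_\mu$ has a simple zero at $t=1$. The factors $\qq{\mu_j-1}$ cancel, and since $(-1)^{n-\ell}=\varepsilon_\mu$, which is its own inverse, the limit $W_\mu$ is exactly the stated expression. It is visibly a nonzero rational function of $q$.

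The only point requiring care, and the step I would double-check first, is the leg bookkeeping for the last cells of rows. It hinges on the French/English convention, namely which rows sit above a given cell. In either convention, however, the legs of row-ends within a block of equal rows form the list $0,\dots,m_i-1$ in some order, so the product $\prod_i m_i!$ is unaffected.
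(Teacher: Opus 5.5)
Your proposal is correct and follows essentially the same route as the paper. You isolate the first-column factors of $\Pi_\mu$ and the arm-$0$ factors $1-t^{l(c)+1}$ of $w_\mu$, and you read off $(\ell-1)!$ and $\prod_i m_i(\mu)!$ from their leading coefficients. You then evaluate the remaining factors at $t=1$ row by row, so that the $\qq{\mu_j-1}$ cancel and the signs assemble into $\varepsilon_\mu$. The only difference is cosmetic: you group the nonvanishing factors of $w_\mu$ by type rather than cell by cell, which is the same computation.
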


\begin{proof}
Every factor of $\Pi_\mu$ and of $w_\mu$ that vanishes at $t=1$ does so
simply, since it has the form $1-t^{d}=(1-t)[d]_t$ and
$[d]_t|_{t=1}=d$. It therefore suffices to locate the vanishing factors,
record their normalized leading coefficients, and evaluate the remaining
factors at $t=1$.

In $\Pi_\mu$ the factor $1-q^{a'(c)}t^{l'(c)}$ vanishes at $t=1$ exactly when
$a'(c)=0$, that is, when $c$ lies in the first column. Excluding $(0,0)$, these
are the cells of coleg $1,\dots,\ell-1$, so there are $\ell-1$ of them and
their leading coefficients multiply to $(\ell-1)!$. Each remaining cell has $a'(c)\ge1$
and value $1-q^{a'(c)}$, independent of $l'(c)$. The coarms in row $j$ are
$1,\dots,\mu_j-1$, so these cells contribute $\prod_j\qq{\mu_j-1}$.

In $w_\mu$ the factor $q^{a(c)}-t^{l(c)+1}$ vanishes at $t=1$ exactly when
$a(c)=0$, while $t^{l(c)}-q^{a(c)+1}$ never does.
There is one arm-$0$ cell per row and hence $\ell$ vanishing factors.
Within a block of $b$ equal parts their legs run $0,\dots,b-1$,
so the normalized leading coefficients contribute $b!$ and give
$\prod_i m_i(\mu)!$ over all blocks.
Evaluating the remaining factors at $t=1$ row by row, we find that the arm-$0$
cell of row $j$ leaves $1-q$ and its arm-$\ge1$ cells leave
\[
\prod_{a=1}^{\mu_j-1}(q^{a}-1)(1-q^{a+1})
=(-1)^{\mu_j-1}\,\frac{\qq{\mu_j-1}\,\qq{\mu_j}}{1-q},
\]
so row $j$ contributes $(-1)^{\mu_j-1}\qq{\mu_j-1}\qq{\mu_j}$ and all rows
together $\varepsilon_\mu\prod_j\bigl(\qq{\mu_j-1}\qq{\mu_j}\bigr)$.

Thus $\Pi_\mu$ vanishes to order $\ell-1$ and $w_\mu$ to order $\ell$, whence
the simple zero of $w_\mu/\Pi_\mu$. In $(1-t)\Pi_\mu/w_\mu$ the
powers of $1-t$ and the factors $\prod_j\qq{\mu_j-1}$ cancel, leaving
the asserted value of $W_\mu$.
\end{proof}

The identification of the first-column and arm-$0$ cells as the factors
vanishing at $t=1$ also appears in the proof of \cite[Lem.~4.2]{IR}.

\begin{rem}
The formula for $W_\mu$ in Lemma~\ref{lem:leadcoef} is symmetric in the
parts of $\mu$. We may therefore read it as the definition of
$W_\gamma$ for an arbitrary finite sequence $\gamma$ of positive integers, so
that $W_\gamma=W_{\gamma^+}$ with $\gamma^+$ the weakly decreasing
rearrangement of $\gamma$. This matches the convention adopted for
the $\hrow\gamma$.
\end{rem}

Set $\lambda_{\ell+1}=0$ for $\ell=\ell(\lambda)$. Then for $1\le i\le\ell+1$ the sequence
$\lambda+k\unit i$ runs over the two moves that enter below: raising a part
$r=\lambda_i\ge1$ to $r+k$, and, for $i=\ell+1$, appending a new part $k$.
The single ratio governing both is the following.

\begin{cor}\label{cor:Wratio}
Let $\lambda\vdash n\ge1$, let $k\ge1$, and let $\nu=\lambda+k\unit i$ with
$1\le i\le\ell(\lambda)+1$ and $r=\lambda_i$. Then we have
\[
\frac{W_\nu}{W_\lambda}
=(-1)^k\,\frac{\widehat m_r(\lambda)}{m_{r+k}(\nu)}\cdot\frac{\qq r}{\qq{r+k}},
\qquad\text{where}\quad
\widehat m_r(\lambda)\coloneqq
\begin{cases}
m_r(\lambda), & r\ge1,\\
-\ell(\lambda), & r=0.
\end{cases}
\]
\end{cor}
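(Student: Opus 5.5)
The plan is to compute the ratio directly from the closed formula of Lemma~\ref{lem:leadcoef},
\[
W_\mu=\frac{\varepsilon_\mu\,(\ell(\mu)-1)!}{\prod_i m_i(\mu)!\,\prod_j\qq{\mu_j}},
\]
applied to $\mu=\nu$ and $\mu=\lambda$. We track separately the changes in the sign $\varepsilon$, the length factorial, the multiplicity factorials, and the $q$-Pochhammer product. The argument splits into the two cases $r\ge1$ (raising an existing part) and $r=0$ (appending a new part $k$ at position $\ell+1$). By the remark after Lemma~\ref{lem:leadcoef}, $W_\nu$ is read for the possibly unsorted sequence $\lambda+k\unit i$, so no reordering issue arises.

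\textbf{Common factors.} In both cases $|\nu|=|\lambda|+k$. The sign $\varepsilon_\mu=(-1)^{|\mu|-\ell(\mu)}$ and the Pochhammer product behave as follows.
\begin{itemize}
\item For $r\ge1$ the length is unchanged, so $\varepsilon_\nu/\varepsilon_\lambda=(-1)^k$. The Pochhammer product changes only in one factor, $\qq{r}\mapsto\qq{r+k}$, giving the factor $\qq r/\qq{r+k}$.
\item For $r=0$ the length increases by one, so $\varepsilon_\nu/\varepsilon_\lambda=(-1)^{k-1}$. A new factor $\qq k$ appears, which equals $\qq{r+k}$ with $\qq0=1=\qq r$; so the Pochhammer contribution is again $\qq r/\qq{r+k}$.
\end{itemize}

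\textbf{Case $r\ge1$.} The factor $(\ell-1)!$ is unchanged. Only the multiplicities of $r$ and $r+k$ change: $m_r(\nu)=m_r(\lambda)-1$ and $m_{r+k}(\nu)=m_{r+k}(\lambda)+1$. Hence
\[
\frac{\prod_i m_i(\lambda)!}{\prod_i m_i(\nu)!}=\frac{m_r(\lambda)}{m_{r+k}(\nu)}.
\]
Combined with the common factors this gives $(-1)^k\,\dfrac{m_r(\lambda)}{m_{r+k}(\nu)}\cdot\dfrac{\qq r}{\qq{r+k}}$, as claimed.

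\textbf{Case $r=0$.} The length factor contributes $\ell!/(\ell-1)!=\ell$. Only the multiplicity of $k$ rises by one, so the multiplicity factorials contribute $1/m_k(\nu)$. Combined with the sign $(-1)^{k-1}=(-1)^k\cdot(-1)$, this yields $(-1)^k\,\dfrac{-\ell}{m_k(\nu)}\cdot\dfrac{\qq0}{\qq k}$. This matches the statement with $\widehat m_0(\lambda)=-\ell(\lambda)$.

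The computation is routine; the main thing to get right is the sign bookkeeping in the $r=0$ case. That extra sign is exactly what the definition $\widehat m_0=-\ell$ absorbs.
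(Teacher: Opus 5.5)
Your proposal is correct and follows the paper's route: read off $W_\nu/W_\lambda$ from the closed formula in Lemma~\ref{lem:leadcoef}. In the $r=0$ case you correctly find the length factor $\ell(\lambda)$ and the sign $(-1)^{k-1}$, both of which $\widehat m_0(\lambda)=-\ell(\lambda)$ absorbs, exactly as the paper notes.
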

\begin{proof}
Immediate from Lemma~\ref{lem:leadcoef}. The case $r=0$ is the only one that is not
literal bookkeeping. There $\ell(\nu)=\ell(\lambda)+1$ so
$(\ell(\nu)-1)!/(\ell(\lambda)-1)!=\ell(\lambda)$ and
$\varepsilon_\nu=(-1)^{k-1}\varepsilon_\lambda$ rather than $(-1)^{k}\varepsilon_\lambda$.
The value $\widehat m_0(\lambda)=-\ell(\lambda)$ accounts for both.
\end{proof}

\subsection{The derivation operators}\label{ssec:derivation}

By Lemma~\ref{lem:leadcoef} the pole of $\Pi_\nu/w_\nu$ cancels the zero
of $w_\mu/\Pi_\mu$ in \eqref{eq:adjcoef},
giving the following coefficient formula in the row basis.

\begin{lem}\label{lem:residue-transfer}
Let $n,k\ge1$, and let $\lambda\vdash n$ and $\nu\vdash n+k$.  Then $\Th_{\mathsf p_k}\hrow\lambda$ is regular at $t=1$ and we have
\[
[\hrow\nu]\,\Th_{\mathsf p_k}\hrow\lambda\big|_{t=1}
=(-1)^{k-1}\frac{W_\nu}{W_\lambda}
\big([\hrow\lambda]\,\mathsf p_k^\perp\hrow\nu\big).
\]
\end{lem}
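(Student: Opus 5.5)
Work first in the Macdonald bases, then pass to row bases via \ref{it:Rrow}. By \eqref{eq:adjcoef}, applied to $g=\Hmod\mu$ with $\mu\vdash n$, and using that $\Pop^{-1}\Hmod\mu=\Pi_\mu^{-1}\Hmod\mu$ together with star-orthogonality, I get for $\nu\vdash n+k$
\[
[\Hmod\nu]\,\Th_{\mathsf p_k}\Hmod\mu
=(-1)^{k-1}\frac{\Pi_\nu}{w_\nu}\cdot\frac{w_\mu}{\Pi_\mu}\,
\bigl([\Hmod\mu]\,\mathsf p_k^\perp\Hmod\nu\bigr).
\]
The scalar factor can be rewritten as $\frac{(1-t)\Pi_\nu/w_\nu}{(1-t)\Pi_\mu/w_\mu}$. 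By Lemma~\ref{lem:leadcoef}, since $\mu,\nu$ both have positive size, numerator and denominator are regular at $t=1$ with nonzero limits $W_\nu$ and $W_\mu$. Hence the ratio is regular with value $W_\nu/W_\mu$; here $n\ge1$ is used essentially.

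Next I handle the remaining factor $[\Hmod\mu]\,\mathsf p_k^\perp\Hmod\nu$. This is the matrix entry of the $t$-independent operator $\mathsf p_k^\perp\colon\Lambda^{(n+k)}\to\Lambda^{(n)}$ in the Macdonald bases. Its matrix in the row bases has entries in $\mathbb Q(q)$, so by \ref{it:Rrow} the Macdonald-basis matrix is regular at $t=1$ and specializes to the row-basis matrix. Thus the limit is $[\hrow\mu]\,\mathsf p_k^\perp\hrow\nu$.

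Combining these two steps, the matrix of $\Th_{\mathsf p_k}\colon\Lambda^{(n)}\to\Lambda^{(n+k)}$ in Macdonald bases is the product of a diagonal-type rescaling, regular by the previous paragraph, and the transpose-like matrix of $\mathsf p_k^\perp$. Entrywise, every entry is regular at $t=1$ with value
\[
(-1)^{k-1}\frac{W_\nu}{W_\mu}\,[\hrow\mu]\,\mathsf p_k^\perp\hrow\nu,
\]
using \ref{it:Rring}. Applying \ref{it:Rrow} once more, the row-basis matrix of $\Th_{\mathsf p_k}$ is regular at $t=1$ with the same specialization. Reading the column indexed by $\lambda$ shows that $\Th_{\mathsf p_k}\hrow\lambda$ is regular and that its $\hrow\nu$-coefficient at $t=1$ is the asserted expression.

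The only delicate point is the bookkeeping in the first step: $\Pop$ must be moved across the star pairing, and the adjointness identity must be applied in the correct slot. I would verify \eqref{eq:adjcoef} directly for $g=\Hmod\mu$ by expanding $\Th_{\mathsf p_k}\Hmod\mu=\Pi_\mu^{-1}\Pop\,\mathsf p_k^\ast\Hmod\mu$ via \eqref{eq:starorth}, and then checking that the pole and zero orders from Lemma~\ref{lem:leadcoef} cancel exactly. Beyond this, the rest consists of transferring regularity between bases.
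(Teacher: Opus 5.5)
Your proposal is correct and follows essentially the same route as the paper. You compute the Macdonald-basis entries $(-1)^{k-1}\frac{\Pi_\nu}{w_\nu}\frac{w_\mu}{\Pi_\mu}\,[\Hmod\mu]\,\mathsf p_k^\perp\Hmod\nu$ from \eqref{eq:adjcoef} and \eqref{eq:starorth}, rewrite the scalar as a ratio of the regular quantities $(1-t)\Pi/w$ from Lemma~\ref{lem:leadcoef} (using $W_\mu\ne0$ and $n\ge1$), specialize the $\mathsf p_k^\perp$ matrix via \ref{it:Rrow}, and transfer back to the row basis with \ref{it:Rring} and \ref{it:Rrow}.
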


\begin{proof}
For $\mu\vdash n$ and $\nu\vdash n+k$ put
$d_{\mu\nu}=[\Hmod\mu]\,\mathsf p_k^\perp\Hmod\nu$.
For $g=\Hmod\mu$ we have $\Pop^{-1}g=\Pi_\mu^{-1}\Hmod\mu$,
so evaluating the star pairing in \eqref{eq:adjcoef} by \eqref{eq:starorth} gives
\[
[\Hmod\nu]\,\Th_{\mathsf p_k}\Hmod\mu
=(-1)^{k-1}
\Big(\frac{\Pi_\nu}{w_\nu}\cdot\frac{w_\mu}{\Pi_\mu}\Big)d_{\mu\nu},
\]
an identity of rational functions of $t$ over $\mathbb Q(q)$.
The matrix $(d_{\mu\nu})$ represents $\mathsf p_k^\perp$ in the Macdonald bases.
Since the row-basis matrix of $\mathsf p_k^\perp$ has entries in $\mathbb Q(q)$,
\ref{it:Rrow} shows that $(d_{\mu\nu})$ is regular at $t=1$
with specialization $\big([\hrow\mu]\,\mathsf p_k^\perp\hrow\nu\big)$.

The scalar factor can be written as
$\frac{(1-t)\Pi_\nu}{w_\nu}\big(\frac{(1-t)\Pi_\mu}{w_\mu}\big)^{-1}$.
Since $W_\mu\ne0$, Lemma~\ref{lem:leadcoef} shows that it is regular at $t=1$
with value $W_\nu/W_\mu$.
Hence the Macdonald-basis matrix of
$\Th_{\mathsf p_k}\colon\Lambda^{(n)}\to\Lambda^{(n+k)}$ is regular at $t=1$
with specialized entry
$(-1)^{k-1}(W_\nu/W_\mu)\,[\hrow\mu]\,\mathsf p_k^\perp\hrow\nu$
in position $(\nu,\mu)$.

The row-basis matrix has the same regularity and specialization by \ref{it:Rrow}.
Its entries are the coefficients $[\hrow\nu]\,\Th_{\mathsf p_k}\hrow\lambda$,
which proves both assertions.
\end{proof}

The coefficient on the right in Lemma~\ref{lem:residue-transfer} is
elementary as the next lemma shows.

\begin{lem}\label{lem:pperp}
Let $k\ge1$. For every finite sequence $\nu$ of nonnegative integers we have
\[
\mathsf p_k^\perp\hrow\nu
=\qq{k-1}\sum_{j:\ \nu_j\ge k}\qbin{\nu_j}{k}\,\hrow{\nu-k\unit j}.
\]
\end{lem}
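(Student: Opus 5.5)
Since $\mathsf p_k^\perp$ is a derivation of $\Lambda$ (as the adjoint of multiplication by a primitive element), and $\hrow\nu=\prod_j\hrow{\nu_j}$, the plan is to reduce the statement to the single-row case. The Leibniz rule applied to $\hrow\nu$ gives
\[
\mathsf p_k^\perp\hrow\nu=\sum_j\Bigl(\mathsf p_k^\perp\hrow{\nu_j}\Bigr)\prod_{i\ne j}\hrow{\nu_i}.
\]
So it suffices to prove the single-row identity
\[
\mathsf p_k^\perp\hrow m=\qq{k-1}\qbin{m}{k}\hrow{m-k}
\]
for $m\ge k$, together with $\mathsf p_k^\perp\hrow m=0$ for $m<k$. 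The second fact holds for degree reasons when $m<k$, and when $m=0$ because $\hrow0=1$.

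To prove the single-row identity, I will work with the generating function. By \eqref{eq:factort1},
\[
\sum_{m\ge0}\frac{\hrow m}{\qq m}z^m=\sum_m\mathsf h_m\big[X/(1-q)\big]z^m=\exp\Bigl(\sum_{r\ge1}\frac{\mathsf p_r z^r}{r(1-q^r)}\Bigr).
\]
Since $\mathsf p_k^\perp$ acts as $k\,\partial/\partial\mathsf p_k$ on the power-sum polynomial ring, applying it to this exponential multiplies it by $z^k/(1-q^k)$. Comparing coefficients of $z^m$ then yields
\[
\frac{\mathsf p_k^\perp\hrow m}{\qq m}=\frac{1}{1-q^k}\cdot\frac{\hrow{m-k}}{\qq{m-k}}.
\]
From this, $\mathsf p_k^\perp\hrow m=\frac{\qq m}{(1-q^k)\qq{m-k}}\hrow{m-k}$, and the coefficient equals $\qq{k-1}\qbin mk$, as required.

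There is no serious obstacle here. The only points needing care are confirming that $\mathsf p_k^\perp=k\partial_{\mathsf p_k}$ under the normalization $\langle\mathsf p_\lambda,\mathsf p_\mu\rangle=z_\lambda\delta_{\lambda\mu}$, and checking the plethystic identity $\mathsf p_r[X/(1-q)]=\mathsf p_r/(1-q^r)$. Terms with $\nu_j<k$ vanish, which matches the restriction $\nu_j\ge k$ in the sum. Zero entries of $\nu$ contribute nothing, consistent with the convention $\hrow0=1$.
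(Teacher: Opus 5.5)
Your proposal is correct and follows essentially the same route as the paper: both use $\mathsf p_k^\perp=k\,\partial/\partial\mathsf p_k$ together with $\mathsf p_r[X/(1-q)]=\mathsf p_r/(1-q^r)$ to handle a single row, then apply the Leibniz rule. The only difference is that you differentiate the exponential generating function directly, whereas the paper first moves $\mathsf p_k^\perp$ through the plethysm and then quotes $\mathsf p_k^\perp\mathsf h_n=\mathsf h_{n-k}$.
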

\begin{proof}
Since $\mathsf p_k^\perp=k\,\partial/\partial\mathsf p_k$
\cite[Ch.~I, \S5]{Mac} and $\mathsf p_m[X/(1-q)]=\mathsf p_m/(1-q^m)$, we
have $\mathsf p_k^\perp\bigl(f[X/(1-q)]\bigr)
=(1-q^k)^{-1}(\mathsf p_k^\perp f)[X/(1-q)]$ for all $f$. 
Using $\mathsf p_k^\perp\mathsf h_n=\mathsf h_{n-k}$
and \eqref{eq:factort1} gives
\begin{equation}\label{eq:pperprow}
\mathsf p_k^\perp\hrow n
=\frac{\qq n}{(1-q^k)\,\qq{n-k}}\,\hrow{n-k}
=\qq{k-1}\qbin nk\,\hrow{n-k}\qquad(n\ge k),
\end{equation}
and $\mathsf p_k^\perp\hrow n=0$ for $n<k$. The lemma now follows by the Leibniz
rule as $\mathsf p_k^\perp$ is a derivation.
\end{proof}

Thus $[\hrow\lambda]\,\mathsf p_k^\perp\hrow\nu$ vanishes unless $\nu$ is
obtained from $\lambda$ by raising one part by $k$ or by appending a part
$k$: the right side in Lemma~\ref{lem:residue-transfer} selects exactly these two
moves on the row index. The following operator records them with the
coefficients that arise.

\begin{defn}\label{def:Dk}
For $k\ge1$ let $D_k$ be the linear operator on $\Lambda_{\mathbb Q(q)}$
determined by
\[
D_k\big(\hrow\lambda\big)\;=\;\frac{1}{q^k-1}
\Big(\sum_{i=1}^{\ell(\lambda)}\hrow{\lambda+k\unit i}\;-\;\ell(\lambda)\,
\hrow{\lambda\cup(k)}\Big).
\]
\end{defn}

\begin{lem}\label{lem:Dkder}
Each $D_k$ is a derivation of $\Lambda_{\mathbb Q(q)}$,
\[
D_k(fg)=D_k(f)\,g+f\,D_k(g),
\qquad D_k(1)=0 ,
\]
and the $D_k$ pairwise commute: $[D_j,D_k]=0$ for all $j,k\ge1$.
\end{lem}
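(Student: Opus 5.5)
The plan is to pass to the free generators. Since the $\hrow m$ with $m\ge1$ are algebraically independent and generate $\Lambda_{\mathbb Q(q)}$, the monomials $\hrow\lambda$ form a basis. There is therefore a unique derivation $\widetilde D_k$ with
\[
\widetilde D_k\hrow m=\frac{\hrow{m+k}-\hrow m\hrow k}{q^k-1}\qquad(m\ge1),
\]
extended by the Leibniz rule, with $\widetilde D_k(1)=0$. I will show that $\widetilde D_k$ agrees with $D_k$ on every basis element $\hrow\lambda$; this gives the derivation property of $D_k$ at once.

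The check is direct. For $\lambda=(\lambda_1,\dots,\lambda_\ell)$, Leibniz gives
\[
\widetilde D_k\hrow\lambda=\sum_{i=1}^{\ell}\hrow{\lambda-\lambda_i\unit i}\,\widetilde D_k\hrow{\lambda_i}=\frac{1}{q^k-1}\sum_{i=1}^{\ell}\bigl(\hrow{\lambda+k\unit i}-\hrow{\lambda}\hrow k\bigr).
\]
Since $\hrow\lambda\hrow k=\hrow{\lambda\cup(k)}$, the second terms total $\ell(\lambda)\hrow{\lambda\cup(k)}$, and this is exactly Definition~\ref{def:Dk}. The case $\lambda=\varnothing$ gives $D_k(1)=0$, consistent with the empty sum.

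For commutativity, the commutator of two derivations is again a derivation, so it suffices to check $[D_j,D_k]\hrow m=0$ for every $m\ge1$. Write $c_k=(q^k-1)^{-1}$. Then
\[
D_jD_k\hrow m=c_k\bigl(D_j\hrow{m+k}-D_j(\hrow m)\hrow k-\hrow m D_j(\hrow k)\bigr).
\]
Substituting the formula for $D_j$ on single rows expresses $D_jD_k\hrow m$ in terms of $\hrow{m+j+k}$, $\hrow{m+k}\hrow j$, $\hrow{m+j}\hrow k$, $\hrow m\hrow j\hrow k$ and $\hrow m\hrow{j+k}$. Collecting coefficients, $\hrow{m+j+k}$ appears with coefficient $c_jc_k$, which is symmetric. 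The terms $\hrow{m+k}\hrow j$ and $\hrow{m+j}\hrow k$ both appear with coefficient $-c_jc_k$, so that part is symmetric too. The terms $\hrow m\hrow{j+k}$ come with $-c_jc_k$ and the terms $\hrow m\hrow j\hrow k$ with $c_jc_k\cdot(1+1)$, both symmetric after tallying. Interchanging $j$ and $k$ therefore leaves the expression unchanged, and so $[D_j,D_k]\hrow m=0$.

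The only real work is this last bookkeeping step. I expect it to close cleanly because every term carries the same scalar $c_jc_k$; the point to verify is that the multiplicities of the mixed terms match under $j\leftrightarrow k$.
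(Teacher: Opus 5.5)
Your proof is correct and follows the paper's argument. The derivation law is the additivity of Definition~\ref{def:Dk} under $\lambda\mapsto\lambda\cup\rho$ on the multiplicative row basis, and commutativity reduces, since $[D_j,D_k]$ is a derivation, to a check on the generators $\hrow m$. Your tally reproduces exactly the paper's symmetric identity $(q^j-1)(q^k-1)D_jD_k\hrow m=\hrow{m+j+k}-\hrow{(m+k,j)}-\hrow{(m+j,k)}-\hrow{(m,j+k)}+2\hrow{(m,j,k)}$.
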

\begin{proof}
The rule of Definition~\ref{def:Dk} is additive under
$\lambda\mapsto\lambda\cup\rho$, which is the derivation law on the
multiplicative basis $\{\hrow\lambda\}$, and $D_k(1)=0$ because the empty
partition has length $0$. Since $[D_j,D_k]$ is again a derivation, it
vanishes as soon as it vanishes on the algebra generators $\hrow n$.
A direct calculation gives the following identity,
whose right-hand side is symmetric in $j$ and $k$.
\begin{equation*}
(q^j-1)(q^k-1)\,D_jD_k\hrow n
=\hrow{n+j+k}-\hrow{(n+k,j)}-\hrow{(n+j,k)}-\hrow{(n,j+k)}
+2\,\hrow{(n,j,k)}.\qedhere
\end{equation*}
\end{proof}

\begin{thm}\label{thm:Dm}
As operators on the positive-degree part
$\Lambda^+_{\mathbb Q(q)}$ we have
\[
\Th_{\mathsf p_k}\big|_{t=1}=D_k
\qquad(k\ge1).
\]
\end{thm}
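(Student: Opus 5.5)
Both sides are linear on $\Lambda^+_{\mathbb Q(q)}$, so we compare them on the row basis $\{\hrow\lambda\}$ with $\lambda\vdash n\ge1$. Lemma~\ref{lem:residue-transfer} shows that $\Th_{\mathsf p_k}\hrow\lambda$ is regular at $t=1$. It also expresses each coefficient $[\hrow\nu]\,\Th_{\mathsf p_k}\hrow\lambda|_{t=1}$ as $(-1)^{k-1}(W_\nu/W_\lambda)\,[\hrow\lambda]\,\mathsf p_k^\perp\hrow\nu$. The proof therefore reduces to computing this product for every $\nu\vdash n+k$ and matching it against Definition~\ref{def:Dk}.

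First, by Lemma~\ref{lem:pperp} the coefficient $[\hrow\lambda]\,\mathsf p_k^\perp\hrow\nu$ vanishes unless $\nu=\lambda+k\unit i$ for some $1\le i\le\ell(\lambda)+1$. Fix such a $\nu$, and put $r=\lambda_i$ and $s=r+k$. Several indices $j$ with $\nu_j=s$ may contribute to $\mathsf p_k^\perp\hrow\nu$. Each one gives $\qq{k-1}\qbin{s}{k}\hrow{\nu-k\unit j}$, and after sorting every such term equals $\hrow\lambda$. Hence
\[
[\hrow\lambda]\,\mathsf p_k^\perp\hrow\nu=m_s(\nu)\,\qq{k-1}\qbin{s}{k},
\]
where $\qbin{s}{k}$ is read as $1$ when $r=0$.

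Next, Corollary~\ref{cor:Wratio} supplies $W_\nu/W_\lambda$. Multiplying, the $m_s(\nu)$ cancels. Using $\qq r\,\qq{k-1}\,\qbin{s}{k}/\qq{s}=\qq{k-1}/\qq k=1/(1-q^k)$, the coefficient becomes
\[
(-1)^{k-1}(-1)^k\,\frac{\widehat m_r(\lambda)}{1-q^k}=\frac{\widehat m_r(\lambda)}{q^k-1}.
\]
In the case $r=0$ we have $\widehat m_0=-\ell(\lambda)$, which is exactly the coefficient of $\hrow{\lambda\cup(k)}$ in $D_k\hrow\lambda$.

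In the case $r\ge1$ we must also account for multiplicity on the side of $D_k$. The sum $\sum_i\hrow{\lambda+k\unit i}$ counts each distinct sorted $\nu$ once for every index $i$ with $\lambda_i=r$, that is, $m_r(\lambda)$ times. This matches $\widehat m_r(\lambda)/(q^k-1)$. Hence $D_k\hrow\lambda=\Th_{\mathsf p_k}\hrow\lambda|_{t=1}$ for all $\lambda\vdash n\ge1$, which proves the theorem.

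The main obstacle is this multiplicity bookkeeping in the row-basis coefficients. One must treat $[\hrow\nu]$ with $\nu$ as a partition, whereas $\hrow{\lambda+k\unit i}$ is indexed by a sequence. One must also handle the case where $\lambda+k\unit i$ coincides with $\lambda\cup(k)$, namely $r=0$ versus appending. No coincidence occurs between the two cases: raising a part $r\ge1$ to $r+k$ removes a part equal to $r$, while appending keeps it, so the resulting partitions differ. The signs are then handled by the $\varepsilon$-adjustment already built into $\widehat m_0$.
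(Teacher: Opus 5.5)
Your proposal is correct and follows the paper's proof essentially step for step. You use the regularity and coefficient formula of Lemma~\ref{lem:residue-transfer}, count the $m_{r+k}(\nu)$ contributing indices via Lemma~\ref{lem:pperp}, and cancel against Corollary~\ref{cor:Wratio} to reach $\widehat m_r(\lambda)/(q^k-1)$. One bookkeeping point you leave implicit, which the paper states, is that a part $s'+k$ of $\nu$ with $s'\neq r$ contributes nothing, since $\{r+k,s'\}\neq\{r,s'+k\}$ as multisets; equivalently, $r$ is determined by $\nu$, which you also need when you count the $m_r(\lambda)$ indices on the $D_k$ side, whereas your final paragraph only rules out a coincidence between raising and appending.
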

\begin{proof}
Fix $k\ge1$. By Lemma~\ref{lem:residue-transfer}, $\Th_{\mathsf p_k}\hrow\lambda$
is regular at $t=1$ for every partition $\lambda$ of positive size, so
specializing on the row basis defines the linear operator
$\Th_{\mathsf p_k}|_{t=1}$ on $\Lambda^+_{\mathbb Q(q)}$. Both it and $D_k$
raise degree by $k$, so it suffices to compare
$[\hrow\nu]\,\Th_{\mathsf p_k}\hrow\lambda|_{t=1}$ with
$[\hrow\nu]\,D_k\hrow\lambda$ for $\lambda\vdash n\ge1$ and $\nu\vdash n+k$.
As in Corollary~\ref{cor:Wratio} set $\ell=\ell(\lambda)$ and $\lambda_{\ell+1}=0$.
Say that $\nu$ is \emph{reachable} if $\nu=\lambda+k\unit i$ as multisets
for some $1\le i\le\ell+1$. In that case put $r=\lambda_i$. The argument below also shows that $r$ is determined by $\nu$.
The move raises a part $r$ to $r+k$ when $r\ge1$,
while for $r=0$ it appends a part $k$ and gives $\nu=\lambda\cup(k)$.
Definition~\ref{def:Dk} says that
$[\hrow\nu]\,D_k\hrow\lambda$ vanishes for $\nu$ not reachable and equals
$\widehat m_r(\lambda)/(q^k-1)$ otherwise.
For $r\ge1$ the $m_r(\lambda)$ indices $i$ with $\lambda_i=r$
all yield the same $\hrow\nu$, while for $r=0$
the coefficient of $\hrow{\lambda\cup(k)}$ is $-\ell(\lambda)$.

By Lemmas~\ref{lem:residue-transfer} and~\ref{lem:pperp},
\begin{equation}\label{eq:Dmgoal}
[\hrow\nu]\,\Th_{\mathsf p_k}\hrow\lambda\big|_{t=1}
=(-1)^{k-1}\,\frac{W_\nu}{W_\lambda}\,\qq{k-1}\sum_{j}\qbin{\nu_j}{k},
\end{equation}
the sum over those $j$ with $\nu_j\ge k$ for which $\nu-k\unit j$ equals
$\lambda$ as a multiset, zero parts being discarded. This sum is empty unless
$\nu$ is reachable, which settles the vanishing case. If $\nu$ is reachable,
the contributing $j$ are exactly those with $\nu_j=r+k$: removing $k$ from a
part $r+k$ of $\nu$ returns $\lambda$, and a part $s+k$ with $s\ne r$ does
not, since $\{r+k,s\}\ne\{r,s+k\}$ as multisets. There are $m_{r+k}(\nu)$
such $j$, so \eqref{eq:Dmgoal} and Corollary~\ref{cor:Wratio} give
\begin{equation*}
[\hrow\nu]\,\Th_{\mathsf p_k}\hrow\lambda\big|_{t=1}
=(-1)^{2k-1}\,\widehat m_r(\lambda)\,\frac{\qq r}{\qq{r+k}}\,
\qq{k-1}\qbin{r+k}{k}
=\frac{\widehat m_r(\lambda)}{q^k-1}
=[\hrow\nu]\,D_k\hrow\lambda .\qedhere
\end{equation*}
\end{proof}

\begin{rem}\label{rem:genericfail}
Of the two properties of the $D_k$ in Lemma~\ref{lem:Dkder}, only the
derivation law is special to $t=1$. Already $\Th_{\mathsf p_1}$ fails to be a
derivation at generic $t$ since
$\Th_{\mathsf p_1}(\mathsf e_1^2)-2\,\mathsf e_1\Th_{\mathsf p_1}(\mathsf e_1)
={-(1-q)(1-t)\,\mathsf s_{111}}$ and the defect vanishes exactly at $t=1$.
The commutation $[D_j,D_k]=0$ is the specialization of the generic identity
$\Th_f\Th_g=\Th_{fg}=\Th_g\Th_f$.
\end{rem}


\subsection{Specialization}\label{ssec:spec}
Theorem~\ref{thm:Dm} specializes $\Th_{\mathsf p_k}$ one subscript at a
time. Since Lemma~\ref{lem:residue-transfer} gives regularity of the whole
operator, composites specialize as well. This is the form in which
Theorem~\ref{thm:Dm} is used from now on.

\begin{prop}\label{prop:spec}
Let $f\in\Lambda_{\mathbb Q(q)}$. As an operator on $\Lambda^+$, $\Th_f$ is
regular at $t=1$, meaning that its matrix in the row basis is regular there
in each degree, and for every partition $\mu$ we have
\[
\Th_{\mathsf p_\mu}\big|_{t=1}
=D_{\mu_1}\circ\cdots\circ D_{\mu_{\ell(\mu)}}
\qquad\text{on }\Lambda^+_{\mathbb Q(q)} .
\]
Consequently, if $g\in\Lambda^+$ is regular at $t=1$, then so is $\Th_fg$,
with $(\Th_fg)|_{t=1}=\Th_f|_{t=1}\bigl(g|_{t=1}\bigr)$. In particular, for
such $g$, every specialization occurring in Proposition~\ref{prop:master}
exists, and the families
$\{\Th_{\mathsf e_\lambda}g|_{t=1}\}_\lambda$ and
$\{\Th_{\mathsf p_\mu}g|_{t=1}\}_\mu$ determine one another.
\end{prop}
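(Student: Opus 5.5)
The proof proceeds by induction on the length of $\mu$, using Lemma~\ref{lem:residue-transfer} as the basic input. That lemma shows that for each $k\ge1$ and each $n\ge1$, the row-basis matrix of $\Th_{\mathsf p_k}\colon\Lambda^{(n)}\to\Lambda^{(n+k)}$ has entries regular at $t=1$. Theorem~\ref{thm:Dm} identifies the specialized matrix with that of $D_k$.

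For a partition $\mu=(\mu_1,\dots,\mu_\ell)$, multiplicativity of $\Th$ in its subscript gives $\Th_{\mathsf p_\mu}=\Th_{\mathsf p_{\mu_1}}\circ\cdots\circ\Th_{\mathsf p_{\mu_\ell}}$. Each factor raises degree and therefore maps $\Lambda^+$ into $\Lambda^+$, so Lemma~\ref{lem:residue-transfer} applies at every stage. The row-basis matrix of the composite on $\Lambda^{(n)}$ is thus a product of matrices regular at $t=1$. By \ref{it:Rring} it is regular, and its specialization is the product of the specializations, namely the matrix of $D_{\mu_1}\circ\cdots\circ D_{\mu_\ell}$. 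This proves the displayed identity, with the empty partition giving the identity operator.

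For a general $f\in\Lambda_{\mathbb Q(q)}$ we expand $f=\sum_\mu c_\mu\,\mathsf p_\mu$ with $c_\mu\in\mathbb Q(q)$. These coefficients do not involve $t$, so they are regular. Linearity of $f\mapsto\Th_f$ together with \ref{it:Rring} then gives regularity of $\Th_f$ on each $\Lambda^{(n)}$ with $n\ge1$. It also gives $\Th_f|_{t=1}=\sum_\mu c_\mu D_{\mu_1}\cdots D_{\mu_{\ell(\mu)}}$.

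Next take $g\in\Lambda^+$ regular at $t=1$. Split $g$ into homogeneous components and write each component in the row basis. Its coefficient vector is regular, and $\Th_f g$ has coefficient vector equal to the matrix of $\Th_f$ applied to that vector. Applying \ref{it:Rring} once more yields regularity of $\Th_f g$ and the formula $(\Th_f g)|_{t=1}=\Th_f|_{t=1}(g|_{t=1})$.

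For Proposition~\ref{prop:master}, apply this with $f=\mathsf e_\lambda$ and $f=\mathsf p_\mu$, both of which lie in $\Lambda_{\mathbb Q}$. All the specializations appearing there therefore exist. To see that the two families determine one another, compare coefficients of $\mathsf m_\lambda(Y)$, or equivalently of $\mathsf p_\mu(Y)$, degree by degree in $Y$. In each $Y$-degree $d$ the relation $\Th_{\mathsf e_\lambda}g=\sum_{\mu\vdash d}a_{\lambda\mu}\Th_{\mathsf p_\mu}g$ holds, where $(a_{\lambda\mu})$ is the invertible rational transition matrix from $\{\mathsf p_\mu\}$ to $\{\mathsf e_\lambda\}$. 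This relation survives specialization, and inverting $(a_{\lambda\mu})$ gives the converse direction.

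The one point needing care is that the composition never passes through degree $0$. Regularity fails there because $\Th_{\mathsf e_n}(1)$ has a pole, as noted in Remark~\ref{rem:unitconv}. Since each $\Th_{\mathsf p_k}$ strictly raises degree and we begin in $\Lambda^+$, every intermediate space is some $\Lambda^{(m)}$ with $m\ge1$, and the argument goes through. Beyond this observation, everything reduces to bookkeeping with \ref{it:Rring} and \ref{it:Rrow}.
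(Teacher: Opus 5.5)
Your proposal is correct and follows essentially the same route as the paper. You get regularity of each $\Th_{\mathsf p_k}$ on $\Lambda^{(n)}$, $n\ge1$, with value $D_k$ from Lemma~\ref{lem:residue-transfer} and Theorem~\ref{thm:Dm}, handle composites by multiplicativity and \ref{it:Rring}, and treat general $f$ by $\mathbb Q(q)$-linearity; you also spell out the consequences (specializing $\Th_f g$, comparing coefficients degree by degree in $Y$, and checking that degree $0$ is never reached) that the paper covers with ``the consequences follow.''
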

\begin{proof}
In each degree, the matrix of $\Th_{\mathsf p_k}$ in the row basis is regular
at $t=1$ with value $D_k$, by Lemma~\ref{lem:residue-transfer} and
Theorem~\ref{thm:Dm}. By \ref{it:Rring},
$\Th_{\mathsf p_\mu}=\Th_{\mathsf p_{\mu_1}}\cdots\Th_{\mathsf p_{\mu_{\ell(\mu)}}}$
gives the displayed formula, and every $f$ is a finite
$\mathbb Q(q)$-linear combination of the $\mathsf p_\mu$. The consequences
follow.
\end{proof}

\begin{rem}\label{rem:HS}
The derivation law extends to a product rule for every subscript.
Write $\Delta f=f[X+Y]=\sum f_{(1)}\otimes f_{(2)}$ for the coproduct.\footnote{This is not to be confused with the Delta operator from Macdonald theory.}
Recall that the power sums are primitive and that Theorem~\ref{thm:Dm} makes each $\Th_{\mathsf p_k}|_{t=1}$ a derivation.
Since $f\mapsto\Th_f$ and $\Delta$ are algebra maps we obtain
\[
\Th_f(gh)\big|_{t=1}=\sum \Th_{f_{(1)}}g\big|_{t=1}\cdot\Th_{f_{(2)}}h\big|_{t=1}
\qquad(f\in\Lambda_{\mathbb Q(q)},\ g,h\in\Lambda^+_{\mathbb Q(q)}).
\]
Let $u$ be a formal variable and set $\mathsf E[uX]=\sum_{k\ge0}u^k\mathsf e_k$.
The corresponding operator is
\[
\Th_{\mathsf E[uX]}\big|_{t=1}
=\exp\Big(\sum_{r\ge1}(-1)^{r-1}u^rD_r/r\Big).
\]
This exponential of a derivation defines a ring automorphism of
$\Lambda^+_{\mathbb Q(q)}[[u]]$ and is the counterpart of the
multiplicativity of $\nabla$ at $t=1$ discussed in the introduction.
\end{rem}
\section{A derivation-friendly family of symmetric functions}\label{sec:gamma}

This section introduces the composition-indexed family $\Gamma_\alpha$ on
which both LLT expansions rest. We will show that each $D_k\Gamma_\alpha$
is a nonnegative integral combination of members of this family.
Throughout this section and the next,
$\alpha=(\alpha_0,\alpha_1,\dots,\alpha_s)$ denotes a nonempty composition,
indexed from $0$ because the first part has a distinguished role.

\begin{defn}\label{def:Gamma}
Define $\Gamma_\alpha\in\Lambda_{\mathbb Q(q)}$ recursively by
\[
\Gamma_{(n)}=\hrow n,
\qquad
(q^b-1)\,\Gamma_{\alpha\cdot(b)}=\Gamma_{\alpha\odot(b)}-\Gamma_\alpha\,\hrow b
\qquad(b\ge1).
\]
\end{defn}
It is clear that the recursion in Definition~\ref{def:Gamma} determines the family by induction on the number of parts. 
It is also convenient to read the recursion as a rule for \emph{multiplying by a row
function}: for every nonempty composition $\beta$ and every $k\ge1$, we have that
\[
\Gamma_\beta\,\hrow k
=\Gamma_{\beta\odot(k)}-(q^k-1)\,\Gamma_{\beta\cdot(k)}.
\]
Solving
the recursion in general expresses $\Gamma_\alpha$ in the row basis
with coefficients in $\mathbb Q(q)$, but signs and denominators appear at
every step, and positivity is not obvious.



\begin{eg}\label{eg:gammathree}
As an example which is suggestive of what is to come, we have
\[
\Gamma_{(a,b)}=\frac{\hrow{a+b}-\hrow{(a,b)}}{q^b-1}=D_b\hrow a .
\]
For something more involved consider 
$\alpha=(1,2,1)$. Then we have
\[
\Gamma_{(1,2,1)}
=\frac{\hrow4}{(q^{3}-1)(q-1)}
-\frac{(q^{2}+2q+2)\,\hrow{(3,1)}}{(q^{3}-1)(q^{2}-1)}
+\frac{\hrow{(2,1,1)}}{(q^{2}-1)(q-1)}=\mathsf s_{22}+(q+1)\,\mathsf s_{211}+(q^{2}+q)\,\mathsf s_{1111}\,.
\]
Theorem~\ref{thm:bfsdfs} will explain both the cancellation and the
positivity in the Schur basis.
\end{eg}

We now compute $D_k\Gamma_\alpha$ with the aim of showing that the result is a nonnegative integral combination of the $\Gamma_{\beta}$.
To this end, the following lemma is helpful.

\begin{lem}\label{lem:tworec}
    For a nonempty composition $\alpha$ and integers $b,k\geq 1$ the following hold.
    \begin{align*}
    (q^b-1)D_k\Gamma_{\alpha\cdot(b)}
&=D_k\Gamma_{\alpha\odot(b)}-\bigl(D_k\Gamma_\alpha\bigr)\hrow b
-\Gamma_\alpha\Gamma_{(b,k)},\\
    \Gamma_{\alpha\odot(b)\cdot(k)}
&=q^b\Gamma_{\alpha\cdot(b+k)}
+(q^b-1)\Gamma_{\alpha\cdot(b)\cdot(k)}
+\Gamma_\alpha\Gamma_{(b,k)}.
    \end{align*}
\end{lem}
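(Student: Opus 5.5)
The first identity comes from applying $D_k$ to the defining recursion of Definition~\ref{def:Gamma}, and the second from applying that recursion several times to both sides.

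\emph{First identity.} Start from $(q^b-1)\Gamma_{\alpha\cdot(b)}=\Gamma_{\alpha\odot(b)}-\Gamma_\alpha\hrow b$ and apply $D_k$. The derivation law of Lemma~\ref{lem:Dkder} gives
\[
(q^b-1)D_k\Gamma_{\alpha\cdot(b)}=D_k\Gamma_{\alpha\odot(b)}-(D_k\Gamma_\alpha)\hrow b-\Gamma_\alpha D_k\hrow b .
\]
By Definition~\ref{def:Dk}, $D_k\hrow b=(\hrow{b+k}-\hrow{(b,k)})/(q^k-1)$. By Example~\ref{eg:gammathree}, or directly from the recursion applied to $\alpha=(b)$, this equals $\Gamma_{(b,k)}$. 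That gives the first identity.

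\emph{Second identity.} Expand both sides with the recursion and compare. Here $\alpha\odot(b)$ is a nonempty composition, so the recursion applies to it with appended part $k$:
\[
(q^k-1)\Gamma_{\alpha\odot(b)\cdot(k)}=\Gamma_{\alpha\odot(b+k)}-\Gamma_{\alpha\odot(b)}\hrow k .
\]
Next use the recursion to write $\Gamma_{\alpha\odot(b+k)}=\Gamma_\alpha\hrow{b+k}+(q^{b+k}-1)\Gamma_{\alpha\cdot(b+k)}$ and $\Gamma_{\alpha\odot(b)}=\Gamma_\alpha\hrow b+(q^b-1)\Gamma_{\alpha\cdot(b)}$. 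For the product $\Gamma_{\alpha\cdot(b)}\hrow k$, use the multiplication rule stated after Definition~\ref{def:Gamma} with $\beta=\alpha\cdot(b)$:
\[
\Gamma_{\alpha\cdot(b)}\hrow k=\Gamma_{\alpha\cdot(b+k)}-(q^k-1)\Gamma_{\alpha\cdot(b)\cdot(k)} .
\]
Here $\alpha\cdot(b)\odot(k)=\alpha\cdot(b+k)$. Substituting all of this, $(q^k-1)\Gamma_{\alpha\odot(b)\cdot(k)}$ becomes
\[
\Gamma_\alpha\bigl(\hrow{b+k}-\hrow b\hrow k\bigr)+\bigl[(q^{b+k}-1)-(q^b-1)\bigr]\Gamma_{\alpha\cdot(b+k)}+(q^b-1)(q^k-1)\Gamma_{\alpha\cdot(b)\cdot(k)} .
\]
The first bracket is $(q^k-1)\Gamma_{(b,k)}$, and the second coefficient is $q^b(q^k-1)$. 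Dividing by $q^k-1$ yields the claimed identity.

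The only care needed is the bookkeeping of concatenation versus near-concatenation, notably $\alpha\cdot(b)\odot(k)=\alpha\cdot(b+k)$ and $\alpha\odot(b)\odot(k)=\alpha\odot(b+k)$, and that every composition the recursion is applied to is nonempty. Since $\alpha$ is nonempty, this holds throughout. I expect no real obstacle: both identities are formal consequences of the recursion and the derivation law.
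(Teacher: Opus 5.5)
Your proposal is correct and follows the paper's proof. The first identity is obtained exactly as in the paper, by applying the derivation $D_k$ to the defining recursion and using $D_k\hrow b=\Gamma_{(b,k)}$. For the second, the paper expands $\Gamma_{\alpha\odot(b)}\hrow k$ in two ways and equates them, which uses precisely your substitutions (including $\hrow{b+k}-\hrow b\hrow k=(q^k-1)\Gamma_{(b,k)}$ and $\alpha\cdot(b)\odot(k)=\alpha\cdot(b+k)$) before cancelling $q^k-1$.
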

\begin{proof}
    The first relation follows by applying the derivation $D_k$ to the recursion in Definition~\ref{def:Gamma} and using the fact that $D_k\hrow b=\Gamma_{(b,k)}$. The second is slightly more intricate.

    We expand $\Gamma_{\alpha\odot(b)}\hrow k$ in two ways. On the one hand we get
    \begin{align}\label{eq:one_way}
        \Gamma_{\alpha\odot(b)}\hrow k
=\Gamma_{\alpha\odot(b+k)}-(q^k-1)\Gamma_{\alpha\odot(b)\cdot(k)} 
=\Gamma_\alpha\hrow{b+k}+(q^{b+k}-1)\Gamma_{\alpha\cdot(b+k)}
-(q^k-1)\Gamma_{\alpha\odot(b)\cdot(k)}.
    \end{align}
On the other hand we have
\begin{align}
    \Gamma_{\alpha\odot(b)}\hrow k
    &=\Gamma_\alpha\hrow
b\hrow k+(q^b-1)\Gamma_{\alpha\cdot(b)}\hrow k\nonumber\\
&=
\Gamma_\alpha\hrow {b+k}-(q^k-1)\Gamma_{\alpha}\Gamma_{(b,k)}
+(q^b-1)\Gamma_{\alpha\cdot(b+k)}-(q^b-1)(q^k-1)\Gamma_{\alpha\cdot (b)\cdot (k)}.\label{eq:two_way}
\end{align}
Equating the right hand sides in~\eqref{eq:one_way} and~\eqref{eq:two_way} and collecting the target term $\Gamma_{\alpha\odot (b)\cdot (k)}$ on one side we get
\begin{align*}
    (q^{k}-1)\Gamma_{\alpha\odot (b)\cdot (k)}=q^b(q^{k}-1)\Gamma_{\alpha \cdot (b+k)}+(q^{k}-1)\Gamma_{\alpha}\Gamma_{(b,k)}+(q^{b}-1)(q^k-1)\Gamma_{\alpha\cdot (b)\cdot (k)}.
\end{align*}
Cancelling $q^{k}-1\neq 0$ yields the second claim.
\end{proof}

\begin{defn}
    For $\alpha=(\alpha_0,\dots,\alpha_s)$ a nonempty composition and $k\geq 1$ define the multiset
    \[
    Q_k(\alpha)\coloneqq\{(\alpha_0,\dots,\alpha_j+k,\dots,\alpha_s) \mid 1\leq j\leq s\}\sqcup \{(\alpha_0,\dots,\alpha_{j},k,\alpha_{j+1},\dots,\alpha_s) \mid 0\leq j\leq s\}.
    \]
    Now define the symmetric function  
    \[
    \mathsf R_k(\alpha)\;\coloneqq\;\sum_{\gamma\in Q_{k}(\alpha)} \Gamma_{\gamma}.
    \]
\end{defn}
Note that the first multiset consists of the compositions obtained by adding $k$ to one of $\alpha_1,\dots,\alpha_s$
and the second of those obtained by inserting $k$ as a new part immediately after one of
$\alpha_0,\dots,\alpha_s$. Neither operation alters the initial part
$\alpha_0$, and repeated compositions in $Q_k(\alpha)$ contribute with
their multiplicities.
\begin{eg}
    For
$\alpha=(2,1,1)$ and $k=1$, we have that
\[
\mathsf R_1(\alpha)=\Gamma_{(2,1+\underline{1},1)}+\Gamma_{(2,1,1+\underline{1})}+\Gamma_{(2,\underline{1},1,1)}+\Gamma_{(2,1,\underline{1},1)}+\Gamma_{(2,1,1,\underline{1})}.
\]    
\end{eg}

\begin{prop}\label{prop:insertion}
For $\alpha=(\alpha_0,\dots,\alpha_s)$ a nonempty composition and $k\geq 1$ we have
\[
D_k\Gamma_\alpha=\mathsf R_k(\alpha).
\]
\end{prop}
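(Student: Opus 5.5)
We argue by induction on the number $s$ of parts of $\alpha$ after the initial part $\alpha_0$. For the base case $s=0$ we have $\alpha=(n)$ and $\Gamma_{(n)}=\hrow n$. By Definition~\ref{def:Dk} (equivalently Theorem~\ref{thm:introDm}), $D_k\hrow n=(\hrow{n+k}-\hrow n\hrow k)/(q^k-1)$, and by Definition~\ref{def:Gamma} this equals $\Gamma_{(n,k)}$. On the other side, $Q_k((n))$ has no raising moves and a single insertion, giving $(n,k)$. So $\mathsf R_k((n))=\Gamma_{(n,k)}$, which settles $s=0$.

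For the inductive step, write a composition with $s+1\ge1$ trailing parts as $\alpha\cdot(b)$ with $\alpha=(\alpha_0,\dots,\alpha_s)$. Both $\alpha$ and $\alpha\odot(b)$ have $s$ trailing parts, so the induction hypothesis applies to them. The first identity of Lemma~\ref{lem:tworec} then gives
\[
(q^b-1)D_k\Gamma_{\alpha\cdot(b)}=\mathsf R_k(\alpha\odot(b))-\mathsf R_k(\alpha)\,\hrow b-\Gamma_\alpha\Gamma_{(b,k)}.
\]
It therefore suffices to prove that the right-hand side equals $(q^b-1)\mathsf R_k(\alpha\cdot(b))$, and then divide by $q^b-1$.

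To compare the two sides, sort the terms according to where $k$ lands relative to the last part of $\alpha$ and the part $b$.

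\emph{Moves acting strictly inside $\alpha$.} These are raising $\alpha_j$ for $1\le j\le s-1$ (when $s\ge1$), or inserting after $\alpha_j$ for $j\le s-1$. For each such $\gamma\in Q_k(\alpha)$, the corresponding term of $\mathsf R_k(\alpha\odot(b))$ is $\Gamma_{\gamma\odot(b)}$, and the corresponding term of $\mathsf R_k(\alpha\cdot(b))$ is $\Gamma_{\gamma\cdot(b)}$. The multiplication rule $\Gamma_\gamma\hrow b=\Gamma_{\gamma\odot(b)}-(q^b-1)\Gamma_{\gamma\cdot(b)}$ then matches these terms exactly.

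\emph{Moves at the last part of $\alpha$.} These are raising $\alpha_s$ (if $s\ge1$) and inserting $k$ right after $\alpha_s$. In $\mathsf R_k(\alpha)$ they give $\gamma=\alpha+k\unit s$ and $\gamma=\alpha\cdot(k)$. Their images under $\odot(b)$ are $\Gamma_{(\alpha+k\unit s)\odot(b)}$ and $\Gamma_{\alpha\cdot(k+b)}$, which match the corresponding terms of $\mathsf R_k(\alpha\odot(b))$. Once again the multiplication rule produces $(q^b-1)\Gamma_{(\alpha+k\unit s)\cdot(b)}$ and $(q^b-1)\Gamma_{\alpha\cdot(k)\cdot(b)}$, which are the matching terms of $(q^b-1)\mathsf R_k(\alpha\cdot(b))$.

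\emph{Leftover terms.} What remains in $\mathsf R_k(\alpha\odot(b))$ is the raise of the merged last part $\alpha_s+b$ (for $s=0$ this part is $\alpha_0$, which cannot be raised, and that case needs a separate check) and the insertion of $k$ after $\alpha_s+b$. The latter is $\Gamma_{\alpha\odot(b)\cdot(k)}$. The terms still to be produced on the $\mathsf R_k(\alpha\cdot(b))$ side are $(q^b-1)\bigl(\Gamma_{\alpha\cdot(b+k)}+\Gamma_{\alpha\cdot(b)\cdot(k)}\bigr)$. The second identity of Lemma~\ref{lem:tworec} rewrites $\Gamma_{\alpha\odot(b)\cdot(k)}$ as
\[
q^b\Gamma_{\alpha\cdot(b+k)}+(q^b-1)\Gamma_{\alpha\cdot(b)\cdot(k)}+\Gamma_\alpha\Gamma_{(b,k)}.
\]
Its last term cancels the $-\Gamma_\alpha\Gamma_{(b,k)}$, and its middle term is what we want. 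We are left with $q^b\Gamma_{\alpha\cdot(b+k)}$ plus the raise of $\alpha_s+b$, namely $\Gamma_{(\alpha+k\unit s)\odot(b)}=\Gamma_{\alpha\odot(b+k)}$, which must combine to give $(q^b-1)\Gamma_{\alpha\cdot(b+k)}$.

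This is where the accounting is delicate. I would fix it by not counting the raise of $\alpha_s$ in $\mathsf R_k(\alpha)$ separately from the merged raise, since $(\alpha+k\unit s)\odot(b)=\alpha\odot(b+k)$ is literally the merged raise. So the merged raise in $\mathsf R_k(\alpha\odot(b))$ is consumed by the raise-$\alpha_s$ term of $\mathsf R_k(\alpha)\hrow b$, and the multiplication rule applied to that term leaves an extra $-(q^b-1)\Gamma_{(\alpha+k\unit s)\cdot(b)}$. Rerunning the bookkeeping so that each element of $Q_k(\alpha\odot(b))$ is paired with exactly one element of $Q_k(\alpha)$ or with the Lemma~\ref{lem:tworec} correction is the main obstacle. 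I expect a clean bijection of multisets, namely $Q_k(\alpha\odot(b))\leftrightarrow\{\gamma\odot(b):\gamma\in Q_k(\alpha)\}\sqcup\{\alpha\odot(b)\cdot(k),\ \alpha\odot(b+k)\}$ up to one overlap, to make all terms cancel. The case $s=0$ will be verified separately, using only the two identities of Lemma~\ref{lem:tworec} and Example~\ref{eg:gammathree}.
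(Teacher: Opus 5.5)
Your setup matches the paper's: induction on the number of parts, the first identity of Lemma~\ref{lem:tworec}, and the reduction to
\[
(q^b-1)\,\mathsf R_k(\alpha\cdot(b))=\mathsf R_k(\alpha\odot(b))-\mathsf R_k(\alpha)\,\hrow b-\Gamma_\alpha\Gamma_{(b,k)}.
\]
However, you never prove this identity, and your term matching for it is wrong in two places.

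First, in the paragraph on moves at the last part, you say the image $\alpha\cdot(k)\odot(b)=\alpha\cdot(k+b)$ of the insertion after $\alpha_s$ matches the corresponding term of $\mathsf R_k(\alpha\odot(b))$. The corresponding move on $\alpha\odot(b)$, inserting $k$ after the last part, gives $\alpha\odot(b)\cdot(k)$, not $\alpha\cdot(k+b)$. In $\alpha\cdot(k)\odot(b)$ the part $b$ merges into the inserted $k$ rather than into $\alpha_s$.

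Second, you list the raise of $\alpha_s+b$ as a leftover, although you had just paired it with $(\alpha+k\unit s)\odot(b)$.

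Together these slips produce the false requirement $q^b\Gamma_{\alpha\cdot(b+k)}+\Gamma_{\alpha\odot(b+k)}=(q^b-1)\Gamma_{\alpha\cdot(b+k)}$. The bijection you conjecture cannot hold either. Both $Q_k(\alpha)$ and $Q_k(\alpha\odot(b))$ have $2s+1$ elements, so adjoining two compositions to one side breaks the count even after discarding an overlap. Moreover, for $s\ge1$ the composition $\alpha\odot(b+k)=(\alpha+k\unit s)\odot(b)$ is already one of the images.

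The missing step is the multiset identity
\[
\{\gamma\odot(b):\gamma\in Q_k(\alpha)\}\sqcup\{\alpha\odot(b)\cdot(k)\}
=Q_k(\alpha\odot(b))\sqcup\{\alpha\cdot(b+k)\}.
\]
The map $\gamma\mapsto\gamma\odot(b)$ sends each move on $\alpha$ to the same move on $\alpha\odot(b)$, with raising $\alpha_s$ going to raising $\alpha_s+b$. The one exception is that inserting $k$ after $\alpha_s$ goes to $\alpha\cdot(b+k)$, and inserting $k$ after the merged last part is never hit.

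Expand each $\Gamma_\gamma\hrow b$ by the multiplication rule. The identity then reduces the right side of the target to
\[
(q^b-1)\sum_{\gamma\in Q_k(\alpha)}\Gamma_{\gamma\cdot(b)}
+\Gamma_{\alpha\odot(b)\cdot(k)}-\Gamma_{\alpha\cdot(b+k)}-\Gamma_\alpha\Gamma_{(b,k)}.
\]
The second identity of Lemma~\ref{lem:tworec} turns the last three terms into $(q^b-1)\bigl(\Gamma_{\alpha\cdot(b+k)}+\Gamma_{\alpha\cdot(b)\cdot(k)}\bigr)$. These are exactly the terms of $(q^b-1)\mathsf R_k(\alpha\cdot(b))$ not of the form $(q^b-1)\Gamma_{\gamma\cdot(b)}$.

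So your leftover $q^b\Gamma_{\alpha\cdot(b+k)}$ should combine with the unmatched $-\Gamma_{\alpha\cdot(b+k)}$ coming from $-\Gamma_{\alpha\cdot(k)}\hrow b$, not with a raise. This argument is uniform in $s$. For $s=0$ the only move is the exceptional one, so the separate check you planned is unnecessary.
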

\begin{proof}
We argue by induction on the number of parts of $\alpha$. 
The case where $\alpha$ has a single part is easily checked. 
A composition with at least two parts has the form $\alpha\cdot(b)$ with
$b\ge1$.
By the first
identity in Lemma~\ref{lem:tworec} and the induction hypothesis applied to
$\alpha$ and to $\alpha\odot(b)$, it therefore suffices to prove that 
\[
(q^b-1)\mathsf R_k(\alpha\cdot(b))
=\mathsf R_k(\alpha\odot(b))-\mathsf R_k(\alpha)\hrow b
-\Gamma_\alpha\Gamma_{(b,k)}.
\]
To this end observe that 
\[
\mathsf R_k(\alpha\cdot(b))=
\sum_{\gamma\in Q_{k}(\alpha)}\Gamma_{\gamma\cdot(b)}
+\Gamma_{\alpha\cdot(b+k)}+\Gamma_{\alpha\cdot(b)\cdot(k)}.
\]
Multiplying by $(q^{b}-1)$ and applying Definition~\ref{def:Gamma} to each $\Gamma_{\gamma\cdot (b)}$ one obtains
\begin{align}\label{eq:one_step_closer}
    (q^b-1)\mathsf R_k(\alpha\cdot(b))=
\sum_{\gamma\in Q_{k}(\alpha)}\Gamma_{\gamma\odot(b)}-\mathsf R_k(\alpha)\hrow b
+(q^b-1)\Gamma_{\alpha\cdot(b+k)}+(q^b-1)\Gamma_{\alpha\cdot(b)\cdot(k)}.
\end{align}
Next observe that
\[
\sum_{\gamma\in Q_k(\alpha)}\Gamma_{\gamma\odot(b)}
=\mathsf R_k(\alpha\odot(b))-\Gamma_{\alpha\odot(b)\cdot(k)}
 +\Gamma_{\alpha\cdot(b+k)}.
\]
Substituting in \eqref{eq:one_step_closer} and then applying the second
identity in Lemma~\ref{lem:tworec} yields the claim.
\end{proof}


\begin{rem}\label{rem:Enk}
The family $\Gamma_\alpha$ refines a classical decomposition of $\nabla\mathsf e_n$
at $t=1$. Following \cite[\S5]{HMZ} define $E_{n,k}\in\Lambda_{\mathbb Q(q)}$ by
\[
\mathsf e_n\Big[X\,\frac{1-z}{1-q}\Big]=\sum_{k=1}^{n}\frac{(z;q)_k}{(q;q)_k}\,E_{n,k},
\]
where $(z;q)_k=(1-z)(1-qz)\cdots(1-q^{k-1}z)$. Setting $z=q$ gives
$\mathsf e_n=\sum_k E_{n,k}$. By \cite[Cor.~5.5]{HMZ} and the compositional shuffle
theorem \cite[Thm.~7.5]{CM}, $\nabla E_{n,k}$ is the sum of
$t^{\mathrm{area}(\pi)}F_\pi(X;q)$ over the Dyck paths $\pi$ of size $n$ that return
to the diagonal $k$ times, as conjectured in \cite[Conj.~6.25]{Hag08}. Here $F_\pi$ is
the path symmetric function of \cite[(6.8)]{Hag08}. Under the encoding of
Remark~\ref{rem:normalization} $F_\pi$ becomes $\LLT(T)$ and the returns of $\pi$ to
the diagonal correspond to the children of the root of $T$. At $t=1$
Theorem~\ref{thm:bfsdfs} therefore gives for $1\le k\le n$
\[
\nabla E_{n,k}\big|_{t=1}=\sum_{\substack{\alpha\vDash n\\ \alpha_0=k}}\Gamma_\alpha .
\]
Summing over $k$ recovers the Shuffle theorem at $t=1$ in the form
$\nabla\mathsf e_n|_{t=1}=\sum_{\alpha\vDash n}\Gamma_\alpha$. This refinement differs
from that of \cite{HMZ}, which records where $\pi$ touches the diagonal. For instance
every Dyck path contributing to $\nabla E_{4,1}$ touches the diagonal only at its
endpoints, whereas
$\nabla E_{4,1}|_{t=1}=\Gamma_{(1,3)}+\Gamma_{(1,2,1)}+\Gamma_{(1,1,2)}+\Gamma_{(1,1,1,1)}$.
We are grateful to Mike Zabrocki for discussions that lead to this observation.
\end{rem}

\section{An LLT expansion of \texorpdfstring{$\Gamma_\alpha$}{Gamma} over plane trees}\label{sec:gammatree}
The central theorem of this section is the following LLT expansion, which we shall establish after developing the necessary combinatorics.
\begin{thm}\label{thm:bfsdfs}
For every nonempty composition $\alpha$ we have
\[
\sum_{\bfsdeg(T)=\alpha}\LLT(T)\;=\;\Gamma_\alpha.
\]
\end{thm}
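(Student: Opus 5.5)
We will prove Theorem~\ref{thm:bfsdfs} by induction on the number of parts of $\alpha$, showing that the tree sums $L_\alpha\coloneqq\sum_{\bfsdeg(T)=\alpha}\LLT(T)$ satisfy the defining recursion of Definition~\ref{def:Gamma}. Since that recursion determines $\Gamma_\alpha$ uniquely, this suffices. The base case is $\alpha=(n)$. Here the only tree is the star with $n$ leaf children of the root. All active vertices have depth $1$, so there are no vertical runs of length $>1$ and no secondary pairs, and primary pairs are exactly the coinversions of the color word. By \eqref{eq:rowcoinv}, $L_{(n)}=\hrow n$.

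For the inductive step we must show
\[
(q^b-1)L_{\alpha\cdot(b)}=L_{\alpha\odot(b)}-L_\alpha\,\hrow b .
\]
The key structural observation concerns the last non-leaf vertex $u$ in BFS order.
\begin{itemize}
\item A tree with $\bfsdeg=\alpha\cdot(b)$ is obtained from a tree $T'$ with $\bfsdeg(T')=\alpha$ by choosing one of its leaves $u$ at the deepest allowed BFS position (after the last non-leaf in BFS order) and attaching $b$ leaf children to it.
\item A tree with $\bfsdeg=\alpha\odot(b)$ is obtained from $T'$ by giving its last BFS non-leaf $b$ additional leaf children, appended after its existing ones.
\end{itemize}
Correspondingly, $L_\alpha\hrow b$ is the LLT of $T'$ together with a disjoint ``row'' of $b$ new leaves whose colors interact with nothing. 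Thus $L_\alpha\hrow b$ is a sum over pairs (coloring of $T'$, word of length $b$) weighted by $q^{\mathrm{dinv}+\mathrm{coinv}}$.

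Fixing the coloring $\mathbf c$ of $T'$ and the content $\beta$ of the new block of $b$ colors, we will compare the $q$-weights of the three ways of placing the block.
\begin{itemize}
\item As free leaves: the weight is $C(\beta)$.
\item Appended to the last non-leaf $v$: the new vertices sit at depth $d=\mathrm{depth}(v)+1$, after all other depth-$d$ vertices in DFS order up to the vertices following them. They produce primary pairs with earlier same-depth vertices and secondary pairs with later vertices at depth $d-1$.
\item Hung below a leaf $u$: the first new child must exceed $c(u)$ by the strict-rise condition, which produces the restricted sums $C(\beta,r)$.
\end{itemize}
The extra dinv created by the block depends only on how many previously placed colors at the relevant depths are smaller or larger than each new color. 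Summing over the admissible leaves $u$ (those between $v$'s children and the end in BFS order), these contributions should telescope, exactly as in the proof of Lemma~\ref{lem:firstletter}. There the first-letter sum $\sum_i q^{\sum_{j>i}\beta_j}(\cdots)$ collapses to a difference of powers. Concretely, we expect the sum over candidate parents $u$ to produce factors $q^{\#\{\text{block colors}>c(u)\}}C(\beta,c(u))$-type terms, and Lemma~\ref{lem:firstletter} converts each $(q^b-1)C(\beta,r)$ into $(q^{m_r}-1)C(\beta)$. The resulting telescoping sum over consecutive candidate parents then leaves the boundary terms: one equals the $\odot$ configuration, and the other equals the free configuration $C(\beta)$ with no extra dinv.

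The main obstacle is this middle step: setting up the dinv bookkeeping so that the extra inversions contributed by the new block, relative to each candidate parent leaf $u$ in BFS order, differ between consecutive candidates by exactly $q^{\sum_{i>r}\beta_i}$ with $r=c(u)$. This requires care about the DFS ordering, since hanging the block under different leaves interleaves it differently with deeper and shallower vertices. We will first try to verify it by ordering candidate leaves by BFS, which at a fixed depth coincides with the DFS order of that depth. We will also use the fact that only depths $d-1$ and $d$ interact with the block, since there are no vertices deeper than $d$ after $v$. The small case $\Gamma_{(1,2,1)}$ of Example~\ref{eg:gammathree} serves as a check.
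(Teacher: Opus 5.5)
Your plan follows the paper's proof: the same induction on the number of parts, the same two bijections onto trees with BFS degree words $\alpha\odot(b)$ and $\alpha\cdot(b)$, and the same telescoping through Lemma~\ref{lem:firstletter}, with boundary terms $q^{d_0}$ (the $\odot$ tree) and $q^{0}$ (the free row $\LLT(T)\,\hrow b$). What is missing is the step you flag as the main obstacle. It is the one genuinely combinatorial claim in the argument, and you only conjecture its outcome.

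Here is what closes it. Let $u_0$ be the last non-leaf, of depth $h$, and list the candidates $u_1,\dots,u_L$ in BFS order. These are all active vertices after $u_0$ in BFS order: the depth-$h$ vertices to the right of $u_0$, followed by \emph{every} depth-$(h+1)$ vertex. Your parenthetical ``between $v$'s children and the end'' would omit the former and the children of earlier depth-$h$ non-leaves, which conflicts with your own first bullet. No vertex has depth exceeding $h+1$, and DFS and BFS agree within each depth. Check the three placements of the block: under $u_0$ at depth $h+1$, under a depth-$h$ candidate at depth $h+1$, and under a depth-$(h+1)$ candidate at depth $h+2$. In each case, in $T_j$ a block vertex forms a pair with an old vertex exactly when that vertex is some $u_i$ with $i>j$ and the block color exceeds $c_{u_i}$. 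Old vertices at the block's depth all precede it and give primary pairs. Old vertices one level up that follow it give secondary pairs. In both cases the condition is that the block color is the larger one.

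Consequently the extra dinv in $T_j$ is $d_j=\sum_{i>j}m_i$, where $m_i=\#\{\text{block letters}>c_{u_i}\}$; this depends only on the content $\beta$. The term attached to $u_j$ is therefore $q^{d_j}\,C(\beta,c_{u_j})$, governed by the \emph{later} candidates. It is not $q^{m_j}C(\beta,c_{u_j})$ as in your guess: $q^{m_j}$ enters only as the ratio $q^{d_{j-1}-d_j}$, which is exactly the consecutive-difference property you need. Since $d_L=0$, we have $q^{d_0}-1=\sum_{j=1}^L q^{d_j}(q^{m_j}-1)$. Lemma~\ref{lem:firstletter} with $r=c_{u_j}$ then gives $\LLT(T_0)=(q^b-1)\sum_{j=1}^L\LLT(T_j)+\LLT(T)\,\hrow b$.

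For validity, appending to $u_0$ creates no new strict rise, because the existing children of $u_0$ are leaves. Hanging the block under $u_j$ creates exactly one new rise, from $u_j$ to the first block vertex. With these points filled in, your argument is complete and coincides with the paper's.
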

To illustrate we revisit the case $\alpha=(1,2,1)$.
There are exactly two trees with BFS degree word equaling $(1,2,1)$, as shown in Figure~\ref{fig:gammatrees}.
Their LLT polynomials are respectively
\[
q^{2}\,\mathsf s_{1111}+q\,\mathsf s_{211}
\qquad\text{and}\qquad
\mathsf s_{22}+\mathsf s_{211}+q\,\mathsf s_{1111}.
\]
Their sum is the Schur expansion computed in Example~\ref{eg:gammathree}.

\begin{figure}[ht]
\centering
\includegraphics{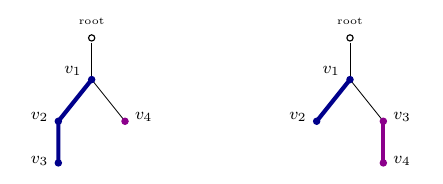}
\caption{The two trees with BFS degree word $(1,2,1)$.}
\label{fig:gammatrees}
\end{figure}

Fix a plane rooted tree $T$ with $\bfsdeg(T)=\alpha$ and let $u_0$ be its
last non-leaf in BFS order, with the root included among the non-leaves.
Write $h$ for the depth of $u_0$ and $\mathrm{Lv}(T)$ for the set of
leaves occurring after $u_0$ in BFS order.

An active vertex of depth greater than $h+1$ would have a non-leaf parent
after $u_0$ in BFS order, so $T$ has depth at most $h+1$.
All vertices at depth $h+1$ are leaves, and at least one exists because
$u_0$ has a child. Thus $\mathrm{Lv}(T)$ consists of the depth-$(h+1)$ leaves
together with the active vertices of depth $h$ strictly to the right of $u_0$.
We list
$\mathrm{Lv}(T)=\{u_1,\dots,u_L\}$ in BFS order.

For $b\ge1$ let $T_0$ be obtained by appending $b$ new children to $u_0$
after all of its existing children. For each $1\le j\le L$ instead give
the leaf $u_j$ exactly $b$ children to obtain $T_j$, as in
Figure~\ref{fig:blockinsert}. In each case the original vertices and child
orders are unchanged, and we call the $b$ new vertices the \emph{block}.
The resulting BFS degree words are
\[
  \bfsdeg(T_0)=\alpha\odot(b) \qquad\text{and}\qquad
  \bfsdeg(T_j)=\alpha\cdot(b) \quad\text{ for }1\le j\le L.  
\]
These BFS degree words give the two composition indices in the recursion
of Definition~\ref{def:Gamma}.

\begin{figure}[ht]
\centering
\includegraphics{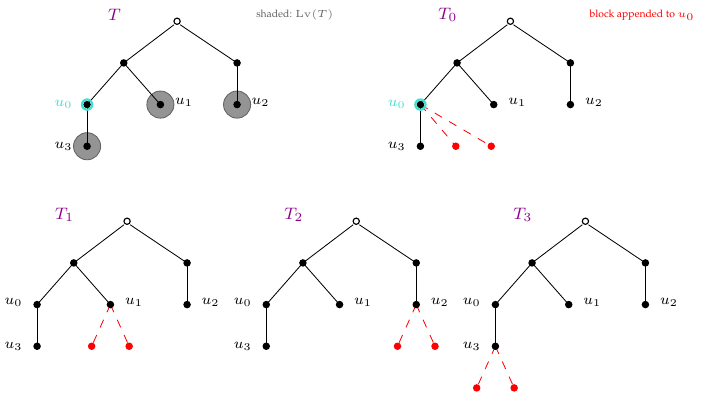}
\caption{The trees $T$ and $T_0$ through $T_3$ contributing to \eqref{eq:blockinsert} for $b=2$.}
\label{fig:blockinsert}
\end{figure}

Now fix a valid coloring $\mathbf c$ of the active vertices of $T$
and a word $\mathbf w=(w_1,\dots,w_b)$ of positive integers.
For an active vertex $v=v_i$ we write $c_v=c_i$.
We use $\mathbf w$ to color the block of each $T_j$ from left to right,
though the resulting colorings need not be valid.
Set
\[
m_j=\#\{i\le b:\ w_i>c_{u_j}\}\quad(1\le j\le L),
\qquad
d_j=\sum_{i>j}m_i\quad(0\le j\le L),
\]
so that $d_L=0$ and $d_{j-1}=m_j+d_j$. 
Both the $m_j$ and the $d_j$ depend only on the content of $\mathbf w$,
while its arrangement enters only through $\mathrm{coinv}(\mathbf w)$
and the first letter $w_1$.

We claim that in $T_j$ a block vertex forms a pair with an old vertex $v$
exactly when $v=u_i$ for some $i>j$ and the block color exceeds $c_v$.
Recall that $T$ has depth at most $h+1$, and note that DFS and BFS order
agree on vertices of equal depth. If $j=0$ or $u_j$ has depth $h$, the
block lies at depth $h+1$. Every old vertex of depth $h+1$ has its parent
no later than $u_0$, so it precedes the block in DFS order and forms primary
pairs with it. The old vertices of depth $h$ that follow the block are those
to the right of $u_j$, and they form secondary pairs with it. If $u_j$ has
depth $h+1$, the block lies at depth $h+2$, and only the old vertices of
depth $h+1$ to the right of $u_j$ form pairs with it, all secondary. In
every case a pair occurs exactly when the block color is the larger one, and
by the BFS listing of $\mathrm{Lv}(T)$ the old vertices involved are
exactly the $u_i$ with $i>j$.

Hence the old pairs, the pairs within the block and the mixed pairs
contribute $\mathrm{dinv}(\mathbf c)$, $\mathrm{coinv}(\mathbf w)$ and
$\sum_{i>j}m_i=d_j$ respectively. The only new rise in depth is from $u_j$
to the first block vertex when $j\ge1$, and we obtain the following.
\begin{enumerate}[label=\textup{(\roman*)},ref=\textup{(\roman*)}]
\item\label{it:blockvalid} The resulting coloring of $T_0$ is always valid,
and for $1\le j\le L$ the resulting coloring of $T_j$ is valid if and only
if $w_1>c_{u_j}$.
\item\label{it:blockdinv} In either case the diagonal inversion number of
the resulting coloring of $T_j$ is
\begin{equation}\label{eq:blockdinv}
\mathrm{dinv}(\mathbf c)+\mathrm{coinv}(\mathbf w)+d_j .
\end{equation}
\end{enumerate}
Figure~\ref{fig:blockdinv} carries out this count on an example.

\begin{figure}[ht]
\centering
\includegraphics{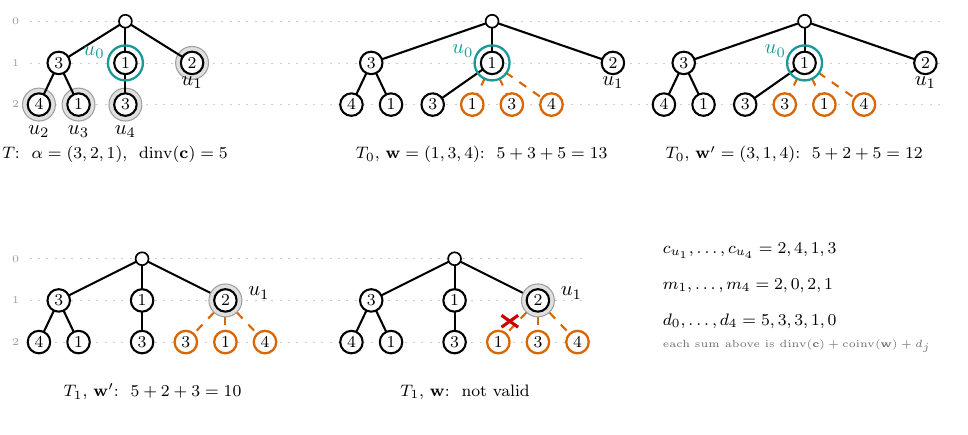}
\caption{The count \eqref{eq:blockdinv} for $b=3$, with one valid
coloring $\mathbf c$ of $T$ and the two block words $\mathbf w=(1,3,4)$ and
$\mathbf w'=(3,1,4)$.}
\label{fig:blockdinv}
\end{figure}

\begin{proof}[Proof of Theorem~\ref{thm:bfsdfs}]
The following identity is the key:
\begin{equation}\label{eq:blockinsert}
\LLT(T_0)=(q^b-1)\sum_{j=1}^{L}\LLT(T_j)+\LLT(T)\,\hrow b.
\end{equation}
 Let us first see how it implies the theorem.
For a one-part composition $(n)$, the unique tree is the $n$-tree whose
components are single leaves and its
LLT polynomial is $\hrow n$ by \eqref{eq:rowcoinv}.
Now fix $b\ge1$ and sum \eqref{eq:blockinsert} over the trees with BFS degree
word $\alpha$, forming $T_0,\dots,T_L$ from each tree. The map
$T\mapsto T_0$ is a bijection onto
the trees with BFS degree word $\alpha\odot(b)$.
Its inverse removes the final $b$ children of the last non-leaf,
all of which are leaves because there is no later non-leaf.
Similarly, $(T,j)\mapsto T_j$ is a bijection
from the pairs with $1\le j\le L$ onto the trees with BFS degree word
$\alpha\cdot(b)$.
Its inverse removes the $b$ children of the last non-leaf $u_j$, leaving
$u_j$ as a leaf after the last non-leaf of the resulting tree in BFS order.
The tree sums therefore satisfy the recursion in Definition~\ref{def:Gamma},
and induction on the number of parts proves the theorem.

It remains to prove \eqref{eq:blockinsert}.
By \ref{it:blockvalid}, \eqref{eq:blockdinv} and \eqref{eq:rowcoinv},
\begin{align*}
\LLT(T_0)
&=\sum_{\mathbf c\in\mathrm{Col}(T_0)}\wt(\mathbf c)
=\sum_{\mathbf c'\in\mathrm{Col}(T)}\wt(\mathbf c')
\sum_{|\beta|=b}q^{d_0}\,\mathbf x^{\beta}\,C(\beta)\\
&=\LLT(T)\,\hrow b
+\sum_{\mathbf c'\in\mathrm{Col}(T)}\wt(\mathbf c')
\sum_{|\beta|=b}\bigl(q^{d_0}-1\bigr)\mathbf x^{\beta}\,C(\beta).
\end{align*}
Since $d_L=0$ and $d_{j-1}-d_j=m_j$, Lemma~\ref{lem:firstletter} with
$r=c_{u_j}$ gives
\[
\bigl(q^{d_0}-1\bigr)C(\beta)
=\sum_{j=1}^{L}q^{d_j}\bigl(q^{m_j}-1\bigr)C(\beta)
=(q^b-1)\sum_{j=1}^{L}q^{d_j}\,C(\beta,c_{u_j}).
\]
Substituting, and applying \ref{it:blockvalid} and \eqref{eq:blockdinv}
once more,
\begin{equation*}
\LLT(T_0)-\LLT(T)\,\hrow b
=(q^b-1)\sum_{j=1}^{L}\sum_{\mathbf c'\in\mathrm{Col}(T)}\wt(\mathbf c')
\sum_{|\beta|=b}q^{d_j}\,\mathbf x^{\beta}\,C(\beta,c_{u_j})
=(q^b-1)\sum_{j=1}^{L}\LLT(T_j).\qedhere
\end{equation*}
\end{proof}


\section{LLT expansions of Theta operators on single rows}\label{sec:expansions}

By Propositions~\ref{prop:spec} and~\ref{prop:insertion},
$\Th_{\mathsf p_\mu}\hrow a|_{t=1}=D_{\mu_1}\cdots D_{\mu_r}\hrow a$
is a sum of $\Gamma$ functions.
Theorem~\ref{thm:bfsdfs} expands each of these in vertical-strip LLT polynomials,
so it remains to organize the resulting terms by ordered set partitions.


\begin{defn}
    An
\emph{ordered set partition} of $[r]$ is a tuple $\mathcal B=(B_1,\dots,B_s)$
of nonempty pairwise disjoint subsets with union $[r]$. Write $\OP_r$ for
the set of ordered set partitions.
\end{defn}
Given a composition $\mu=(\mu_1,\dots,\mu_r)$ and $B\subseteq [r]$ we set $\mu(B)=\sum_{i\in B}\mu_i$. 
For $a\ge1$, define
\begin{equation}\label{eq:setcomp}
\mathsf K^{(a)}_\mu
\;\coloneqq\;
\sum_{\mathcal B=(B_1,\dots,B_s)\in\OP_r}
\Gamma_{(a,\,\mu(B_1),\dots,\mu(B_s))},
\qquad
\mathsf K^{(a)}_\varnothing=\Gamma_{(a)}=\hrow a.
\end{equation}
The function $\mathsf K^{(a)}_\mu$ depends only on the multiset of entries of $\mu$
and hence on the underlying partition.
The ordered set partitions record the successive insertions of
Proposition~\ref{prop:insertion}.
The block containing $i$ records which part receives $\mu_i$,
while the order of the blocks records the positions of the inserted parts.
The next result establishes this interpretation and proves
Theorem~\ref{thm:mainLLT} from the introduction.

\begin{thm}\label{thm:LLTseeded}
For every partition $\mu=(\mu_1,\dots,\mu_r)$ and every $a\ge1$, we have 
\[
\Th_{\mathsf p_\mu}\hrow a\big|_{t=1}
=\mathsf K^{(a)}_\mu
=\sum_{\mathcal B=(B_1,\dots,B_s)\in\OP_r}
  \ \sum_{\bfsdeg(T)=(a,\mu(B_1),\dots,\mu(B_s))}
  \LLT(T).
\]
\end{thm}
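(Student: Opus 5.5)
By Proposition~\ref{prop:spec}, $\Th_{\mathsf p_\mu}\hrow a|_{t=1}=D_{\mu_1}\circ\cdots\circ D_{\mu_r}\hrow a$, since $\hrow a$ is regular at $t=1$ and lies in $\Lambda^+$. The second equality in the statement follows immediately from the definition \eqref{eq:setcomp} of $\mathsf K^{(a)}_\mu$ and Theorem~\ref{thm:bfsdfs} applied term by term. So the content is the identity $D_{\mu_1}\cdots D_{\mu_r}\hrow a=\mathsf K^{(a)}_\mu$. The derivations commute by Lemma~\ref{lem:Dkder}, so the order of application is immaterial. It is therefore enough to show, for every composition $\mu=(\mu_1,\dots,\mu_r)$ (not necessarily a partition) and every $k\ge1$, that
\[
D_k\,\mathsf K^{(a)}_{\mu}=\mathsf K^{(a)}_{\mu\cdot(k)}.
\]
The claim then follows by induction on $r$, starting from $\mathsf K^{(a)}_\varnothing=\hrow a$.

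To prove this step, apply Proposition~\ref{prop:insertion} to each summand $\Gamma_{(a,\mu(B_1),\dots,\mu(B_s))}$. This gives $\mathsf R_k$ of that composition, namely a sum of two kinds of terms:
\begin{enumerate}[label=\textup{(\alph*)}]
\item adding $k$ to the part $\mu(B_j)$ for some $1\le j\le s$;
\item inserting a new part $k$ immediately after one of the $s+1$ parts $a,\mu(B_1),\dots,\mu(B_s)$.
\end{enumerate}
Next, build a bijection from pairs $(\mathcal B,\text{move})$ to ordered set partitions $\mathcal B'$ of $[r+1]$, where the new element $r+1$ carries weight $\mu_{r+1}=k$:
\begin{enumerate}[label=\textup{(\alph*)}]
\item adding $k$ to the block $B_j$ corresponds to $\mathcal B'=(B_1,\dots,B_j\cup\{r+1\},\dots,B_s)$;
\item inserting $k$ after position $j\in\{0,\dots,s\}$ corresponds to $\mathcal B'=(B_1,\dots,B_j,\{r+1\},B_{j+1},\dots,B_s)$.
\end{enumerate}
In each case the composition produced by the move is exactly $(a,\mu(B'_1),\dots,\mu(B'_{s'}))$.

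The bijection is clear. Given $\mathcal B'\in\OP_{r+1}$, the element $r+1$ either shares a block with other elements, in which case deleting it gives a type (a) preimage, or it forms a singleton block, in which case removing that block gives a type (b) preimage with a well-defined insertion position. Multiplicities in the multiset $Q_k$ are respected because the moves are indexed by positions, not by the resulting compositions. The first part $a$ is never altered, which matches the fact that $Q_k$ leaves $\alpha_0$ fixed.

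The main point needing care is that Proposition~\ref{prop:spec} and Theorem~\ref{thm:Dm} are stated for partitions. This is handled by the commutativity of the $D_k$, which lets us insert the parts of $\mu$ in any order, together with the observation already made in the text that $\mathsf K^{(a)}_\mu$ depends only on the multiset of parts. Beyond that, the argument is bookkeeping.
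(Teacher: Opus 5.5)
Your proposal is correct and matches the paper's proof essentially step for step. Like the paper, you prove $D_k\mathsf K^{(a)}_\mu=\mathsf K^{(a)}_{\mu\cdot(k)}$ via Proposition~\ref{prop:insertion} and the adjoin-to-a-block or insert-a-singleton bijection on ordered set partitions, iterate from $\mathsf K^{(a)}_\varnothing=\hrow a$ using commutativity of the $D_k$, invoke Proposition~\ref{prop:spec} for the first equality, and apply Theorem~\ref{thm:bfsdfs} termwise for the second (your concern about partitions versus compositions is harmless but unnecessary, since Proposition~\ref{prop:spec} is only used for the final partition $\mu$ and $\mathsf K^{(a)}$ is defined for arbitrary compositions).
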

\begin{proof}
We first show that for every $k\ge1$, we have
\begin{equation}\label{eq:seeded}
D_k\,\mathsf K^{(a)}_\mu=\mathsf K^{(a)}_{(\mu_1,\dots,\mu_r,k)} ,
\qquad\text{hence}\qquad
\mathsf K^{(a)}_\mu=D_{\mu_1}\!\cdots D_{\mu_r}\hrow a .
\end{equation}
Given $(B_1,\dots,B_s)\in \OP_r$, we obtain $2s+1$ elements of $\OP_{r+1}$
by adjoining $r+1$ to one of the $s$ blocks or inserting it as a singleton
in one of the $s+1$ positions.
Every element of $\OP_{r+1}$ has a unique predecessor obtained by deleting
$r+1$ and discarding its block if it becomes empty.
Write $\nu=(\mu_1,\dots,\mu_r,k)$, so that
$\nu(B\cup\{r+1\})=\mu(B)+k$ and $\nu(\{r+1\})=k$.
Applying Proposition~\ref{prop:insertion} to each term of \eqref{eq:setcomp} then gives
\[
D_k\,\mathsf K^{(a)}_\mu
=\sum_{(B_1,\dots,B_s)\in\OP_r}\mathsf R_k\big(a,\mu(B_1),\dots,\mu(B_s)\big).
\]
The terms of $\mathsf R_k(a,\mu(B_1),\dots,\mu(B_s))$ either add $k$ to some
$\mu(B_i)$ or insert $k$ as a new part in one of the $s+1$ positions after $a$.
These correspond to adjoining $r+1$ to $B_i$ and to inserting $\{r+1\}$ as a
singleton block in that position, so the right-hand side equals $\mathsf{K}^{(a)}_{\nu}$.
Iterating from
$\mathsf K^{(a)}_\varnothing=\hrow a$ gives
$\mathsf K^{(a)}_\mu=D_{\mu_r}\cdots D_{\mu_1}\hrow a$.
We may reverse the order of the $D_k$ because they commute by Lemma~\ref{lem:Dkder}.

Applying Theorem~\ref{thm:bfsdfs} to each term of \eqref{eq:setcomp} shows that the double sum in the statement is $\mathsf K^{(a)}_\mu$.
The first equality follows from \eqref{eq:seeded} and Proposition~\ref{prop:spec}.
\end{proof}


\begin{eg}\label{eg:Ksmall}
Take $a=1$ and $\mu=(2,1)$. The ordered set partitions $(\{1\},\{2\})$,
$(\{2\},\{1\})$ and $(\{1,2\})$ of $[2]$ give the compositions $(1,2,1)$,
$(1,1,2)$ and $(1,3)$. The two trees with BFS degree word $(1,2,1)$ are those
of Figure~\ref{fig:gammatrees}, and the remaining two compositions each
index a single tree, shown in Figure~\ref{fig:Ksmall}. Summing LLT
polynomials by BFS degree word as in Theorem~\ref{thm:bfsdfs} gives
$\Gamma_{(1,2,1)}$ as in Example~\ref{eg:gammathree}, together with
\[
\Gamma_{(1,3)}=q^{3}\,\mathsf s_{1111}+(q^{2}+q)\,\mathsf s_{211}+q\,\mathsf s_{22}+\mathsf s_{31},
\qquad
\Gamma_{(1,1,2)}=q\,\mathsf s_{1111}+\mathsf s_{211}.
\]
Hence
\[
\Th_{\mathsf p_{(2,1)}}\hrow 1\big|_{t=1}
=\Gamma_{(1,2,1)}+\Gamma_{(1,1,2)}+\Gamma_{(1,3)}
=(q^{3}+q^{2}+2q)\,\mathsf s_{1111}+(q^{2}+2q+2)\,\mathsf s_{211}+(q+1)\,\mathsf s_{22}+\mathsf s_{31}.
\]
\end{eg}

\begin{figure}[ht]
\centering
\includegraphics{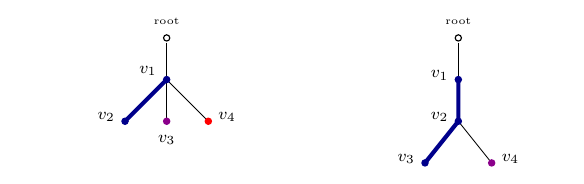}
\caption{The trees with BFS degree words $(1,3)$ and $(1,1,2)$.}
\label{fig:Ksmall}
\end{figure}



We now turn to elementary subscripts, which we will record in the auxiliary
alphabet $Y$ by sibling-increasing labels.

\begin{defn}\label{def:labeltree}
Let $T$ be an $a$-tree with $n+a$ active vertices, and call the $a$
children of the root its \emph{component roots}. A \emph{sibling-increasing
labeling} of $T$ assigns a positive integer to each of the $n$ active
vertices that are not component roots, in such a way that the labels of the
children of a common parent increase strictly from left to right. The
\emph{content} of a labeling is the weak composition
$\beta=(\beta_1,\beta_2,\dots)$ of $n$ in which $\beta_i$ counts the vertices
labeled $i$, and $\LT^{(a)}_\beta$ is the set of sibling-increasing labeled
$a$-trees of content $\beta$.
\end{defn}

Figure~\ref{fig:labeltrees} shows an element of $\LT^{(3)}_{(2,2,2)}$.

\begin{figure}[ht]
\centering
\includegraphics{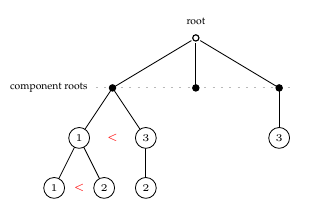}
\caption{A sibling-increasing labeling of a $3$-tree with nine active
vertices.}
\label{fig:labeltrees}
\end{figure}

Definitions~\ref{def:coloring} and~\ref{def:llt} use only depths and DFS order.
For $S\in\LT^{(a)}_\beta$ we therefore write $\LLT(S)$ for the LLT polynomial
of the underlying $a$-tree.
Colors and labels are independent data on the same tree.

\begin{prop}\label{prop:Yexp}
Fix $a\ge1$ and $n\ge1$. In $\widehat{\Lambda(X)\otimes\Lambda(Y)}$ we have
\[
\sum_{\beta}\mathbf y^{\beta}\!\!\sum_{S\in\LT^{(a)}_\beta}\!\!\LLT(S;X)
\;=\;\sum_{T}\mathsf e_{\bfsdeg(T)^\flat}(Y)\,\LLT(T;X)
\;=\;\sum_{\lambda\vdash n}\mathsf m_\lambda(Y)\,\Upsilon^{(a)}_\lambda(X),
\]
where the index $\beta$ runs over the weak compositions of $n$, the tree
$T$ runs over
the $a$-trees with $n+a$ active vertices, and $\bfsdeg(T)^\flat$ is
the BFS degree word with its initial entry $a$ deleted.
\end{prop}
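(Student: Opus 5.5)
The two equalities are of different natures. The first is a purely combinatorial count: the number of sibling-increasing labelings of a fixed tree, weighted by content. The second uses Proposition~\ref{prop:master}, specialization at $t=1$, and Theorem~\ref{thm:LLTseeded}.

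\emph{First equality.} Fix an $a$-tree $T$ with $n+a$ active vertices and group the labeled trees $S\in\LT^{(a)}_\beta$ by their underlying tree. The labeled vertices are exactly the children of active vertices, and they split into sibling groups, one per active non-leaf. A group has size equal to the out-degree of its parent. These out-degrees, listed in BFS order, are precisely the entries of $\bfsdeg(T)^\flat$, since the root's entry $a$ is omitted because component roots carry no labels. Within each group the labels must form a strictly increasing sequence, so a group of size $d$ contributes $\sum_{i_1<\dots<i_d}y_{i_1}\cdots y_{i_d}=\mathsf e_d(Y)$. Different groups are independent. Since $\LLT(S)=\LLT(T)$, summing over labelings gives $\mathsf e_{\bfsdeg(T)^\flat}(Y)\LLT(T;X)$. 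The product of $\mathsf e$'s is commutative, so the order of the word is irrelevant.

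\emph{Second equality.} Every $a$-tree $T$ with $n+a$ active vertices has $\bfsdeg(T)=(a,\gamma_1,\dots,\gamma_s)$ with $\gamma\vDash n$. Conversely, every composition $(a,\gamma)$ of this shape arises from such trees. So by Theorem~\ref{thm:bfsdfs} the middle expression equals $\sum_{\gamma\vDash n}\mathsf e_\gamma(Y)\,\Gamma_{(a)\cdot\gamma}$. I would then compare this with the degree-$n$ part in $Y$ of Proposition~\ref{prop:master} applied to $g=\hrow a$. Proposition~\ref{prop:spec} allows specializing at $t=1$, which gives
\[
\sum_{\lambda\vdash n}\mathsf m_\lambda(Y)\Upsilon^{(a)}_\lambda=\sum_{\mu\vdash n}\varepsilon_\mu z_\mu^{-1}\mathsf p_\mu(Y)\,\mathsf K^{(a)}_\mu
\]
by Theorem~\ref{thm:LLTseeded}. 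Expanding $\mathsf K^{(a)}_\mu$ by \eqref{eq:setcomp} and collecting by the composition $\gamma=(\mu(B_1),\dots,\mu(B_s))$, the coefficient of $\Gamma_{(a)\cdot\gamma}$ becomes
\[
\sum_{\mu}\varepsilon_\mu z_\mu^{-1}\mathsf p_\mu(Y)\,\#\{\mathcal B\in\OP_{\ell(\mu)}:\ (\mu(B_1),\dots)=\gamma\}.
\]
The task is to show this equals $\mathsf e_{\gamma_1}\cdots\mathsf e_{\gamma_s}(Y)$.

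The main obstacle is this last identity. It should be a factorization. Choosing an ordered set partition of the parts of $\mu$ with block sums $\gamma$ amounts to splitting the multiset of parts into an ordered tuple of sub-multisets $\mu^{(1)},\dots,\mu^{(s)}$ with $\mu^{(i)}\vdash\gamma_i$. For fixed sub-partitions, the number of ordered set partitions of $[\ell(\mu)]$ realizing them is $\prod_j m_j(\mu)!/\prod_{i,j}m_j(\mu^{(i)})!$. The statistics $\varepsilon$, $\mathsf p$ and the remaining part of $z$ (namely $\prod_j j^{m_j}$) are all multiplicative over such splittings. Hence the weighted count equals $\prod_i\sum_{\nu\vdash\gamma_i}\varepsilon_\nu z_\nu^{-1}\mathsf p_\nu(Y)=\prod_i\mathsf e_{\gamma_i}(Y)$, using the standard expansion $\mathsf e_d=\sum_{\nu\vdash d}\varepsilon_\nu z_\nu^{-1}\mathsf p_\nu$. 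I would double-check the multinomial bookkeeping carefully, since all the factors of $m_j!$ must cancel exactly. Equivalently, the whole identity can be phrased through the exponential formula $\mathsf E[uY]=\exp(\sum_r(-1)^{r-1}u^r\mathsf p_r/r)$, in the spirit of Remark~\ref{rem:HS}.
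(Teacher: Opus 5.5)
Your proposal is correct and follows the paper's argument: the first equality comes from letting each sibling group of size $d$ contribute $\mathsf e_d(Y)$, and the second from matching the power-sum side of Proposition~\ref{prop:master} at $g=\hrow a$ (specialized via Proposition~\ref{prop:spec} and Theorem~\ref{thm:LLTseeded}) through the coefficient identity $[\mathsf p_\mu]\,\mathsf e_\gamma=\varepsilon_\mu z_\mu^{-1}\,\#\{\mathcal B\in\OP_r:\mu(B_j)=\gamma_j\}$. The only difference is that you prove this identity directly by splitting the parts of $\mu$ into sub-multisets, and your multinomial bookkeeping does cancel exactly to $\prod_i z_{\mu^{(i)}}^{-1}$; the paper instead dualizes the classical ordered-set-partition expansion of $\mathsf p_\mu$ in monomials and applies $\omega$.
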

\begin{proof}
Let $v_1,\dots,v_s$ be the non-leaf active vertices of $T$ in BFS
order and $d_j$ the out-degree of $v_j$, so that
$\bfsdeg(T)^\flat=(d_1,\dots,d_s)$. The labelings of $T$ have generating
function $\prod_j\mathsf e_{d_j}(Y)=\mathsf e_{\bfsdeg(T)^\flat}(Y)$.
Multiplying by $\LLT(T;X)$ and summing over $T$ gives the first equality.

For the second, index the parts of $\mu$ by $[r]$ as in Theorem~\ref{thm:LLTseeded}.
The expansion of power sums in the monomial basis is classically counted by
ordered set partitions \cite[Prop.~7.7.1]{StanEC2}. Dualizing and applying
$\omega$, we obtain
\[
[\mathsf p_\mu]\,\mathsf e_{\bfsdeg(T)^\flat}
=\varepsilon_\mu z_\mu^{-1}
\#\bigl\{(B_1,\dots,B_s)\in\OP_r:\mu(B_j)=d_j
\ \text{ for }1\le j\le s\bigr\}.
\]
Summing over $T$, Theorem~\ref{thm:LLTseeded} identifies the
coefficient of $\mathsf p_\mu(Y)$ in the middle sum with
$\varepsilon_\mu z_\mu^{-1}\mathsf K^{(a)}_\mu
=\varepsilon_\mu z_\mu^{-1}\Th_{\mathsf p_\mu}\hrow a|_{t=1}$. The middle
sum is therefore the power-sum side of Proposition~\ref{prop:master} at $g=\hrow a$,
specialized at $t=1$ as Proposition~\ref{prop:spec} permits and read in
$Y$-degree $n$. By Proposition~\ref{prop:master} it equals the monomial side,
which after the same specialization is
$\sum_{\lambda\vdash n}\mathsf m_\lambda(Y)\,\Upsilon^{(a)}_\lambda(X)$.
\end{proof}

Deleting the zero parts of $\beta$ changes the set $\LT^{(a)}_\beta$ but not
the sum $\sum_{S\in\LT^{(a)}_\beta}\LLT(S)$. The following may therefore be
stated for compositions.

\begin{thm}\label{thm:lltrow}
For every $a\ge1$ and every composition $\beta\vDash n$, we have
\[
\Upsilon^{(a)}_\beta=\Th_{\mathsf e_\beta}\hrow a\big|_{t=1}
\;=\;\sum_{S\in\LT^{(a)}_\beta}\LLT(S).
\]
\end{thm}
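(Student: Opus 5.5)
The plan is to read off Theorem~\ref{thm:lltrow} from Proposition~\ref{prop:Yexp} by extracting coefficients of monomials in $Y$, and then to use symmetry in $Y$ to pass from partitions to compositions. Fix $a\ge1$ and a composition $\beta\vDash n$. The left-hand side of Proposition~\ref{prop:Yexp} is $\sum_{\beta'}\mathbf y^{\beta'}\sum_{S\in\LT^{(a)}_{\beta'}}\LLT(S;X)$, summed over weak compositions $\beta'$ of $n$. Since the monomials $\mathbf y^{\beta'}$ are linearly independent, the coefficient of $\mathbf y^{\beta}$ on that side is exactly $\sum_{S\in\LT^{(a)}_\beta}\LLT(S)$. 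On the right-hand side, the coefficient of $\mathbf y^\beta$ in $\mathsf m_\lambda(Y)$ is $1$ if $\lambda$ is the weakly decreasing rearrangement $\beta^+$ of $\beta$, and $0$ otherwise. Hence that coefficient is $\Upsilon^{(a)}_{\beta^+}$.

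It remains to identify $\Upsilon^{(a)}_{\beta^+}$ with $\Th_{\mathsf e_\beta}\hrow a|_{t=1}$. This holds because $\mathsf e_\beta=\mathsf e_{\beta^+}$ as symmetric functions, the product being commutative. I will therefore take $\Upsilon^{(a)}_\beta$ to mean $\Th_{\mathsf e_\beta}\hrow a|_{t=1}$, in line with the definition in Theorem~\ref{thm:mainlltrow}. The existence of the specialization is supplied by Proposition~\ref{prop:spec}, since $\hrow a$ is regular at $t=1$.

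One point needs attention: the remark preceding the theorem, that deleting zero parts of a weak composition leaves the sum unchanged. Proposition~\ref{prop:Yexp} already covers weak compositions, so no separate argument is needed. Inserting a zero part into $\beta$ shifts label values but preserves the relative order of labels, giving a bijection on $\LT^{(a)}$ that fixes the underlying trees. This shows the sum is also independent of how the zeros are placed.

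There is no real obstacle. The substantive work, namely the identification of the middle sum with the power-sum side through Theorem~\ref{thm:LLTseeded} and Proposition~\ref{prop:master}, is already done in Proposition~\ref{prop:Yexp}. The only care required is to check that the coefficient extraction is legitimate in the completion $\widehat{\Lambda(X)\otimes\Lambda(Y)}$. This is fine because the identity is homogeneous of $Y$-degree $n$ and involves only finitely many terms there.
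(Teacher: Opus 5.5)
Your proposal is correct and follows the paper's own proof: you extract the coefficient of $\mathbf y^{\beta}$ on both sides of Proposition~\ref{prop:Yexp}, observe that on the right it picks out $\Upsilon^{(a)}_\lambda$ for $\lambda$ the decreasing rearrangement of $\beta$, and conclude using $\mathsf e_\beta=\mathsf e_\lambda$. Your additional observation about where zero parts are placed is valid, but the statement does not need it.
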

\begin{proof}
Take the coefficient of $\mathbf y^{\beta}$ in Proposition~\ref{prop:Yexp}. On the left it is
$\sum_{S\in\LT^{(a)}_\beta}\LLT(S)$. On the right, $\mathbf y^{\beta}$
occurs in $\mathsf m_\lambda(Y)$ only for $\lambda$ the decreasing
rearrangement of $\beta$, and there with coefficient $1$, so it is
$\Upsilon^{(a)}_\lambda$. Finally, $\Upsilon^{(a)}_\lambda=\Upsilon^{(a)}_\beta$
because $\mathsf e_\beta=\mathsf e_\lambda$.
\end{proof}

\begin{rem}\label{rem:cauchy}
Applying $\omega_Y$ to Proposition~\ref{prop:Yexp} turns the dual Cauchy
kernel into the ordinary Cauchy kernel
\[
\omega_Y\mathsf E[XY]=\sum_{k\ge0}\mathsf h_k[XY].
\]
Reading the result in different bases of $\Lambda(Y)$ gives expansions for complete,
monomial and Schur subscripts at $t=1$.

The expansion of $\Th_{\mathsf h_\beta}\hrow a|_{t=1}$ is a sum of
$\LLT(S)$ over the labeled $a$-trees of content $\beta$ whose sibling labels
increase \emph{weakly}.
The expansion of $\Th_{\mathsf m_\lambda}\hrow a|_{t=1}$ is the sum of
$\Gamma_{(a)\cdot\alpha}$ over the distinct rearrangements $\alpha$ of $\lambda$.
For Schur subscripts we obtain
\[
\Th_{\mathsf s_\lambda}\hrow a\big|_{t=1}
=\sum_T K_{\lambda,\bfsdeg(T)^\flat}\,\LLT(T),
\]
where $T$ ranges over the $a$-trees with $|\lambda|+a$ active vertices and
$K_{\lambda,\bfsdeg(T)^\flat}$ is the Kostka number.
In particular $\Th_f\hrow a|_{t=1}$ is Schur positive whenever $f$ is.
\end{rem}

\begin{rem}\label{rem:rowproducts}
The product rule in Remark~\ref{rem:HS} extends the single-row expansions to
products of row functions. For every nonempty partition $\mu$ we have
\[
\Th_{\mathsf E[XY]}\hrow\mu\big|_{t=1}
=\prod_i\Th_{\mathsf E[XY]}\hrow{\mu_i}\big|_{t=1}.
\]
This gives expansions of $\Th_{\mathsf e_\lambda}\hrow\mu|_{t=1}$ and
$\Th_{\mathsf p_\nu}\hrow\mu|_{t=1}$ for every such $\mu$.
\end{rem}

\section{Single-row Macdonald cumulants}\label{sec:rowcum}

We now relate the function in Theorem~\ref{thm:LLTseeded} to a single-row
Macdonald cumulant and thereby prove Corollary~\ref{cor:introdolega}.
For $r\ge1$ write $\mathcal P_r$ for the lattice of set partitions of $[r]$
with top element $\hat1$ and M\"obius function $\mu$.
For positive integers $a_1,\dots,a_r$ and $B\subseteq[r]$ set $a(B)=\sum_{i\in B}a_i$.

\begin{defn}[{\cite[Def.~3, (7)]{DKLLT}, \cite[(2), (4)]{DolG}}]\label{def:cumulant}
The \emph{conditional cumulant} and the normalized \emph{$q$-partial
cumulant} associated with $\mathbf a=(a_1,\dots,a_r)$ are
\[
\varkappa(\mathbf a)\coloneqq
\sum_{\sigma\in\mathcal P_r}\mu(\sigma,\hat1)
\prod_{B\in\sigma}\hrow{a(B)},
\qquad
\kappa(\mathbf a)\coloneqq(q-1)^{1-r}\varkappa(\mathbf a).
\]
\end{defn}

The M\"obius coefficient is
$\mu(\sigma,\hat1)=(-1)^{|\sigma|-1}(|\sigma|-1)!$
\cite[Ex.~3.10.4]{StanEC1}. Here $|\sigma|$ denotes the number of blocks of
$\sigma$. Both functions are symmetric in $a_1,\dots,a_r$.
On single rows Do\l{}\k{e}ga's second product $\oplus$ adds row lengths
\cite[\S1]{DolG}. Thus a block $B$ contributes the row function $\hrow{a(B)}$.
The normalized function $\kappa(a_1,\dots,a_r)$ is the Macdonald cumulant
$\kappa((a_1),\dots,(a_r))$ of \cite{DolG}.

\begin{eg}\label{eg:cumsmall}
For $r=2$ the two set partitions of $[2]$ give
$\kappa(a,b)=(\hrow{a+b}-\hrow a\hrow b)/(q-1)$. In particular
$\kappa(1,1)=\mathsf e_2$. For $r=3$ and $a_1=a_2=a_3=1$ we have
\[
\kappa(1,1,1)=\frac{\hrow3-3\hrow2\hrow1+2\hrow1^3}{(q-1)^2}
=\mathsf s_{21}+(q+2)\mathsf s_{111}.
\]
\end{eg}

\begin{thm}\label{thm:Karow}
For all $r\ge1$ and positive integers $a_1,\dots,a_r$ and $k$ we have
\[
(q^k-1)\,D_k\,\varkappa(a_1,\dots,a_r)=\varkappa(a_1,\dots,a_r,k).
\]
Consequently, for every partition $\mu=(\mu_1,\dots,\mu_r)$ with $r\ge0$ and
every $a\ge1$ we have
\[
\frac{\kappa(\mu_1,\dots,\mu_r,a)}{\prod_i[\mu_i]_q}
=D_{\mu_1}\!\cdots D_{\mu_r}\,\hrow a ,
\]
the case $r=0$ reading $\kappa(a)=\hrow a$.
\end{thm}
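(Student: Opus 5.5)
Since $D_k$ is a derivation (Lemma~\ref{lem:Dkder}), the plan is to apply it term by term to the Möbius sum in Definition~\ref{def:cumulant}. For $\sigma\in\mathcal P_r$ the Leibniz rule and Definition~\ref{def:Dk} give
\[
(q^k-1)\,D_k\prod_{B\in\sigma}\hrow{a(B)}
=\sum_{B\in\sigma}\hrow{a(B)+k}\prod_{B'\ne B}\hrow{a(B')}\;-\;|\sigma|\,\hrow k\prod_{B\in\sigma}\hrow{a(B)} .
\]
We then interpret each term as a set partition of $[r+1]$, where the new element $r+1$ carries weight $k$. The first sum runs over the partitions $\tau$ of $[r+1]$ in which $r+1$ is adjoined to an existing block of $\sigma$. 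The second term is $-|\sigma|$ times the partition $\tau=\sigma\cup\{\{r+1\}\}$. Every $\tau\in\mathcal P_{r+1}$ arises from exactly one $\sigma$, namely its restriction to $[r]$.

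Next we collect coefficients. If $r+1$ lies in a non-singleton block of $\tau$, then $|\tau|=|\sigma|$, and the coefficient is $\mu(\sigma,\hat1)=(-1)^{|\tau|-1}(|\tau|-1)!=\mu(\tau,\hat1)$. If $\{r+1\}$ is a singleton block, then $|\tau|=|\sigma|+1$, and the coefficient is
\[
-|\sigma|\,(-1)^{|\sigma|-1}(|\sigma|-1)!=(-1)^{|\tau|-1}(|\tau|-1)!=\mu(\tau,\hat1).
\]
Summing over $\sigma$ therefore gives exactly $\varkappa(a_1,\dots,a_r,k)$, which is the first identity. The only delicate point is this sign and factorial bookkeeping in the singleton case, and it is precisely the $-\ell(\lambda)$ term in Definition~\ref{def:Dk} that makes it work. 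This is the main step, though I expect it to be short.

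For the consequence, induct on $r$. Set $\varkappa(a)=\hrow a$. Since the $D_k$ commute (Lemma~\ref{lem:Dkder}), iterating the first identity gives
\[
D_{\mu_1}\cdots D_{\mu_r}\hrow a=\frac{\varkappa(a,\mu_1,\dots,\mu_r)}{\prod_i(q^{\mu_i}-1)}.
\]
Using the symmetry of $\varkappa$ in its arguments, we replace $\varkappa$ by $(q-1)^{r}\kappa(\mu_1,\dots,\mu_r,a)$. Finally, $(q^{\mu_i}-1)/(q-1)=[\mu_i]_q$, which yields the stated normalization. The case $r=0$ is the definition $\kappa(a)=\varkappa(a)=\hrow a$.
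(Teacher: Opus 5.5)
Your proposal is correct and matches the paper's proof essentially step for step. Both arguments use the same Leibniz expansion of $(q^k-1)D_k\prod_{B\in\sigma}\hrow{a(B)}$, the same bijection onto $\mathcal P_{r+1}$ given by restricting to $[r]$, and the same M\"obius bookkeeping $-|\sigma|\,(-1)^{|\sigma|-1}(|\sigma|-1)!=(-1)^{|\sigma|}|\sigma|!$ for the singleton case. The consequence is likewise obtained by iterating from $\varkappa(a)=\hrow a$ and using the symmetry of $\varkappa$; there the paper applies $D_{\mu_r}$ first, so symmetry of $\varkappa$ alone suffices, whereas your ordering invokes commutativity of the $D_k$, which is equally valid.
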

\begin{proof}
Set $a_{r+1}=k$ and write $\pi_\sigma=\prod_{B\in\sigma}\hrow{a(B)}$.
By Definition~\ref{def:Dk} we have
\[
(q^k-1)D_k\pi_\sigma
=\sum_{B\in\sigma}\pi_{\sigma_B}\;-\;|\sigma|\,\pi_{\sigma_{r+1}},
\]
where $\sigma_B$ is obtained by adding $r+1$ to the block $B$
and $\sigma_{r+1}$ by adding $\{r+1\}$ as a new block.
Every set partition of $[r+1]$ has exactly one of these forms,
and deleting the new element recovers a unique partition $\sigma$.
It remains to compare the M\"obius coefficients in these two cases.

Adding $r+1$ to an existing block preserves the number of blocks,
so $\pi_{\sigma_B}$ already carries the required coefficient.
Adding $r+1$ as a singleton increases the number of blocks by one
and changes the coefficient to $(-1)^{|\sigma|}|\sigma|!$.
This is precisely $-|\sigma|$ times the coefficient of $\sigma$.
Summing over $\sigma\in\mathcal P_r$ therefore gives the M\"obius expansion
of $\varkappa(a_1,\dots,a_r,k)$ and proves the first identity.

For the second identity, start from $\varkappa(a)=\hrow a$ and apply
the first with $k=\mu_r,\dots,\mu_1$ in turn.
Symmetry gives the last equality in
\[
\prod_i(q^{\mu_i}-1)\cdot D_{\mu_1}\cdots D_{\mu_r}\hrow a
=\varkappa(a,\mu_r,\dots,\mu_1)=\varkappa(\mu_1,\dots,\mu_r,a).
\]
Each factor satisfies $q^{\mu_i}-1=(q-1)[\mu_i]_q$,
while $\varkappa(\mu_1,\dots,\mu_r,a)=(q-1)^r\kappa(\mu_1,\dots,\mu_r,a)$ by Definition~\ref{def:cumulant}.
Dividing by $(q-1)^r$ therefore gives the second identity.
\end{proof}

Theorem~\ref{thm:LLTseeded} identifies the right-hand side of the
second identity in Theorem~\ref{thm:Karow} with $\Th_{\mathsf p_\mu}\hrow a|_{t=1}=\mathsf K^{(a)}_\mu$
and hence gives \eqref{eq:introKarow}.

\begin{cor}\label{cor:dolega}
For all row lengths $a_1,\dots,a_r\ge1$, the single-row Macdonald cumulant
$\kappa(a_1,\dots,a_r)$ is a $\mathbb Z_{\ge0}[q]$-linear combination of
vertical-strip LLT polynomials and hence Schur positive.
\end{cor}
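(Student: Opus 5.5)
The plan is to combine Theorem~\ref{thm:Karow} with Theorem~\ref{thm:LLTseeded}, using the symmetry of $\kappa$ to pick a convenient distinguished entry. Since $\kappa(a_1,\dots,a_r)$ is symmetric in its arguments, reorder them so that $a\coloneqq a_r$ is distinguished and the remaining entries $a_1,\dots,a_{r-1}$ are sorted into a partition $\mu=(\mu_1,\dots,\mu_{r-1})$. For $r=1$ we have $\kappa(a)=\hrow a$, which equals $\LLT(T)$ for the $a$-tree whose components are single leaves, by \eqref{eq:rowcoinv}. So the substantive case is $r\ge2$.

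By the second identity of Theorem~\ref{thm:Karow},
\[
\kappa(\mu_1,\dots,\mu_{r-1},a)=\prod_{i}[\mu_i]_q\cdot D_{\mu_1}\cdots D_{\mu_{r-1}}\hrow a .
\]
By \eqref{eq:seeded} in the proof of Theorem~\ref{thm:LLTseeded}, the composite $D_{\mu_1}\cdots D_{\mu_{r-1}}\hrow a$ equals $\mathsf K^{(a)}_\mu$. That theorem expresses $\mathsf K^{(a)}_\mu$ as the double sum, over $\mathcal B\in\OP_{r-1}$ and over plane trees $T$ with $\bfsdeg(T)=(a,\mu(B_1),\dots,\mu(B_s))$, of $\LLT(T)$. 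Each coefficient there is a nonnegative integer.

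Multiplying by $\prod_i[\mu_i]_q\in\mathbb Z_{\ge0}[q]$ keeps all coefficients in $\mathbb Z_{\ge0}[q]$. This gives the asserted expansion of $\kappa(a_1,\dots,a_r)$ as a $\mathbb Z_{\ge0}[q]$-linear combination of vertical-strip LLT polynomials. Schur positivity then follows because each $\LLT(T)$ is Schur positive by \cite[Prop.~5.3.1]{HHLRU}, via Remark~\ref{rem:normalization}.

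No real obstacle remains: the only care needed is confirming that the symmetry of $\kappa$ lets us move any entry into the distinguished slot $a$.
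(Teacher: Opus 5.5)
Your proposal is correct and is essentially the paper's proof: sort $a_1,\dots,a_{r-1}$ into a partition $\mu$ and set $a=a_r$. The second identity of Theorem~\ref{thm:Karow} then writes $\kappa(a_1,\dots,a_r)$ as $\prod_i[\mu_i]_q$ times $D_{\mu_1}\cdots D_{\mu_{r-1}}\hrow a=\mathsf K^{(a)}_\mu$, and Theorem~\ref{thm:LLTseeded} expands $\mathsf K^{(a)}_\mu$ as a nonnegative integral sum of $\LLT(T)$. The paper also notes explicitly that the cumulant is $t$-free, so its Schur coefficients lie in $\mathbb Z_{\ge0}[q]\subset\mathbb Z_{\ge0}[q,t]$ as the conjecture requires; your argument yields this implicitly.
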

\begin{proof}
Sort $a_1,\dots,a_{r-1}$ into a partition $\mu$ and take $a=a_r$ in the
second identity in Theorem~\ref{thm:Karow}. By Theorem~\ref{thm:LLTseeded} the cumulant is then
$\prod_{i<r}[a_i]_q\in\mathbb Z_{\ge0}[q]$
times a nonnegative integral combination of vertical-strip LLT polynomials,
which are Schur positive.
Since the cumulant is independent of $t$, its Schur coefficients also lie
in $\mathbb Z_{\ge0}[q,t]$
as required by \cite[Conj.~1.2]{DolG}.
\end{proof}

In the terminology of \cite{DKLLT} the function $\kappa(a_1,\dots,a_r)$ is
the LLT cumulant of $a_1+\cdots+a_r$ single boxes of equal content with color
multiplicities $a_1,\dots,a_r$.
The cospin and shift corrections vanish because all cells have equal content,
so the three normalizations in \cite{DKLLT} agree.
Theorems~\ref{thm:LLTseeded} and~\ref{thm:Karow} give the expansion in LLT
polynomials asked for at the end of \cite{DKLLT} for arbitrary
color multiplicities.

\begin{rem}\label{rem:DK38}
The LLT graph of $n$ single boxes of equal content is the complete graph
$K_n$. The Schur positivity in Corollary~\ref{cor:dolega} would therefore
also follow from \cite[Thm.~38]{DKLLT} because $K_n$ is a melting lollipop.
That theorem gives a formula for the Schur coefficients of the LLT cumulant
of a melting lollipop with an arbitrary coloring. This formula does not hold
in general: the coefficient of $\mathsf s_{22}$ in $\kappa(1,1,1,1)$ is $q+3$ whereas the formula gives $q+2$.
The proof in \cite{DKLLT} applies the Schur expansion of Huh, Nam and Yoo
\cite{HNY20} to the disjoint union of the restrictions of $K_n$ to the
blocks of a set partition with the vertex labels inherited from $K_n$.
If some block is not an interval then there are $i<j<k$ with $i\sim k$ and
$i\not\sim j$. The labeled disjoint union is then not a unit interval graph and the
expansion of products stated in \cite[Thm.~38]{DKLLT} can fail. For the set
partition $\{\{1,4\},\{2,3\}\}$ of the vertices of $K_4$ the coefficient of
$\mathsf s_{22}$ in $\hrow2\hrow2$ is $q^2+1$ whereas the expansion gives $2q$.
This set partition contributes to $\kappa(1,1,1,1)$. When all blocks are
intervals the argument of \cite{DKLLT} applies. This covers the cumulants
$\kappa(a_1,a_2)$ since their two color classes can be taken to be intervals.
For distinct colors Kowalski gives an LLT-positive expansion and hence Schur
positivity \cite{Kow20}, \cite[Thm.~40]{DKLLT}, \cite[Thm.~1.2]{Kow23}.
\end{rem}


\bibliographystyle{hplain}
\bibliography{main}

@article{CM,
    AUTHOR = {Carlsson, Erik and Mellit, Anton},
     TITLE = {A proof of the shuffle conjecture},
   JOURNAL = {J. Amer. Math. Soc.},
  FJOURNAL = {Journal of the American Mathematical Society},
    VOLUME = {31},
      YEAR = {2018},
    NUMBER = {3},
     PAGES = {661--697},
      ISSN = {0894-0347,1088-6834},
   MRCLASS = {05E10 (05E05 33D52)},
  MRNUMBER = {3787405},
MRREVIEWER = {Tanja\ Stojadinovi\'{c}},
       DOI = {10.1090/jams/893},
       URL = {https://doi.org/10.1090/jams/893},
}

@article{DIV,
    AUTHOR = {D'Adderio, Michele and Iraci, Alessandro and Vanden Wyngaerd,
              Anna},
     TITLE = {Theta operators, refined delta conjectures, and coinvariants},
   JOURNAL = {Adv. Math.},
  FJOURNAL = {Advances in Mathematics},
    VOLUME = {376},
      YEAR = {2021},
     PAGES = {Paper No. 107447, 59},
      ISSN = {0001-8708,1090-2082},
   MRCLASS = {05E05},
  MRNUMBER = {4178919},
MRREVIEWER = {Elizabeth\ M.\ Niese},
       DOI = {10.1016/j.aim.2020.107447},
       URL = {https://doi.org/10.1016/j.aim.2020.107447},
}

@article{DR,
    AUTHOR = {D'Adderio, Michele and Romero, Marino},
     TITLE = {New identities for theta operators},
   JOURNAL = {Trans. Amer. Math. Soc.},
  FJOURNAL = {Transactions of the American Mathematical Society},
    VOLUME = {376},
      YEAR = {2023},
    NUMBER = {8},
     PAGES = {5775--5807},
      ISSN = {0002-9947,1088-6850},
   MRCLASS = {05E05 (20C30)},
  MRNUMBER = {4630759},
MRREVIEWER = {Tanja\ Stojadinovi\'{c}},
       DOI = {10.1090/tran/8911},
       URL = {https://doi.org/10.1090/tran/8911},
}

@article{IR,
    AUTHOR = {Iraci, Alessandro and Romero, Marino},
     TITLE = {Delta and theta operator expansions},
   JOURNAL = {Forum Math. Sigma},
  FJOURNAL = {Forum of Mathematics. Sigma},
    VOLUME = {12},
      YEAR = {2024},
     PAGES = {Paper No. e30, 37},
      ISSN = {2050-5094},
   MRCLASS = {05E05 (05E10)},
  MRNUMBER = {4713626},
MRREVIEWER = {Eric\ S.\ Egge},
       DOI = {10.1017/fms.2024.14},
       URL = {https://doi.org/10.1017/fms.2024.14},
}

@article{DILRVW,
    AUTHOR = {D'Adderio, Michele and Iraci, Alessandro and Le Borgne, Yvan
              and Romero, Marino and Vanden Wyngaerd, Anna},
     TITLE = {Tiered trees and {T}heta operators},
   JOURNAL = {Int. Math. Res. Not. IMRN},
  FJOURNAL = {International Mathematics Research Notices. IMRN},
      YEAR = {2023},
    NUMBER = {24},
     PAGES = {20748--20783},
      ISSN = {1073-7928,1687-0247},
   MRCLASS = {05E05 (05C31)},
  MRNUMBER = {4681271},
MRREVIEWER = {Timothy\ Y.\ Chow},
       DOI = {10.1093/imrn/rnac258},
       URL = {https://doi.org/10.1093/imrn/rnac258},
}

@article{BGHT,
    AUTHOR = {Bergeron, F. and Garsia, A. M. and Haiman, M. and Tesler, G.},
     TITLE = {Identities and positivity conjectures for some remarkable
              operators in the theory of symmetric functions},
      NOTE = {Dedicated to Richard A. Askey on the occasion of his 65th
              birthday, Part III},
   JOURNAL = {Methods Appl. Anal.},
  FJOURNAL = {Methods and Applications of Analysis},
    VOLUME = {6},
      YEAR = {1999},
    NUMBER = {3},
     PAGES = {363--420},
      ISSN = {1073-2772,1945-0001},
   MRCLASS = {05E05 (33D52)},
  MRNUMBER = {1803316},
MRREVIEWER = {Ang\`ele\ M.\ Hamel},
       DOI = {10.4310/MAA.1999.v6.n3.a7},
       URL = {https://doi.org/10.4310/MAA.1999.v6.n3.a7},
}

@article{HHLRU,
    AUTHOR = {Haglund, J. and Haiman, M. and Loehr, N. and Remmel, J. B. and
              Ulyanov, A.},
     TITLE = {A combinatorial formula for the character of the diagonal
              coinvariants},
   JOURNAL = {Duke Math. J.},
  FJOURNAL = {Duke Mathematical Journal},
    VOLUME = {126},
      YEAR = {2005},
    NUMBER = {2},
     PAGES = {195--232},
      ISSN = {0012-7094,1547-7398},
   MRCLASS = {05E10 (05A30 20C30)},
  MRNUMBER = {2115257},
MRREVIEWER = {Edward\ E.\ Allen},
       DOI = {10.1215/S0012-7094-04-12621-1},
       URL = {https://doi.org/10.1215/S0012-7094-04-12621-1},
}

@book{Hag08,
    AUTHOR = {Haglund, James},
     TITLE = {The {$q$},{$t$}-{C}atalan numbers and the space of diagonal
              harmonics},
    SERIES = {University Lecture Series},
    VOLUME = {41},
      NOTE = {With an appendix on the combinatorics of Macdonald
              polynomials},
 PUBLISHER = {American Mathematical Society, Providence, RI},
      YEAR = {2008},
     PAGES = {viii+167},
      ISBN = {978-0-8218-4411-3; 0-8218-4411-3},
   MRCLASS = {05E05 (05A05 05A30 33D52)},
  MRNUMBER = {2371044},
MRREVIEWER = {Michael\ A.\ Zabrocki},
}

@book{StanEC1,
    AUTHOR = {Stanley, Richard P.},
     TITLE = {Enumerative combinatorics. {V}olume 1},
    SERIES = {Cambridge Studies in Advanced Mathematics},
    VOLUME = {49},
   EDITION = {Second},
 PUBLISHER = {Cambridge University Press, Cambridge},
      YEAR = {2012},
     PAGES = {xiv+626},
      ISBN = {978-1-107-60262-5},
   MRCLASS = {05-02 (05A15 06-02)},
  MRNUMBER = {2868112},
}

@book{StanEC2,
    AUTHOR = {Stanley, Richard P.},
     TITLE = {Enumerative combinatorics. {V}olume 2},
    SERIES = {Cambridge Studies in Advanced Mathematics},
    VOLUME = {62},
      NOTE = {With a foreword by Gian-Carlo Rota and appendix 1 by Sergey
              Fomin},
 PUBLISHER = {Cambridge University Press, Cambridge},
      YEAR = {1999},
     PAGES = {xii+581},
      ISBN = {0-521-56069-1; 0-521-78987-7},
   MRCLASS = {05-02 (05A15 05E05 06-02)},
  MRNUMBER = {1676282},
}

@article{GT,
    AUTHOR = {Garsia, A. M. and Tesler, G.},
     TITLE = {Plethystic formulas for {M}acdonald {$q,t$}-{K}ostka
              coefficients},
   JOURNAL = {Adv. Math.},
  FJOURNAL = {Advances in Mathematics},
    VOLUME = {123},
      YEAR = {1996},
    NUMBER = {2},
     PAGES = {144--222},
      ISSN = {0001-8708,1090-2082},
   MRCLASS = {05E05 (33C80 33D80)},
  MRNUMBER = {1420484},
MRREVIEWER = {Mark\ D.\ Haiman},
       DOI = {10.1006/aima.1996.0071},
       URL = {https://doi.org/10.1006/aima.1996.0071},
}

@article{Knop,
    AUTHOR = {Knop, Friedrich},
     TITLE = {Integrality of two variable {K}ostka functions},
   JOURNAL = {J. Reine Angew. Math.},
  FJOURNAL = {Journal f\"{u}r die Reine und Angewandte Mathematik. [Crelle's
              Journal]},
    VOLUME = {482},
      YEAR = {1997},
     PAGES = {177--189},
      ISSN = {0075-4102,1435-5345},
   MRCLASS = {05E05 (33C80 33D80)},
  MRNUMBER = {1427661},
MRREVIEWER = {Mark\ D.\ Haiman},
       DOI = {10.1515/crll.1997.482.177},
       URL = {https://doi.org/10.1515/crll.1997.482.177},
}

@article{Sahi,
    AUTHOR = {Sahi, Siddhartha},
     TITLE = {Interpolation, integrality, and a generalization of
              {M}acdonald's polynomials},
   JOURNAL = {Internat. Math. Res. Notices},
  FJOURNAL = {International Mathematics Research Notices},
      YEAR = {1996},
    NUMBER = {10},
     PAGES = {457--471},
      ISSN = {1073-7928,1687-0247},
   MRCLASS = {05E05 (33C80 33D80)},
  MRNUMBER = {1399411},
MRREVIEWER = {Mark\ D.\ Haiman},
       DOI = {10.1155/S107379289600030X},
       URL = {https://doi.org/10.1155/S107379289600030X},
}

@incollection{KN,
    AUTHOR = {Kirillov, A. N. and Noumi, M.},
     TITLE = {{$q$}-difference raising operators for {M}acdonald polynomials
              and the integrality of transition coefficients},
 BOOKTITLE = {Algebraic methods and {$q$}-special functions ({M}ontr\'{e}al,
              {QC}, 1996)},
    SERIES = {CRM Proc. Lecture Notes},
    VOLUME = {22},
     PAGES = {227--243},
 PUBLISHER = {Amer. Math. Soc., Providence, RI},
      YEAR = {1999},
      ISBN = {0-8218-2026-5},
   MRCLASS = {39A13 (05E05 33D52)},
  MRNUMBER = {1726838},
MRREVIEWER = {Laurent\ Habsieger},
       DOI = {10.1090/crmp/022/13},
       URL = {https://doi.org/10.1090/crmp/022/13},
}

@misc{Kow20,
      AUTHOR = {Kowalski, Maciej},
       TITLE = {{LLT} cumulants of unicellular {Y}oung diagrams, parking
                functions and {S}chur positivity},
        YEAR = {2020},
      EPRINT = {arXiv:2011.15080},
}

@article{GHT,
    AUTHOR = {Garsia, A. M. and Haiman, M. and Tesler, G.},
     TITLE = {Explicit plethystic formulas for {M}acdonald {$q,t$}-{K}ostka
              coefficients},
      NOTE = {The Andrews Festschrift (Maratea, 1998)},
   JOURNAL = {S\'{e}m. Lothar. Combin.},
  FJOURNAL = {S\'{e}minaire Lotharingien de Combinatoire},
    VOLUME = {42},
      YEAR = {1999},
     PAGES = {Art. B42m, 45},
      ISSN = {1286-4889},
   MRCLASS = {05E05 (33D52)},
  MRNUMBER = {1701592},
MRREVIEWER = {Yasmine\ B.\ Sanderson},
}

@article{Dol17,
    AUTHOR = {Do{\l}{\polhk e}ga, Maciej},
     TITLE = {Strong factorization property of {M}acdonald polynomials and
              higher-order {M}acdonald's positivity conjecture},
   JOURNAL = {J. Algebraic Combin.},
  FJOURNAL = {Journal of Algebraic Combinatorics. An International Journal},
    VOLUME = {46},
      YEAR = {2017},
    NUMBER = {1},
     PAGES = {135--163},
      ISSN = {0925-9899,1572-9192},
   MRCLASS = {05E05},
  MRNUMBER = {3666415},
MRREVIEWER = {Frank\ Sottile},
       DOI = {10.1007/s10801-017-0750-x},
       URL = {https://doi.org/10.1007/s10801-017-0750-x},
}

@article{DKLLT,
    AUTHOR = {Do{\l}{\polhk e}ga, Maciej and Kowalski, Maciej},
     TITLE = {L{LT} cumulants and graph coloring},
   JOURNAL = {Electron. J. Combin.},
  FJOURNAL = {Electronic Journal of Combinatorics},
    VOLUME = {29},
      YEAR = {2022},
    NUMBER = {4},
     PAGES = {Paper No. 4.5, 35},
      ISSN = {1077-8926},
   MRCLASS = {05E05 (05C15)},
  MRNUMBER = {4497217},
MRREVIEWER = {Domenico\ Senato},
       DOI = {10.37236/10977},
       URL = {https://doi.org/10.37236/10977},
}

@article{DolG,
    AUTHOR = {Do{\l}{\polhk e}ga, Maciej},
     TITLE = {Macdonald cumulants, {$G$}-inversion polynomials and
              {$G$}-parking functions},
   JOURNAL = {European J. Combin.},
  FJOURNAL = {European Journal of Combinatorics},
    VOLUME = {75},
      YEAR = {2019},
     PAGES = {172--194},
      ISSN = {0195-6698,1095-9971},
   MRCLASS = {05E05},
  MRNUMBER = {3862962},
MRREVIEWER = {Emmanuel\ Jean\ Briand},
       DOI = {10.1016/j.ejc.2018.08.011},
       URL = {https://doi.org/10.1016/j.ejc.2018.08.011},
}

@misc{Kow23,
      AUTHOR = {Kowalski, Maciej},
       TITLE = {A combinatorial formula for {LLT} cumulants of melting
                lollipops in terms of spanning trees},
        YEAR = {2023},
      EPRINT = {arXiv:2301.08933},
}

@misc{DIIP,
      AUTHOR = {D'Adderio, Michele and Interdonato, Giovanni and Iraci,
                Alessandro and Pagaria, Roberto},
       TITLE = {Leaving the {H}all: explicit formulas for {N}egu\c{t}
                operators},
        NOTE = {Preprint},
        YEAR = {2026},
      EPRINT = {arXiv:2608.14836v2},
}

@article{LLT,
    AUTHOR = {Lascoux, Alain and Leclerc, Bernard and Thibon, Jean-Yves},
     TITLE = {Ribbon tableaux, {H}all-{L}ittlewood functions, quantum affine
              algebras, and unipotent varieties},
   JOURNAL = {J. Math. Phys.},
  FJOURNAL = {Journal of Mathematical Physics},
    VOLUME = {38},
      YEAR = {1997},
    NUMBER = {2},
     PAGES = {1041--1068},
      ISSN = {0022-2488,1089-7658},
   MRCLASS = {05E10 (05E05 33D80 81R10)},
  MRNUMBER = {1434225},
MRREVIEWER = {Thomas\ Scharf},
       DOI = {10.1063/1.531807},
       URL = {https://doi.org/10.1063/1.531807},
}

@incollection{LT,
    AUTHOR = {Leclerc, Bernard and Thibon, Jean-Yves},
     TITLE = {Littlewood-{R}ichardson coefficients and {K}azhdan-{L}usztig
              polynomials},
 BOOKTITLE = {Combinatorial methods in representation theory ({K}yoto,
              1998)},
    SERIES = {Adv. Stud. Pure Math.},
    VOLUME = {28},
     PAGES = {155--220},
 PUBLISHER = {Kinokuniya, Tokyo},
      YEAR = {2000},
      ISBN = {4-314-10141-5},
   MRCLASS = {20C08 (33D80)},
  MRNUMBER = {1864481},
MRREVIEWER = {Andrew\ Mathas},
       DOI = {10.2969/aspm/02810155},
       URL = {https://doi.org/10.2969/aspm/02810155},
}

@article{KT,
    AUTHOR = {Kashiwara, Masaki and Tanisaki, Toshiyuki},
     TITLE = {Parabolic {K}azhdan-{L}usztig polynomials and {S}chubert
              varieties},
   JOURNAL = {J. Algebra},
  FJOURNAL = {Journal of Algebra},
    VOLUME = {249},
      YEAR = {2002},
    NUMBER = {2},
     PAGES = {306--325},
      ISSN = {0021-8693,1090-266X},
   MRCLASS = {14M15 (14F43)},
  MRNUMBER = {1901161},
       DOI = {10.1006/jabr.2000.8690},
       URL = {https://doi.org/10.1006/jabr.2000.8690},
}

@article{Mac88,
    AUTHOR = {Macdonald, I. G.},
     TITLE = {A new class of symmetric functions},
   JOURNAL = {S\'em. Lothar. Combin.},
  FJOURNAL = {S\'eminaire Lotharingien de Combinatoire},
    VOLUME = {20},
      YEAR = {1988},
     PAGES = {Art. B20a, 41 pp.},
      NOTE = {Publ. I.R.M.A. Strasbourg, 372/S--20},
}

@book{Mac,
    AUTHOR = {Macdonald, I. G.},
     TITLE = {Symmetric functions and {H}all polynomials},
    SERIES = {Oxford Mathematical Monographs},
   EDITION = {Second},
      NOTE = {With contributions by A. Zelevinsky,
              Oxford Science Publications},
 PUBLISHER = {The Clarendon Press, Oxford University Press, New York},
      YEAR = {1995},
     PAGES = {x+475},
      ISBN = {0-19-853489-2},
   MRCLASS = {05E05 (05-02 20C30 20C33 20K01 33C80 33D80)},
  MRNUMBER = {1354144},
MRREVIEWER = {John\ R.\ Stembridge},
}

@article{HRW,
    AUTHOR = {Haglund, J. and Remmel, J. B. and Wilson, A. T.},
     TITLE = {The delta conjecture},
   JOURNAL = {Trans. Amer. Math. Soc.},
  FJOURNAL = {Transactions of the American Mathematical Society},
    VOLUME = {370},
      YEAR = {2018},
    NUMBER = {6},
     PAGES = {4029--4057},
      ISSN = {0002-9947,1088-6850},
   MRCLASS = {05E05},
  MRNUMBER = {3811519},
MRREVIEWER = {David\ J.\ Grabiner},
       DOI = {10.1090/tran/7096},
       URL = {https://doi.org/10.1090/tran/7096},
}

@article{DM,
    AUTHOR = {D'Adderio, Michele and Mellit, Anton},
     TITLE = {A proof of the compositional delta conjecture},
   JOURNAL = {Adv. Math.},
  FJOURNAL = {Advances in Mathematics},
    VOLUME = {402},
      YEAR = {2022},
     PAGES = {Paper No. 108342, 17},
      ISSN = {0001-8708,1090-2082},
   MRCLASS = {05E05},
  MRNUMBER = {4401822},
MRREVIEWER = {Sam\ Hopkins},
       DOI = {10.1016/j.aim.2022.108342},
       URL = {https://doi.org/10.1016/j.aim.2022.108342},
}

@article{HaimanDH,
    AUTHOR = {Haiman, Mark},
     TITLE = {Vanishing theorems and character formulas for the {H}ilbert
              scheme of points in the plane},
   JOURNAL = {Invent. Math.},
  FJOURNAL = {Inventiones Mathematicae},
    VOLUME = {149},
      YEAR = {2002},
    NUMBER = {2},
     PAGES = {371--407},
      ISSN = {0020-9910,1432-1297},
   MRCLASS = {14C05 (05E05 14F17 14R20)},
  MRNUMBER = {1918676},
MRREVIEWER = {Claudio\ Procesi},
       DOI = {10.1007/s002220200219},
       URL = {https://doi.org/10.1007/s002220200219},
}

@article{HMZ,
    AUTHOR = {Haglund, J. and Morse, J. and Zabrocki, M.},
     TITLE = {A compositional shuffle conjecture specifying touch points of
              the {D}yck path},
   JOURNAL = {Canad. J. Math.},
  FJOURNAL = {Canadian Journal of Mathematics. Journal Canadien de
              Math\'{e}matiques},
    VOLUME = {64},
      YEAR = {2012},
    NUMBER = {4},
     PAGES = {822--844},
      ISSN = {0008-414X,1496-4279},
   MRCLASS = {05E05 (33D52)},
  MRNUMBER = {2957232},
MRREVIEWER = {Arthur\ L. B. Yang},
       DOI = {10.4153/CJM-2011-078-4},
       URL = {https://doi.org/10.4153/CJM-2011-078-4},
}

@article{BHMPS,
    AUTHOR = {Blasiak, Jonah and Haiman, Mark and Morse, Jennifer and Pun,
              Anna and Seelinger, George H.},
     TITLE = {A proof of the extended delta conjecture},
   JOURNAL = {Forum Math. Pi},
  FJOURNAL = {Forum of Mathematics. Pi},
    VOLUME = {11},
      YEAR = {2023},
     PAGES = {Paper No. e6, 28},
      ISSN = {2050-5086},
   MRCLASS = {05A19 (20G05 35P15)},
  MRNUMBER = {4553915},
MRREVIEWER = {Eric\ S.\ Egge},
       DOI = {10.1017/fmp.2023.3},
       URL = {https://doi.org/10.1017/fmp.2023.3},
}

@article{Hai01,
    AUTHOR = {Haiman, Mark},
     TITLE = {{H}ilbert schemes, polygraphs and the {M}acdonald positivity
              conjecture},
   JOURNAL = {J. Amer. Math. Soc.},
  FJOURNAL = {Journal of the American Mathematical Society},
    VOLUME = {14},
      YEAR = {2001},
    NUMBER = {4},
     PAGES = {941--1006},
}

@article{DF17,
    AUTHOR = {Do{\l}{\polhk e}ga, Maciej and F\'{e}ray, Valentin},
     TITLE = {Cumulants of {J}ack symmetric functions and the
              {$b$}-conjecture},
   JOURNAL = {Trans. Amer. Math. Soc.},
  FJOURNAL = {Transactions of the American Mathematical Society},
    VOLUME = {369},
      YEAR = {2017},
    NUMBER = {12},
     PAGES = {9015--9039},
}

@article{GJ96,
    AUTHOR = {Goulden, I. P. and Jackson, D. M.},
     TITLE = {Connection coefficients, matchings, maps and combinatorial
              conjectures for {J}ack symmetric functions},
   JOURNAL = {Trans. Amer. Math. Soc.},
  FJOURNAL = {Transactions of the American Mathematical Society},
    VOLUME = {348},
      YEAR = {1996},
    NUMBER = {3},
     PAGES = {873--892},
}

@article{HNY20,
    AUTHOR = {Huh, JiSun and Nam, Sun-Young and Yoo, Meesue},
     TITLE = {Melting lollipop chromatic quasisymmetric functions and {S}chur
              expansion of unicellular {LLT} polynomials},
   JOURNAL = {Discrete Math.},
    VOLUME = {343},
      YEAR = {2020},
    NUMBER = {3},
     PAGES = {111728},
       DOI = {10.1016/j.disc.2019.111728},
}

\end{document}